\newcommand{\sr}{{\tfrac12}}
\newcommand{\stwo}{{\tfrac{s}2}}
\newcommand{\srn}{{\tfrac12}}
\newcommand{\ve}{\mathfrak{u}}
\newcommand{\R}{\mathbb{R}}
\newcommand{\F}{\mathcal{F}}
\newcommand{\Y}{\mathpzc{Y}}
\newcommand{\Fm}{\mathpzc{F}}
\newcommand{\N}{\mathpzc{N}}
\newcommand{\vero}{\texttt{v}}
\newcommand{\C}{\mathcal{C}}
\newcommand{\D}{\mathcal{D}}
\newcommand{\V}{\mathbb{V}}
\newcommand{\T}{\mathscr{T}}
\newcommand{\Q}{\mathbb{Q}}
\newcommand{\Ss}{\mathscr{S}}
\newcommand{\Tm}{\mathcal{T}}
\newcommand{\HL}{ \mbox{ \raisebox{7.4pt} {\tiny$\circ$} \kern-10.3pt} {H_L^1} }
\newcommand{\Wp}{ \mbox{ \raisebox{7.7pt} {\scriptsize$\circ$} \kern-10.1pt} {W^{1,p}} }
\newcommand{\Wpp}{ \mbox{ \raisebox{7.7pt} {\scriptsize$\circ$} \kern-10.1pt} {W^{1,p'}} }
\newcommand{\Sz}{ \mbox{ \raisebox{7.5pt} {\scriptsize$\circ$} \kern-10.1pt} {\Ss} }
\newcommand{\HLnew}{ \mbox{ \raisebox{7pt} {\scriptsize$\circ$} \kern-10.1pt}{H}^1_L }
\newcommand{\HLn}{{\mbox{\,\raisebox{5.1pt} {\tiny$\circ$} \kern-9.1pt}{H}^1_L  }}
\newcommand{\HLs}{{\mbox{\raisebox{8.7pt} {\scriptsize$\circ$} \kern-10.1pt}{H}^1_L  }}
\newtheorem{remark}[theorem]{Remark}
\newcommand{\verot}{\emph{\texttt{v}}}
\newcommand{\Tr}{\mathbb{T}}
\newcommand{\U}{\mathbb{U}}
\newcommand{\A}{\mathcal{A}}
\newcommand{\B}{\mathcal{B}}
\newcommand{\xa}{\xi_{\alpha}}
\DeclareMathOperator*{\tr}{tr_\Omega}
\DeclareMathOperator{\dom}{Dom}
\DeclareMathOperator*{\diam}{diam}
\newcommand{\Laps}{(-\Delta)^s}
\newcommand{\DIV}{\textrm{div}}
\newcommand{\diff}{\, \mbox{\rm d}}
\newcommand{\ie}{i.e.,\@\xspace}
\newcommand{\cf}{cf.\@\xspace}
\newcommand{\Hs}{\mathbb{H}^s(\Omega)}
\newcommand{\Ws}{\mathbb{H}^{1-s}(\Omega)}
\newcommand{\GL}{{\textup{\textsf{GL}}}}
\DeclareMathAlphabet{\mathpzc}{OT1}{pzc}{m}{it}
\title{A PDE approach to fractional diffusion in general domains:
a priori error analysis\thanks{This work is supported by NSF grants: DMS-1109325 and DMS-0807811.
AJS is also supported by NSF grant DMS-1008058 and an AMS-Simons Grant.
EO is supported by the Conicyt-Fulbright Fellowship Beca Igualdad de Oportunidades.}}
\author{Ricardo H.~Nochetto\thanks{Department of Mathematics and Institute for Physical
Science and Technology, University of Maryland, College Park, MD 20742, USA. \texttt{rhn@math.umd.edu}},
\and
Enrique Ot\'arola\thanks{Department of Mathematics, University of Maryland,
College Park, MD 20742, USA. \texttt{kike@math.umd.edu}},
\and
Abner J.~Salgado\thanks{Department of Mathematics, University of Maryland,
College Park, MD 20742, USA. \texttt{abnersg@math.umd.edu}}}
\begin{document}

\maketitle

\begin{abstract}
The purpose of this work is the study of solution techniques for problems involving fractional 
powers of symmetric coercive elliptic operators in a bounded domain 
with Dirichlet boundary conditions.
These operators can be realized as the Dirichlet to Neumann map for
a degenerate/singular elliptic problem posed on a semi-infinite cylinder,
which we analyze in the framework of weighted Sobolev spaces.
Motivated by the rapid decay of the solution of this problem, we propose a truncation that
is suitable for numerical approximation. We discretize this truncation using
first degree tensor product finite elements.
We derive a priori error estimates in weighted Sobolev spaces.
The estimates exhibit optimal regularity
but suboptimal order for quasi-uniform meshes. 
For anisotropic meshes, instead, they are
quasi-optimal in both order and regularity.
We present numerical experiments to illustrate the method's performance.
\end{abstract}

\begin{keywords}
Fractional diffusion;
finite elements;
nonlocal operators;
degenerate and singular equations;
second order elliptic operators;
anisotropic elements.
\end{keywords}

\begin{AMS}
35S15;   
65R20;   
65N12;   
65N30.   
\end{AMS}


\section{Introduction}
\label{sec:Intro}
Singular integrals and nonlocal operators have been an active area of research in
different branches of mathematics such as operator theory and harmonic analysis (see
\cite{Stein}). In addition, they have received significant attention because  of their strong
connection with real-world problems, since they constitute a fundamental part of the
modeling and simulation of complex phenomena that span vastly different length scales.

Nonlocal operators arise in a number of applications such as: boundary control problems
\cite{Duvaut}, finance \cite{Carr.Geman.ea2002}, electromagnetic fluids \cite{McCN:81},
image processing \cite{GO:08}, materials science \cite{Bates}, optimization \cite{Duvaut},
porous media flow \cite{CG:93}, turbulence \cite{Bakunin}, peridynamics
\cite{Silling}, nonlocal continuum field theories \cite{Eringen} and others.
Therefore the domain of definition $\Omega$ could be rather general.

To make matters precise, in this work we shall be interested in fractional powers
of the Dirichlet Laplace operator $\Laps$, with $s \in (0,1)$, which for convenience we
will simply call the fractional Laplacian. In other words, we shall be concerned with
the following problem. Let $\Omega$ be an open and bounded subset of  $\R^n$ ($n\ge1$),
with boundary $\partial\Omega$. Given $s\in (0,1)$ and a smooth enough function $f$,
find $u$ such that
\begin{equation}
\label{fl=f_bdddom}
  \begin{dcases}
    \Laps u = f, & \text{in } \Omega, \\
    u = 0, & \text{on } \partial\Omega.
  \end{dcases}
\end{equation}
Our approach, however, is by no means particular to the fractional Laplacian.
In section~\ref{sec:general} we will discuss how, with little modification, our developments
can be applied to a general second order, symmetric and uniformly elliptic operator. 

The study of boundary value problems involving the fractional Laplacian is important
in physical applications where long range or anomalous diffusion is considered.
For instance, in the flow in porous media,
it is used when modeling the transport of particles that experience
very large transitions arising from high heterogeneity and very long spatial
autocorrelation (see \cite{Water:00}).
In the theory of stochastic processes, the fractional Laplacian
is the infinitesimal generator of a stable L\'evy process (see \cite{Bertoin}).

One of the main difficulties in the study of problem \eqref{fl=f_bdddom} is that the
fractional Laplacian is a nonlocal operator (see \cite{Landkof,CS:07,CS:11}).
To localize it, Caffarelli and Silvestre showed in \cite{CS:07} that any power
of the fractional Laplacian in $\R^n$ can be realized as an operator that maps a
Dirichlet boundary condition to a Neumann-type condition via an extension problem on
the upper half-space $\R^{n+1}_+$.
For a bounded domain $\Omega$, the result by Caffarelli and Silvestre has been adapted
in \cite{CDDS:11, BCdPS:12,ST:10}, thus obtaining an extension problem which is now posed
on the semi-infinite cylinder $\C = \Omega \times (0,\infty)$.
This extension is the following mixed boundary value problem:
\begin{equation}
\label{alpha_harm_intro}
\begin{dcases}
  \DIV\left( y^\alpha \nabla \ve \right) = 0, & \text{in } \C, \\
  \ve = 0, & \text{on } \partial_L \C, \\
  \frac{ \partial \ve }{\partial \nu^\alpha} = d_s f, & \text{on } \Omega \times \{0\}, \\
\end{dcases}
\end{equation}
where $\partial_L \C= \partial \Omega \times [0,\infty)$ denotes
the lateral boundary of $\C$, and
\begin{equation}
\label{def:lf}
\frac{\partial \ve}{\partial \nu^\alpha} = -\lim_{y \rightarrow 0^+} y^\alpha \ve_y,
\end{equation}
is the the so-called conormal exterior derivative of $\ve$ with $\nu$ being the unit outer normal to
$\C$ at $\Omega \times \{ 0 \}$. The parameter $\alpha$ is defined as
\begin{equation}
\label{eq:defofalpha}
  \alpha = 1-2s \in (-1,1).
\end{equation}
Finally, $d_s$ is a positive normalization constant
which depends only on $s$; see \cite{CS:07} for details. We will call $y$ the
\emph{extended variable} and the dimension $n+1$ in $\R_+^{n+1}$ the
\emph{extended dimension} of problem \eqref{alpha_harm_intro}.

The limit in \eqref{def:lf} must be understood in the distributional sense;
see \cite{BCdPS:12,CS:11,CS:07} or section~\ref{sec:Prelim} for more details.
As noted in \cite{CS:07,CDDS:11, ST:10}, the fractional Laplacian and the Dirichlet to
Neumann operator of problem \eqref{alpha_harm_intro} are related by
\[
  d_s \Laps u = \frac{\partial \ve}{\partial \nu^\alpha } \quad \text{in } \Omega.
\]

Using the aforementioned ideas, we propose the following strategy to find the solution of
\eqref{fl=f_bdddom}: given a sufficiently smooth function $f$ we solve
\eqref{alpha_harm_intro}, thus obtaining a function $\ve: (x',y) \in \C \mapsto \ve(x',y) \in \R$.
Setting $u: x' \in \Omega \mapsto u(x') = \ve(x',0) \in \R$, we obtain 
the solution of \eqref{fl=f_bdddom}.
The purpose of this work is then to make these ideas rigorous and to analyze
a discretization scheme, which consists of approximating the solution of
\eqref{alpha_harm_intro} via first degree tensor product finite elements.
We will show sub-optimal error estimates for quasi-uniform discretizations of \eqref{alpha_harm_intro}
in suitable weighted Sobolev spaces and quasi-optimal error estimates using anisotropic elements.

The main advantage of the proposed algorithm is that we solve
the local problem \eqref{alpha_harm_intro} instead
of dealing with the nonlocal operator $\Laps$ of problem \eqref{fl=f_bdddom}.
However, this
comes at the expense of incorporating one more dimension to the problem,
and raises questions about computational efficiency.
The development of efficient computational techniques for
the solution of problem \eqref{alpha_harm_intro} and issues such as multilevel methods,
a posteriori error analysis and adaptivity will be deferred to future reports.
In this paper we carry out a complete a priori error analysis of the discretization scheme.

Before proceeding with the analysis of our method, it is instructive to compare
it with those advocated in the literature. First of all,
for a general Lipschitz domain $\Omega \subset \R^n$ ($n > 1$),
we may think of solving problem \eqref{fl=f_bdddom} via a spectral decomposition of the
operator $-\Delta$. However,
to have a sufficiently good approximation, this requires the solution of
a large number of eigenvalue problems which, in general, is very time consuming.
In \cite{ILTA:05,ILTA:06} the authors studied computationally
problem \eqref{fl=f_bdddom} in
the one-dimensional case and with boundary conditions of Dirichlet, Neumann and
Robin type, and introduced the so-called matrix transference technique (MTT). 
Basically, MTT computes a spatial discretization of the fractional
Laplacian by first finding a matrix approximation, $A$,
of the Laplace operator (via finite differences or finite elements)
and then computing the $s$-th power of this matrix. This requires diagonalization of $A$
which, again, amounts to the solution of a large number of eigenvalue problems.
For the case $\Omega = (0,1)^2$ and $s\in(1/2,1)$, \cite{YTLI:11} applies the MTT
technique and avoids diagonalization of $A$ by writing a numerical scheme in terms of
the product of a function of the matrix and a vector, $f(A)b$, where $b$ is a suitable vector.
This product is then approximated by a preconditioned Lanczos method.
Under the same setting, the work \cite{BHK}, makes a computational comparison of
three techniques for the computation of $f(A)b$:
the contour integral method, extended Krylov subspace methods and the pre-assigned
poles and interpolation nodes method.

The outline of this paper is as follows. In \S~\ref{sec:Prelim} we introduce
the functional framework that is suitable for the study of
problems \eqref{fl=f_bdddom} and \eqref{alpha_harm_intro}.
We recall the definition of the fractional Laplacian
on a bounded domain via spectral theory and, in addition, in \S~\ref{subsub:regularity}
we study regularity of the solution to \eqref{alpha_harm_intro}.
The numerical analysis of \eqref{fl=f_bdddom}
begins in \S~\ref{sec:Truncation}. Here we introduce a truncation of problem
\eqref{alpha_harm_intro} and study some properties of its solution.
Having understood the truncation we proceed, in \S~\ref{sec:errors}, to study its finite
element approximation. We prove interpolation estimates in weighted Sobolev spaces, under mild
shape regularity assumptions that allow us to consider anisotropic elements in the
extended variable $y$.
Based on the regularity results of \S~\ref{subsub:regularity} we derive, in 
\S~\ref{sec:ErrorEstimate}, a priori error estimates for quasi-uniform
meshes which exhibit optimal regularity but suboptimal order. To
restore optimal decay, we resort to
the so-called principle of error equidistribution and construct
graded meshes in the extended variable $y$. They in turn capture the singular behavior of the solution 
to \eqref{alpha_harm_intro}
and allow us to prove a quasi-optimal rate of convergence
with respect to both regularity and degrees of freedom.
In \S~\ref{sec:numerics}, to illustrate the method's performance and
theory, we provide several numerical experiments. 
Finally, in \S~\ref{sec:general} we show that our developments
apply to general second order, symmetric and uniformly elliptic operators.

\section{Notation and preliminaries}
\label{sec:Prelim}
Throughout this work $\Omega$ is an open, bounded and connected subset
of $\R^n$, $n\geq1$, with Lipschitz boundary $\partial\Omega$,
unless specified otherwise. We define the semi-infinite cylinder
\begin{equation}
  \label{cilinder}
  \C = \Omega \times (0,\infty),
\end{equation}
and its lateral boundary
\begin{equation}
  \label{cilinderboundary}
  \partial_L \C  = \partial \Omega \times [0,\infty).
\end{equation}
Given $\Y>0$, we define the truncated cylinder
\begin{equation}
  \label{trunccylinder}
  \C_\Y = \Omega \times (0,\Y).
\end{equation}
The lateral boundary $\partial_L\C_\Y$ is defined accordingly.

Throughout our discussion we will be dealing with objects defined in $\R^{n+1}$ 
and it will be convenient to distinguish the extended dimension, as it plays a special r\^ole.
A vector $x\in \R^{n+1}$, will be denoted by
\[
  x =  (x^1,\ldots,x^n, x^{n+1}) = (x', x^{n+1}) = (x',y),
\]
with $x^i \in \R$ for $i=1,\ldots,{n+1}$, $x' \in \R^n$ and $y\in\R$.
The upper half-space in $\R^{n+1}$ will be denoted by
\[
  \R^{n+1}_+ = \left\{x=(x',y): x' \in \R^n\  y \in \R, \ y > 0 \right\}.
\]
Let $ \gamma = (\gamma^1,\gamma^2) \in \R^2$ and $z \in \R^{n+1}$, the
binary operation $ \odot : \R^2 \times \R^{n+1} \rightarrow \R^{n+1}$
is defined by 
\begin{equation}
\label{eq:defcolon}
  \gamma \odot z = (\gamma^1 z', \gamma^2 z^{n+1}) \in \R^{n+1}.
\end{equation}

The relation $a \lesssim b$ indicates that $a \leq Cb$, with a constant $C$ that does not
depend on neither $a$ nor $b$ but it might depend on $s$ and $\Omega$. The value of $C$ might change
at each occurrence. Given two objects $X$ and $Y$ in the same category, we write $X \hookrightarrow Y$
to indicate the existence of a monomorphism between them.
Generally, these will be objects in some subcategory of the topological vector spaces
(metric, normed, Banach, Hilbert spaces) and, in this case,
the monomorphism is nothing more than continuous embedding.
If $X$ is a vector space, we denote by $X'$ its dual.

\subsection{Fractional Sobolev spaces and the fractional Laplacian}
\label{sub:sub:fractional_spaces}
Let us recall some function spaces; for details the reader is referred to
\cite{Lions,McLean,NPV:12,Tartar}.
For $0<s<1$, we introduce the so-called Gagliardo-Slobodecki\u\i\ seminorm
\[
  |w|_{H^s(\Omega)}^2 =
      \int_{\Omega} \int_{\Omega} \frac{ |w(x_1')-w(x_2')|^2 }{ |x_1'-x_2'|^{n+2s} } \diff x_1' \diff x_2'.
\]
The Sobolev space $H^s(\Omega)$ of order $s$ is defined by
\begin{equation}
\label{def:H^sOmega}
  H^s(\Omega) = \left\{ w \in L^2(\Omega): | w |_{H^s(\Omega)} <  \infty \right \},
\end{equation}
which equipped with the norm
\[
  \| u\|_{H^s(\Omega)} = \left(\|u\|^2_{L^2(\Omega)} + | u |^2_{H^s(\Omega)}\right)^{\srn},
\]
is a Hilbert space. An equivalent construction of $H^s(\Omega)$ is obtained by restricting functions
in $H^s(\R^n)$ to $\Omega$ (\cf \cite[Chapter 34]{Tartar}).
The space $H_0^s(\Omega)$ is defined as the closure of
$C_0^{\infty}(\Omega)$ with respect to the norm $\| \cdot \|_{H^s(\Omega)}$, \ie
\begin{equation}
  \label{def:H_0^s}
  H_0^s(\Omega) = \overline{C_0^{\infty}(\Omega)}^{H^s(\Omega)}.
\end{equation}

If the boundary of $\Omega$ is smooth,
an equivalent approach to define fractional Sobolev spaces is given by interpolation in
\cite[Chapter 1]{Lions}. Set $H^0(\Omega) = L^2(\Omega)$, then Sobolev spaces with real
index $0 \leq s \leq 1$ can be defined as interpolation spaces of index $\theta = 1 - s$
for the pair $[H^1(\Omega), L^2(\Omega)]$, that is
\begin{equation}
  \label{H^sinterpolation}
  H^s(\Omega) = \left[H^1(\Omega),L^2(\Omega) \right]_{\theta}.
\end{equation}
Analogously, for $s \in [0,1]\setminus\{\srn\}$, the spaces $H_0^s(\Omega)$ are defined
as interpolation spaces of index $\theta = 1 - s$ for the pair $[H_0^1(\Omega), L^2(\Omega)]$,
in other words
\begin{equation}
  \label{H_0^sinterpolation}
  H_0^s(\Omega) = \left[H_0^1(\Omega),L^2(\Omega) \right]_{\theta},  \quad \theta \neq \srn.
\end{equation}
The space $[H_0^1(\Omega), L^2(\Omega)]_{\srn}$ is the so-called \emph{Lions-Magenes} space,
\[
  H_{00}^{\srn}(\Omega) = \left[H_0^1(\Omega),L^2(\Omega) \right]_{\srn},
\]
which can be characterized as
\begin{equation}
  \label{characLM}
  H_{00}^{\srn}(\Omega) = \left\{ w \in H^{\srn}(\Omega):
        \int_{\Omega} \frac{w^2(x')}{\textrm{dist}(x',\partial \Omega)} \diff x'  < \infty  \right\},
\end{equation}
see \cite[Theorem~11.7]{Lions}. Moreover, we have
the strict inclusion
$ H_{00}^{1/2}(\Omega) \subsetneqq H_0^{1/2}(\Omega)$
because $1 \in H_0^{1/2}(\Omega)$ but $1 \notin H_{00}^{1/2}(\Omega)$.
If the boundary of $\Omega$ is Lipschitz, the characterization
\eqref{characLM} is equivalent to the definition via interpolation, and
definitions \eqref{H^sinterpolation} and \eqref{H_0^sinterpolation}
are also equivalent to definitions \eqref{def:H^sOmega} and \eqref{def:H_0^s}, respectively.
To see this, it suffices to notice that when $\Omega = \R^n$ these definitions
yield identical spaces and equivalent norms; see \cite[Chapter 7]{Adams}.
Consequently, using the well-known extension result of  Stein
\cite{Stein} for Lipschitz domains, we obtain the asserted equivalence
(see \cite[Chapter 7]{Adams} for details).

When the boundary of $\Omega$ is Lipschitz, the space $C_0^{\infty}(\Omega)$
is dense in $H^s(\Omega)$ if and only if $s \leq \srn$,
\ie $H_0^s(\Omega) = H^s(\Omega)$. If $s>\srn$, we have
that $H_0^s(\Omega)$ is strictly contained in $H^s(\Omega)$; see \cite[Theorem~11.1]{Lions}.
In particular, we have the inclusions
$ H_{00}^{1/2}(\Omega) \subsetneqq H_0^{1/2}(\Omega) = H^{1/2}(\Omega) $.

\subsubsection{The fractional Laplace operator}
\label{sub:fractional_laplacian}
It is important to mention that there is not a unique way of defining a nonlocal
operator related to the fractional Laplacian in a bounded domain. A first possibility
is to suitably extend the functions to the whole space $\R^n$ and use Fourier transform
\[
  \F(\Laps w)(\xi') = |\xi'|^{2s}\F(w)(\xi').
\]
After extension, the following point-wise formula also serves as a definition of the fractional
Laplacian
\begin{equation}
\label{int_rep}
    \Laps w(x') = C_{n,s} \textrm{v.p.}\! \int_{\R^n} \frac{w(x')-w(z')}{|x'-z'|^{n+2s}} \diff z',
\end{equation}
where v.p. stands for the Cauchy principal value and $C_{n,s}$ is a
positive normalization constant that depends only on $n$ and $s$ which is introduced
to guarantee that the symbol of the resulting operator is $|\xi'|^{2s}$. For details
we refer the reader to \cite{CS:11,Landkof,NPV:12} and, in particular, to
\cite[Section 1.1]{Landkof} or \cite[Proposition 3.3]{NPV:12} for a proof of the equivalence of 
these two definitions.

Even if we restrict ourselves to definitions that do not require extension, there is more
than one possibility. For instance, the so-called regional fractional Laplacian (\cite{rfl1,rfl2})
is defined by restricting the Riesz integral to $\Omega$, leading to an operator related to a
Neumann problem. A different operator is obtained by using the spectral decomposition 
of the Dirichlet Laplace operator $-\Delta$, see \cite{BCdPS:12,CT:10, CDDS:11}.
This approach is also different to the integral formula  \eqref{int_rep}.
Indeed, the spectral definition depends on the domain $\Omega$ considered, while the integral 
one at any point  is independent of the domain in which the equation is set. For more details see 
the discussion in \cite{SV:12}.

The definition that we shall adopt is as in \cite{BCdPS:12,CT:10, CDDS:11} and
is based on the spectral theory of the Dirichlet Laplacian (\cite{Evans, GT})
as we summarize below.

We define $-\Delta : L^2(\Omega) \rightarrow L^2(\Omega)$ with domain
$\dom(-\Delta) = \{v\in H^1_0(\Omega): \Delta v \in  L^2(\Omega)\}$.
This operator is unbounded, closed and, since $\Omega$ is bounded and with Lipschitz
boundary, regularity theory implies that its inverse is compact.
This implies that the spectrum of the operator $-\Delta$ is discrete, positive and accumulates
at infinity. Moreover, there exist
$\{ \lambda_k,\varphi_k \}_{k\in \mathbb N} \subset \R_+\times H^1_0(\Omega)$
such that $\{\varphi_k \}_{k \in \mathbb N}$ is an orthonormal basis of $L^2(\Omega)$ and,
for $k\in\mathbb N$,
\begin{equation}
  \label{eigenvalue_problem}
  \begin{dcases}
    -\Delta \varphi_k = \lambda_k \varphi_k, & \text{in } \Omega, \\
    \varphi_k = 0, & \text{on } \partial\Omega.
  \end{dcases}
\end{equation}
Moreover, $\{\varphi_k \}_{k \in \mathbb N}$ is an orthogonal basis of $H_0^1(\Omega)$ and
$\|\nabla_{x'} \varphi_k\|_{L^2(\Omega)} = \sqrt{\lambda_k}$.

With this spectral decomposition at hand, fractional powers of the Dirichlet Laplacian
$\Laps$ can be defined for $u \in C_0^{\infty}(\Omega)$ by
\begin{equation}
  \label{def:s_in_O}
  \Laps u = \sum_{k=1}^\infty u_k \lambda_k^{s}\varphi_k,
\end{equation} 
where the coefficients $u_k$ are defined by
$ u_k = \int_{\Omega} u \varphi_k $. 
Therefore, if $f=\sum_{k=1}^{\infty}f_k \varphi_k$, 
and $\Laps u = f$, then $u_k = \lambda_k^{-s}f_k$ for all $k \geq 1$.

By density the operator $\Laps$ can be extended to the Hilbert space
\[
  \Hs = \left\{ w = \sum_{k=1}^\infty w_k \varphi_k\in L^2(\Omega): \| w \|_{\Hs}^2 = \sum_{k=1}^{\infty}
    \lambda_k^s |w_k|^2 < \infty \right\}.
\]
The theory of Hilbert scales presented in \cite[Chapter 1]{Lions} shows that
\[
  \left[H_0^1(\Omega),L^2(\Omega) \right]_{\theta} = \dom(-\Delta)^{\stwo},
\]
where $\theta = 1-s$. This implies the following characterization of the
space $\Hs$,
\begin{equation}
  \label{H}
  \Hs =
  \begin{dcases}
    H^s(\Omega),  & s \in (0,\sr), \\
    H_{00}^{1/2}(\Omega), & s = \sr, \\
    H_0^s(\Omega), & s \in (\sr,1).
  \end{dcases}
\end{equation} 

\subsection{Weighted Sobolev spaces}
\label{sub:sub:weighted_spaces}
To exploit the Caffarelli-Silvestre extension
\cite{CS:07}, or its variants \cite{BCdPS:12,CT:10, CDDS:11},
we need to deal with a degenerate/singular elliptic equation on
$\R_+^{n+1}$. To this end, we consider
weighted Sobolev spaces (see, for instance, \cite{FKS:82,HKM,Kufner}),
with the specific weight $|y|^{\alpha}$ with $\alpha \in (-1,1)$.

Let $\D \subset \R^{n+1}$ be an open set and $\alpha \in (-1,1)$. We define
$L^2(\D, |y|^{\alpha})$ as the space of all measurable functions defined on $\D$ such that
\[
\| w \|_{L^2(\D,|y|^{\alpha})}^2 = \int_{\D}|y|^{\alpha} w^2 < \infty.
\]
Similarly we define the weighted Sobolev space
\[
H^1(\D,|y|^{\alpha}) =
  \left\{ w \in L^2(\D,|y|^{\alpha}): | \nabla w | \in L^2(\D,|y|^{\alpha}) \right\},
\]
where $\nabla w$ is the distributional gradient of $w$. We equip $H^1(\D,|y|^{\alpha})$ with
the norm
\begin{equation}
\label{wH1norm}
\| w \|_{H^1(\D,|y|^{\alpha})} =
\left(  \| w \|^2_{L^2(\D,|y|^{\alpha})} + \| \nabla w \|^2_{L^2(\D,|y|^{\alpha})} \right)^{\srn}.
\end{equation}
Notice that taking $\alpha = 0$ in the definition above, we obtain the classical
$H^1(\D)$.

Properties of this weighted Sobolev space can be found in classical references
like \cite{HKM, Kufner}. It is remarkable that most of the properties of classical Sobolev spaces
have a weighted counterpart and it is more so that this is not because of the specific form of the
weight but rather due to the fact that the weight $|y|^\alpha$ belongs to the so-called
Muckenhoupt class $A_2(\R^{n+1})$; see \cite{FKS:82,GU,Muckenhoupt}.
We recall the definition of Muckenhoupt classes.

\begin{definition}[Muckenhoupt class $A_p$]
 \label{def:Muckenhoupt}
Let $\omega$ be a positive and measurable function such that
$\omega \in L^1_{loc}(\R^{N})$ with $N \geq 1$. We say $\omega \in A_p(\R^N)$,
$1 < p < \infty$, if there exists a positive constant $C_{p,\omega}$ such that
\begin{equation}
  \label{A_pclass}
  \sup_{B} \left( \frac{1}{|B|}\int_{B} \omega \right)
            \left(\frac{1}{|B|}\int_{B} \omega^{1/(1-p)} \right)^{p-1}  = C_{p,\omega} < \infty,
\end{equation}
where the supremum is taken over all balls $B$ in $\R^N$ and $|B|$ denotes the Lebesgue measure of
$B$.
\end{definition}

Since $\alpha \in (-1,1)$ it is immediate that $|y|^{\alpha} \in A_2(\R^{n+1})$, which
implies the following important result (see \cite[Theorem~1]{GU}).

\begin{proposition}[Properties of weighted Sobolev spaces]
\label{PR:}
Let $\D \subset \R^{n+1}$ be an open set and $\alpha \in (-1,1)$. Then $H^1(\D,|y|^{\alpha})$,
equipped with the norm \eqref{wH1norm}, is a Hilbert space. Moreover, the set
$C^{\infty}(\D) \cap H^1(\D,|y|^{\alpha})$ is dense in $H^1(\D,|y|^{\alpha})$.
\end{proposition}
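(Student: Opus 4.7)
My plan is to prove the two assertions in turn, following the classical Sobolev template but invoking the Muckenhoupt $A_2$ property wherever an argument using Lebesgue measure would otherwise be used. Both parts are standard once the right tools are identified.

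For the Hilbert structure, I would equip $H^1(\D,|y|^\alpha)$ with the natural inner product $(u,v) = \int_\D |y|^\alpha (uv + \nabla u\cdot\nabla v)$, whose induced norm is precisely \eqref{wH1norm}. The only nontrivial point is completeness. Given a Cauchy sequence $\{u_n\}$ in $H^1(\D,|y|^\alpha)$, both $\{u_n\}$ and the distributional partials $\{\partial_i u_n\}$ are Cauchy in $L^2(\D,|y|^\alpha)$, which is itself complete by the Riesz--Fischer theorem applied to the positive measure $|y|^\alpha\,dx$. Call the limits $u$ and $g_i$. To identify $g_i = \partial_i u$ distributionally I pass to the limit in $\int_\D u_n\partial_i\varphi = -\int_\D(\partial_i u_n)\varphi$ for every $\varphi\in C_0^\infty(\D)$. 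This passage is legitimate because $L^2(\D,|y|^\alpha)\hookrightarrow L^1_{\mathrm{loc}}(\D)$: for any compact $K\subset\D$, Cauchy--Schwarz gives $\int_K|w|\le \|w\|_{L^2(\D,|y|^\alpha)}(\int_K|y|^{-\alpha})^{1/2}$, and $|y|^{-\alpha}$ is locally integrable since $|y|^\alpha\in A_2(\R^{n+1})$ forces $|y|^{-\alpha}\in L^1_{\mathrm{loc}}(\R^{n+1})$ via \eqref{A_pclass} with $p=2$ (independently, $\alpha\in(-1,1)$ already suffices).

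For density I would run a Meyers--Serrin construction. Fix $u\in H^1(\D,|y|^\alpha)$ and $\epsilon>0$, choose an exhaustion $\D_1\subset\subset\D_2\subset\subset\cdots$ of $\D$ with $\bigcup_k\D_k=\D$, and pick a subordinate smooth partition of unity $\{\eta_k\}\subset C_0^\infty(\D)$. Each product $\eta_k u$ has compact support in $\D$; mollify it by a standard smoothing kernel $J_{\delta_k}$ at a scale $\delta_k$ so small that $J_{\delta_k}*(\eta_k u)$ is smooth, remains compactly supported in $\D$, and obeys $\|J_{\delta_k}*(\eta_k u) - \eta_k u\|_{H^1(\D,|y|^\alpha)}\le\epsilon/2^k$. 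Summing these pieces yields a function in $C^\infty(\D)\cap H^1(\D,|y|^\alpha)$ within $\epsilon$ of $u$.

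The main obstacle, and the only place the full strength of $|y|^\alpha\in A_2$ is really needed, is the mollification convergence $J_\delta*w\to w$ in $L^2(\D,|y|^\alpha)$ for compactly supported $w$. The key tool is Muckenhoupt's theorem: since $|y|^\alpha\in A_2(\R^{n+1})$, the Hardy--Littlewood maximal operator is bounded on $L^2(\R^{n+1},|y|^\alpha)$, and the pointwise domination $|J_\delta*w|\lesssim Mw$ yields the uniform bound $\|J_\delta*w\|_{L^2(|y|^\alpha)}\lesssim\|w\|_{L^2(|y|^\alpha)}$. Combined with the elementary convergence $J_\delta*w\to w$ on the dense subspace of continuous compactly supported functions in $L^2(\D,|y|^\alpha)$, a standard $\epsilon/3$ argument upgrades this to convergence for every compactly supported $w\in L^2(\D,|y|^\alpha)$. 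Applied to $w=\eta_k u$ and $w=\partial_i(\eta_k u)$, and using the commutation $\partial_i(J_{\delta_k}*(\eta_k u)) = J_{\delta_k}*\partial_i(\eta_k u)$, this delivers the desired $\epsilon/2^k$ estimate piece by piece, completing the density proof.
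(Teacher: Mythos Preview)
Your argument is correct. Both parts follow the standard template: completeness via $L^2(\D,|y|^\alpha)\hookrightarrow L^1_{\mathrm{loc}}(\D)$ to identify distributional limits, and density via a Meyers--Serrin construction in which the mollification step is justified through the Muckenhoupt bound on the maximal operator in $L^2(|y|^\alpha)$. The pointwise domination $|J_\delta * w|\lesssim Mw$, the density of $C_c(\D)$ in $L^2(\D,|y|^\alpha)$, and the commutation of mollification with distributional derivatives are all legitimate here, so the $\epsilon/3$ argument goes through.

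The paper, however, does not supply its own proof of this proposition: it simply records that $|y|^\alpha\in A_2(\R^{n+1})$ and cites \cite[Theorem~1]{GU} for the conclusion. Your write-up is therefore not so much a different route as a self-contained substitute for that citation, and it is essentially the argument one finds in the cited literature on weighted Sobolev spaces with $A_p$ weights.
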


\begin{remark}[Weighted $L^2$ vs $L^1$]
\label{RK:L1} \rm
If $\D$ is a bounded domain and $\alpha \in (-1,1)$ then, $L^2(\D,|y|^{\alpha}) \subset
L^1(\D)$. Indeed, since $|y|^{-\alpha} \in L^1_{loc}(\R^{n+1})$,
\[
  \int_{\D} |w| = \int_{\D} |w| |y|^{\alpha/2} |y|^{-\alpha/2}
  \leq \left( \int_{\D} |w|^2 |y|^{\alpha} \right)^{\srn}\left( \int_{\D} |y|^{-\alpha} \right)^{\srn}
  \lesssim \| w\|_{L^2(\D,|y|^{\alpha})}.
\]
\end{remark}

The following result is given in \cite[Theorem~6.3]{Kufner}. For completeness we
present here a version of the proof on the truncated cylinder $\C_\Y$, which will be
important for the numerical approximation of problem \eqref{alpha_harm_intro}.

\begin{proposition}[Embeddings in weighted Sobolev spaces]
\label{PR:comp_classical_weighted}
Let $\Omega$ be a bounded domain in $\R^n$ and $\Y > 0$. Then
\begin{equation}
\label{clas_into_weighted}
  H^1(\C_\Y) \hookrightarrow H^1(\C_\Y,y^{\alpha}), \quad \textrm{ for } \alpha \in (0,1),
\end{equation}
and
\begin{equation}
\label{weighted_into_clas}
  H^1(\C_\Y,y^{\alpha}) \hookrightarrow H^1(\C_\Y), \quad \textrm{ for } \alpha \in (-1,0).
\end{equation}
\end{proposition}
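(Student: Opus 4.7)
The proof plan is based on a direct pointwise comparison between the constant weight $1$ and the weight $y^\alpha$ on the bounded interval $(0,\Y)$; no Muckenhoupt machinery is needed because the cylinder is truncated.

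First, I would handle the case $\alpha \in (0,1)$. On $(0,\Y)$ the function $y \mapsto y^\alpha$ is increasing and bounded above by $\Y^\alpha$. Consequently, for any measurable $w$ on $\C_\Y$,
\[
\int_{\C_\Y} y^\alpha |w|^2 \,\diff x' \diff y \;\le\; \Y^\alpha \int_{\C_\Y} |w|^2 \,\diff x' \diff y,
\]
and the same inequality holds with $w$ replaced by each partial derivative of $w$. Adding these estimates shows $\|w\|_{H^1(\C_\Y, y^\alpha)} \le \Y^{\alpha/2}\|w\|_{H^1(\C_\Y)}$, which yields the continuous embedding \eqref{clas_into_weighted}. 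Note this immediately implies that if $w \in H^1(\C_\Y)$ then its distributional gradient indeed lies in $L^2(\C_\Y, y^\alpha)$, so the embedding is well-defined at the level of spaces, not merely norms.

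Next, for $\alpha \in (-1,0)$, the map $y \mapsto y^\alpha$ is instead decreasing on $(0,\Y)$, so its minimum on this interval is attained at $y=\Y$ and equals $\Y^\alpha$. Hence $1 \le \Y^{-\alpha} y^\alpha$ pointwise on $\C_\Y$, which gives
\[
\int_{\C_\Y} |w|^2 \,\diff x' \diff y \;\le\; \Y^{-\alpha} \int_{\C_\Y} y^\alpha |w|^2 \,\diff x' \diff y,
\]
and the analogous bound for $|\nabla w|^2$. Summing, we deduce $\|w\|_{H^1(\C_\Y)} \le \Y^{-\alpha/2}\|w\|_{H^1(\C_\Y, y^\alpha)}$. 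There is one small subtlety worth flagging: I need to know that a function in $H^1(\C_\Y, y^\alpha)$ actually makes sense as an element of $L^1_{loc}(\C_\Y)$ so that its distributional gradient coincides with the weighted-sense gradient; this is exactly what Remark~\ref{RK:L1} provides, since $\C_\Y$ is bounded.

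I do not anticipate any real obstacle here; the argument is essentially a one-line pointwise inequality for the weight on a bounded interval, promoted to a Sobolev-norm inequality by integration. The only thing to keep track of is that truncation of the cylinder in the extended variable is essential: on the full semi-infinite cylinder $\C$ neither bound holds because $y^\alpha$ is unbounded as $y \to \infty$ (for $\alpha > 0$) or as $y \to 0^+$ (for $\alpha < 0$). This is precisely why the statement is formulated on $\C_\Y$ and not on $\C$.
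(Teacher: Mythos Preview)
Your proof is correct and follows essentially the same pointwise comparison argument as the paper: bound $y^\alpha$ above by $\Y^\alpha$ for $\alpha\in(0,1)$ and below by $\Y^\alpha$ for $\alpha\in(-1,0)$, then integrate. Your treatment is in fact slightly more explicit than the paper's (you spell out the second case and flag the $L^1_{loc}$ issue via Remark~\ref{RK:L1}), but the core idea is identical.
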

\begin{proof}
Let us prove \eqref{clas_into_weighted}, the proof of \eqref{weighted_into_clas} being
similar. Since $\alpha>0$ we have $y^\alpha\le\Y^\alpha$, whence
$y^\alpha w^2 \leq \Y^\alpha w^2$ and
$y^{\alpha} |\nabla w|^2 \leq \Y^\alpha |\nabla w|^2$ a.e.~on $\C_\Y$
for all $w\in H^1(\C_\Y)$. This implies
$
  \| w \|_{H^1(\C_\Y,y^{\alpha})} \leq {\sqrt{2}} \Y^{\alpha/2} \| w \|_{H^1(\C_\Y )},
$
which is \eqref{clas_into_weighted}.
\end{proof}

Define
\begin{equation}
  \label{HL10}
  \HL(\C,y^{\alpha}) = \left\{ w \in H^1(y^\alpha;\C): w = 0 \textrm{ on } \partial_L \C\right\}.
\end{equation}
This space can be equivalently defined as the set of
measurable functions $w: \C \rightarrow \R$ such that $w \in H^1(\Omega \times (s,t))$ for all
$0 < s < t < \infty$, $w = 0$ on $\partial_L \C$ and for which the following seminorm is finite
\begin{equation}
  \label{seminormHL10}
  \| w \|^2_{\HLn(\C,y^{\alpha})} = \int_{\C}y^{\alpha} |\nabla w |^2;
\end{equation}
see \cite{CDDS:11}. As a consequence of the usual Poincar\'e inequality, for any $k \in \mathbb{Z}$
and any function $w \in H^1(\Omega \times (2^k,2^{k+1}))$ with $w = 0$ on
$\partial \Omega \times (2^k,2^{k+1})$, we have
\begin{equation}
\label{Poincare_interval}
  \int_{ \Omega \times (2^k,2^{k+1})} y^{\alpha}  w^2
    \leq C_{\Omega} \int_{ \Omega \times (2^k,2^{k+1})} y^{\alpha} |\nabla w |^2,
\end{equation}
where $C_\Omega$ denotes a positive constant that depends only on $\Omega$.
Summing up over $k \in \mathbb{Z}$, we obtain the following \emph{weighted Poincar\'e inequality}:
\begin{equation}
\label{Poincare_ineq}
  \int_{ \C }y^{\alpha}  w^2 \lesssim \int_{ \C}y^{\alpha} |\nabla w |^2.
\end{equation}
Hence, the seminorm \eqref{seminormHL10} is a norm on $\HL(\C,y^{\alpha})$,
equivalent to \eqref{wH1norm}.

For a function $w \in H^1(\C, y^{\alpha})$, we shall denote by $\tr w$ its trace onto
$\Omega \times \{ 0 \}$. It is well known that $\tr H^1(\C) = {H^{1/2}(\Omega)}$;
see \cite{Adams,Tartar}. In the subsequent analysis we need a characterization of the trace
of functions in $H^1(\C, y^{\alpha})$. For a smooth domain this was given in
\cite[Proposition~1.8]{CT:10} for $s=1/2$ and in \cite[Proposition~2.1]{CDDS:11} 
for any $s \in (0,1)\setminus\{\srn\}$.
However, since the eigenvalue decomposition \eqref{def:s_in_O} of the Dirichlet Laplace operator
holds true on a Lipschitz domain, we are able to extend this trace characterization
to such domains. In summary, we have the following result.
\begin{proposition}[Characterization of  $\tr \HL(\C, y^\alpha)$]
\label{H=LM=TR}
Let $\Omega \subset \R^n$ be a bounded Lipschitz domain.
The trace operator $\tr$ satisfies $\tr \HL(\C, y^\alpha) = \Hs$ and
\[
  \|\tr v\|_{\Hs} \lesssim \|v\|_{\HLn(\C, y^\alpha)}
\qquad\forall \, v\in \HL(\C, y^\alpha),
\]
where the space $\Hs$ is defined in \eqref{H}.
\end{proposition}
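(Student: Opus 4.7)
The plan is to diagonalize using the spectral basis $\{\varphi_k\}_{k\in\mathbb N}$ of $-\Delta$ on $\Omega$---which, as noted in Section~\ref{sub:fractional_laplacian}, is available under the Lipschitz assumption---and thereby reduce the $(n+1)$-dimensional trace question to a one-parameter family of one-dimensional problems indexed by the eigenvalues $\lambda_k$.

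For $v\in\HL(\C, y^\alpha)$, expanding each horizontal slice in $\{\varphi_k\}$,
\[
v(x',y)=\sum_{k=1}^\infty v_k(y)\,\varphi_k(x'),\qquad v_k(y)=\int_\Omega v(x',y)\,\varphi_k(x')\diff x',
\]
and invoking Parseval's identity in $L^2(\Omega)$ for both $v$ and $\nabla_{x'}v$ (using $\|\nabla_{x'}\varphi_k\|_{L^2(\Omega)}^2=\lambda_k$), I can rewrite the norm as
\[
\|v\|^2_{\HLn(\C, y^\alpha)}=\sum_{k=1}^\infty\int_0^\infty y^{\alpha}\bigl(\lambda_k|v_k(y)|^2+|v_k'(y)|^2\bigr)\diff y.
\]
Since $\tr v=\sum_k v_k(0)\varphi_k$, the characterization of $\Hs$ in \eqref{H} reduces continuity of $\tr$ to the mode-by-mode bound
\[
\lambda_k^s|v_k(0)|^2\lesssim\int_0^\infty y^\alpha\bigl(\lambda_k|v_k(y)|^2+|v_k'(y)|^2\bigr)\diff y,
\]
with a constant independent of $k$. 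Using the dilation $z=\sqrt{\lambda_k}\,y$ together with $\alpha=1-2s$, both sides acquire a factor $\lambda_k^s$, and the estimate collapses to the one-dimensional weighted trace inequality $|W(0)|^2\lesssim\int_0^\infty z^\alpha(|W|^2+|W'|^2)\diff z$. The latter follows by fixing a smooth cutoff $\chi$ with $\chi(0)=1$ and $\operatorname{supp}\chi\subset[0,1]$, writing $W(0)=-\int_0^1(\chi W)'\diff z$, and applying Cauchy--Schwarz together with $\int_0^1 z^{-\alpha}\diff z<\infty$ (valid since $\alpha<1$, while $\alpha>-1$ ensures integrability of $z^\alpha$ near $0$).

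For surjectivity, given $u=\sum_k u_k\varphi_k\in\Hs$ I exhibit an explicit extension $v(x',y)=\sum_k u_k\,\theta(\sqrt{\lambda_k}\,y)\,\varphi_k(x')$, where $\theta:[0,\infty)\to\R$ is the unique decaying solution of the weighted ODE $(z^\alpha\theta')'=z^\alpha\theta$ with $\theta(0)=1$. A one-dimensional energy computation combined with the same dilation shows that each mode contributes exactly $\lambda_k^s|u_k|^2$, up to a fixed constant depending only on $\theta$, to $\|v\|^2_{\HLn(\C,y^\alpha)}$; hence $v\in\HL(\C,y^\alpha)$ with the desired norm bound, and $\tr v=u$ by construction.

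The principal obstacle is justifying the Parseval-type identity for $\nabla v$ in the \emph{weighted} $L^2$ setting together with the interchange of $\partial_y$ and the potentially infinite spectral sum; this calls for a density argument based on Proposition~\ref{PR:} applied to finite-mode truncations of $v$, followed by a limiting procedure that relies on the weighted Poincar\'e inequality \eqref{Poincare_ineq} to control the tail. Once these convergence issues are settled, the remaining steps are genuinely one-dimensional, and the hidden constants depend only on $\alpha$ (hence on $s$) and on $\Omega$ through its Poincar\'e constant.
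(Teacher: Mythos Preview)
Your proposal is correct and follows essentially the same approach the paper has in mind: the paper does not spell out a proof but refers to \cite[Proposition~1.8]{CT:10} and \cite[Proposition~2.1]{CDDS:11}, noting that the only ingredient needed to carry their arguments over to Lipschitz $\Omega$ is the spectral decomposition \eqref{eigenvalue_problem} of the Dirichlet Laplacian. Those references proceed exactly as you do---diagonalize in the $\{\varphi_k\}$ basis, rescale by $\sqrt{\lambda_k}$ to reduce to a single one-dimensional weighted trace inequality, and use the Bessel-type extension $\psi_k$ of \eqref{psik} (your $\theta$) for surjectivity, which is what yields the identity \eqref{estimate_s}.
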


\subsection{The Caffarelli-Silvestre extension problem}
\label{sub:CaffarelliSilvestre}
It has been shown in \cite{CS:07} that any power of the fractional
Laplacian in $\R^n$ can be determined as an operator that maps a Dirichlet boundary condition
to a Neumann-type condition via an extension problem posed on $\R_+^{n+1}$. For
a bounded domain, an analogous result has been obtained in \cite{CT:10} for $s=\sr$, and in
\cite{BCdPS:12,CDDS:11,ST:10} for any $s \in (0,1)$.

Let us briefly describe these results.
Consider a function $u$ defined on $\Omega$. We define the $\alpha$-harmonic extension
of $u$ to the cylinder $\C$, as the function $\ve$ that solves the boundary value problem
\begin{equation}
\label{alpha_harmonic_extension}
  \begin{dcases}
    \DIV (y^\alpha \nabla \ve ) = 0, & \text{in } \C, \\
    \ve = 0, & \text{on } \partial_L\C, \\
    \ve = u, & \text{on } \Omega\times\{0\}.
  \end{dcases}
\end{equation}
From Proposition~\ref{H=LM=TR} and the Lax Milgram lemma we can conclude that this
problem has a unique solution $\ve \in \HL(\C,y^\alpha)$ whenever $u\in \Hs$.
We define the  \emph{Dirichlet-to-Neumann} operator $\Gamma_{\alpha,\Omega} : \Hs \to \Hs'$
\[
   u \in \Hs \longmapsto
    \Gamma_{\alpha,\Omega}(u) = \frac{\partial \ve}{\partial \nu^{\alpha}} \in \Hs',
\]
where $\ve$ solves \eqref{alpha_harmonic_extension} 
and $\tfrac{\partial \ve}{\partial \nu^{\alpha}}$ is given in \eqref{def:lf}.
The space $\Hs'$ can be characterized as the space of distributions $h = \sum_k h_k \varphi_k$
such that $\sum_k |h_k|^2 \lambda_k^{-s} < \infty$. The fundamental result of \cite{CS:07},
see also \cite[Lemma~2.2]{CDDS:11}, is then that
\[
  d_s \Laps u = \Gamma_{\alpha,\Omega}(u),
\]
where $d_s$ is given by
\begin{equation}
\label{d_s}
  d_s = 2^{1-2s} \frac{\Gamma(1-s)}{\Gamma(s)}.
\end{equation}
It seems remarkable that this constant does not depend on the dimension. This
was proved originally in \cite{CS:07} and its precise value appears in
several references, for instance \cite{BCdPS:12,CS:11}.

The relation between the fractional Laplacian and the extension problem is now clear.
Given $f \in \Hs'$, a function $u \in \Hs$ solves \eqref{fl=f_bdddom} if and only if its
$\alpha$-harmonic extension $\ve \in \HL(\C,y^{\alpha})$ solves \eqref{alpha_harm_intro}.

If $u=\sum_k u_k \varphi_k$, then, as shown in the proofs of
\cite[Proposition~2.1]{CDDS:11} and \cite[Lemma~2.2]{BCdPS:12}, $\ve$ can be expressed as
\begin{equation}
\label{exactforms}
  \ve(x) = \sum_{k=1}^\infty u_k \varphi_k(x') \psi_k(y),
\end{equation}
where the functions $\psi_k$ solve
\begin{equation}
\label{psik}
  \begin{dcases}
    \psi_k'' + \frac{\alpha}{y}\psi_k' - \lambda_k \psi_k = 0, & \text{in } (0,\infty), \\
    \psi_k(0) = 1, & \lim_{y\rightarrow \infty} \psi_k(y) = 0.
  \end{dcases}
\end{equation}
If $s=\sr$, then clearly $\psi_k(y) = e^{-\sqrt{\lambda_k}y}$ (see \cite[Lemma~2.10]{CT:10}).
For $s \in (0,1)\setminus\{\srn\}$ instead (\cf \cite[Proposition~2.1]{CDDS:11})
\[
  \psi_k(y) = c_s \left(\sqrt{\lambda_k}y\right)^s K_s(\sqrt{\lambda_k} y),
\]
where $K_s$ denotes the modified Bessel function of the second kind (see \cite[Chapter~9.6]{Abra}).
Using the condition $\psi_k(0) = 1$, and formulas for small arguments of the function $K_s$
(see for instance \S~\ref{subsub:asymptotics}) we obtain
\[
  c_s = \frac{2^{1-s}}{\Gamma(s)}.
\]

The function $\ve \in \HL(\C,y^{\alpha})$ is the unique solution of
\begin{equation}
\label{alpha_harmonic_extension_weak}
  \int_{\C} y^{\alpha} \nabla \ve \cdot \nabla \phi = d_s \langle f, \tr \phi \rangle_{\Hs \times \Hs'},
  \quad \forall \phi \in \HL(\C,y^{\alpha}),
\end{equation}
where  $\langle \cdot, \cdot \rangle_{\Hs \times \Hs'}$ denotes the duality pairing between $\Hs$ and
$\Hs'$ which, in light of Proposition~\ref{H=LM=TR} is well defined for all
$f \in \Hs'$ and $\phi \in \HL(\C,y^{\alpha})$. This implies the
following equalities
(see \cite[Proposition 2.1]{CDDS:11} for $s \in (0,1)\setminus\{\srn\}$
and  \cite[Proposition 2.1]{CT:10} for $s = \srn $):
\begin{equation}
\label{estimate_s}
  \| \ve \|_{\HLn(\C,y^{\alpha})} = d_s \| u \|_{\Hs} = d_s \| f\|_{\Hs'}.
\end{equation}

Notice that for $s = \srn $, or equivalently $\alpha=0$, 
problem \eqref{alpha_harmonic_extension_weak} reduces to the 
weak formulation of the Laplace operator with mixed boundary conditions,
which is posed on the classical Sobolev space $\HL(\C)$. 
Therefore, the value $s = \srn$ becomes a special case for
problem \eqref{alpha_harmonic_extension_weak}. In addition,
$d_{1/2} = 1$, and $\| \ve \|_{\HLn(\C)} = \| u \|_{H^{1/2}_{00}(\Omega)}$.

At this point it is important to give a precise meaning to the Dirichlet boundary condition
in \eqref{fl=f_bdddom}.
For $s=\srn$, the boundary condition is interpreted in the 
sense of the Lions--Magenes space.
If $\srn < s \leq 1$, there is a trace operator from $\Hs$ into $L^2(\partial \Omega)$
and the boundary condition can be interpreted in this sense. 
For $0 < s < 1/2$
this interpretation is no longer possible
and thus, for an arbitrary $f \in \Hs'$ the boundary condition does not have a clear
meaning. For instance, for every $s \in (0,\srn)$, $f = \Laps 1\in \Hs'$
and the solution to \eqref{fl=f_bdddom} for this right hand side is $u=1$.
If $f \in H^\zeta(\Omega)$ with $\zeta > \srn - 2s > -s$,
using that $\Laps$ is a pseudo-differential operator of order $2s$ a shift-type result is valid, \ie  
$u \in H^{\varrho}(\Omega)$ with $\varrho = \zeta + 2s > 1/2$.
In this case, the trace of $u$ on $\partial \Omega$ is well defined and the boundary
condition is meaningful. Finally, we comment that it has been proved in \cite[Lemma~2.10]{CDDS:11},
that if $f \in L^{\infty}(\Omega)$ then the solution of \eqref{fl=f_bdddom} belongs to
$C^{0,\varkappa}(\overline{\Omega})$ with $\varkappa \in (0,\min\{2s,1\})$.

\subsection{Asymptotic estimates}
\label{subsub:asymptotics}
It is important to understand the behavior of the solution $\ve$ of problem \eqref{alpha_harm_intro}, 
given by \eqref{exactforms}. Consequently, it becomes necessary
to recall some of the main properties of the modified Bessel function of the second kind 
$K_{\nu}(z)$, $\nu \in \R$; see \cite[Chapter 9.6]{Abra}
for \eqref{itemi}-\eqref{itemiv} and \cite[Theorem 5]{MS:01} for \eqref{itemv}:

\begin{enumerate}[(i)]
 \item \label{itemi}For $\nu >-1$ , $K_{\nu}(z)$ is real and positive.
 \item \label{itemii}For $\nu \in \R$, $K_{\nu}(z) = K_{-\nu}(z)$.
 \item \label{itemiii}For $\nu > 0$,
   \begin{equation}
     \label{asympat0}
     \lim_{z \downarrow 0} \frac{K_{\nu}(z)}{\sr \Gamma(\nu) \left( \sr z \right)^{-\nu} } = 1.
   \end{equation}
 \item \label{itemiv}For $k \in \mathbb{N}$,
  \[
   \left( \frac{1}{z} \frac{\diff }{\diff z}\right)^k \left( z^{\nu} K_{\nu}(z) \right)
    = (-1)^k z^{\nu-k}K_{\nu-k}(z).
  \]
In particular, for $k=1$ and $k=2$, respectively, we have
\begin{equation}
 \label{1derivative}
 \frac{\diff }{\diff z} \left( z^{\nu} K_{\nu}(z) \right) = -z^{\nu} K_{\nu-1}(z) = -z^{\nu} K_{1-\nu}(z),
\end{equation}
and
\begin{equation}
 \label{2derivative}
 \frac{\diff^2 }{\diff z^2} \left( z^{\nu} K_{\nu}(z) \right) = z^{\nu} K_{2- \nu}(z) -z^{\nu-1} K_{1-\nu}(z).
\end{equation}
\item \label{itemv}For $z>0$, $z^{\min\{\nu,1/2\}}e^{z}K_{\nu}(z)$ is a decreasing function.
\end{enumerate}

As an application we obtain the following important
properties of the function $\psi_k$, defined in \eqref{psik}. First, for $s \in (0,1)$,
properties \eqref{itemii}, 
\eqref{itemiii} and \eqref{itemiv}
imply
\begin{equation}
 \label{asym-psi_k-p}
 \lim_{y \downarrow 0^{+} } \frac{y^{\alpha}\psi_{k}'(y)}{d_s \lambda_k^s } = - 1,
\end{equation}
Property \eqref{itemv} provides the following asymptotic estimate for $s \in (0,1)$
and $y \geq 1$:
\begin{equation}
 \label{y-psi_k-psi_kp}
 |y^{\alpha} \psi_k(y) \psi_k'(y)| 
 \leq C(s) \lambda_{k}^s \left( \sqrt{\lambda_k} y \right)^{\left|s-\sr \right|}
 e^{-2 \sqrt{\lambda_k} y}.
\end{equation}
Multiplying the differential equation of problem \eqref{psik} 
by $y^{\alpha}\psi_k(y)$ and integrating by parts yields
\begin{equation}
 \label{besselenergy}
 \int_{a}^{b} y^{\alpha} \left( \lambda_k \psi_k(y)^2 + \psi_k'(y)^2 \right) \diff y
 = \left. y^{\alpha} \psi_k(y)\psi_k'(y) \right|_{a}^{b},
\end{equation}
where $a$ and $b$ are real and positive constants.

Let us conclude this section with some remarks on the asymptotic
behavior of the function $\ve$ that solves \eqref{alpha_harmonic_extension_weak}.
Using \eqref{exactforms} we obtain
\[
  \ve(x)|_{y=0} = \sum_{k=1}^{\infty} u_k \varphi_k(x') \psi_k(0) =
  \sum_{k=1}^{\infty} u_k \varphi_k(x') = u(x').
\]
For $s \in (0,1)$, using formula \eqref{asym-psi_k-p} together with \eqref{def:s_in_O}, we arrive at
\begin{equation}
\label{asymptoticve}
  \frac{\partial\ve}{\partial \nu^\alpha}(x',0) = 
  -\lim_{y \downarrow 0 } y^{\alpha} \ve_y(x',y) = d_s f(x'), \quad \textrm{on } \Omega \times \{0\}.
\end{equation}
Notice that, if $s = \srn$, then $\alpha=0$, $d_{1/2}=1$ and thus
\eqref{asymptoticve} reduces to
\begin{equation}
\nonumber
  \left.\frac{\partial \ve}{\partial \nu}\right|_{\Omega \times \{0\}} =  f(x').
\end{equation}
For $s \in (0,1) \setminus \{ \srn \}$ the asymptotic behavior of 
the second derivative $\ve_{yy}$ as $y \approx 0^+$ is a consequence of
\eqref{2derivative} applied to the function $\psi_k(y)$. 
For $s=\srn$ the behavior follows from $\psi_k(y) = e^{-\sqrt{\lambda_k}y}$.
In conclusion, for $y \approx 0^{+}$, we have
\begin{equation}
\label{asymptoticveyy}
  \ve_{yy} \approx  y^{-\alpha-1} \quad\textrm{for } s \in (0,1) \setminus \{ \srn \}, 
\qquad \ve_{yy} \approx 1 \quad\textrm{for } s=\srn.
\end{equation}

\subsection{Regularity of the solution}
\label{subsub:regularity}
Since we are interested in the approximation of the
solution of problem \eqref{alpha_harmonic_extension_weak}, and this
is closely related to its regularity, let us now study the
behavior of its derivatives. According to \eqref{asymptoticve},
$\ve_y \approx y^{-\alpha}$ for $y \approx 0^+$. This clearly shows the
necessity of introducing the weight, as this behavior, together
with the exponential decay given by \eqref{itemv} of \S~\ref{subsub:asymptotics}, imply that
$\ve_y \in L^2(\C,y^\alpha)\setminus L^2(\C)$ for $s \in (0,1/4]$.

However, the situation with second derivatives  is much more delicate. To see this,
let us first argue heuristically and compute how these derivatives scale with $y$.
From the asymptotic formula \eqref{asymptoticveyy},
we see that, for $0< \delta \ll 1 $ and $s \in (0,1)\setminus \{\srn\}$,
\begin{equation}
\label{int=inf}
  \int_{\Omega \times (0,\delta)} y^{\alpha}\left| \ve_{yy} \right|^2 \diff x' \diff y
  \approx \int_{0}^{\delta} y^{\alpha} y^{-2-2\alpha} \diff y =
  \int_{0}^{\delta} y^{-2-\alpha} \diff y,
\end{equation}
which, since $\alpha \in (-1,1)\setminus \{0\}$, does not converge. However,
\[
  \int_{\Omega \times (0,\delta)} y^{\beta}\left| \ve_{yy} \right|^2 \diff x \diff y \approx
  \int_{0}^{\delta} y^{\beta-2-2\alpha} \diff y,
\]
converges for $\beta > 2\alpha + 1$, hinting at the fact that
$\ve \in H^2(\C, y^\beta) \setminus H^2(\C, y^\alpha)$.
The following result makes these considerations
rigorous. 

\begin{theorem}[Global regularity of the $\alpha$-harmonic extension]
\label{TH:regularity}
Let $f \in \Ws$, where $\Ws$ is defined in \eqref{H} for $s\in(0,1)$. Let
$\ve \in \HL(\C,y^{\alpha})$ solve \eqref{alpha_harmonic_extension_weak} with $f$ as data.
Then, for $s \in (0,1) \setminus \{\sr\}$, we have
\begin{equation}
\label{reginx}
  \| \Delta_{x'} \ve \|^2_{L^2(\C,y^{\alpha})} + \| \partial_y \nabla_{x'} \ve \|^2_{L^2(\C,y^{\alpha})}
  = d_s \| f \|_{\Ws}^2,
\end{equation}
and
\[
  \| \ve_{yy} \|_{L^2(\C,y^{\beta})} \lesssim \| f \|_{L^2(\Omega)},
\]
with $\beta>2\alpha+1$.
For the special case $s = \sr$, we obtain
\[
 \| \ve \|_{H^2(\C)} \lesssim \| f \|_{\mathbb{H}^{1/2}(\Omega)}.
\]
\end{theorem}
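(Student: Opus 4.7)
The plan is to exploit the series representation $\ve(x',y) = \sum_{k=1}^\infty u_k \varphi_k(x')\psi_k(y)$, with $u_k = \lambda_k^{-s} f_k$, and reduce every norm to one-dimensional integrals in $y$. By the $L^2(\Omega)$-orthonormality of $\{\varphi_k\}$ and the identity $\int_\Omega \nabla_{x'}\varphi_k\cdot\nabla_{x'}\varphi_j = \lambda_k\delta_{jk}$, Parseval in $x'$ yields
\begin{equation*}
\|\Delta_{x'}\ve\|_{L^2(\C, y^\alpha)}^2 + \|\partial_y\nabla_{x'}\ve\|_{L^2(\C,y^\alpha)}^2 = \sum_k u_k^2\, \lambda_k \int_0^\infty y^\alpha\bigl[\lambda_k\psi_k(y)^2 + \psi_k'(y)^2\bigr]\diff y.
\end{equation*}
The key step is then to invoke identity \eqref{besselenergy} with $(a,b)=(0^+,\infty)$: the upper boundary term vanishes thanks to the exponential bound \eqref{y-psi_k-psi_kp}, while the lower one is evaluated using $\psi_k(0)=1$ and \eqref{asym-psi_k-p}. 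This gives $\int_0^\infty y^\alpha[\lambda_k\psi_k^2+(\psi_k')^2]\diff y = d_s\lambda_k^s$, so multiplying by $\lambda_k u_k^2 = \lambda_k^{1-2s}f_k^2$ and summing yields $d_s\sum_k\lambda_k^{1-s}f_k^2 = d_s\|f\|_{\Ws}^2$.

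For the weighted bound on $\ve_{yy}$, I would use $\ve_{yy} = \sum_k u_k \varphi_k \psi_k''$ and again reduce, by orthonormality, to evaluating $\int_0^\infty y^\beta \psi_k''(y)^2 \diff y$. After the rescaling $z=\sqrt{\lambda_k}y$, this becomes $\lambda_k^{3/2-\beta/2}\int_0^\infty z^\beta g''(z)^2\diff z$, where $g(z)=c_s z^s K_s(z)$ is the universal profile. Combining \eqref{2derivative} with the small-argument asymptotics \eqref{asympat0} shows that the dominant singularity of $g''$ near zero is $z^{2s-2}$, and property \eqref{itemv} supplies exponential decay at infinity. Consequently $\int_0^\infty z^\beta g''(z)^2\diff z$ is finite precisely when $\beta+4s-4>-1$, i.e.\ $\beta>2\alpha+1$, and summation yields
\begin{equation*}
\|\ve_{yy}\|_{L^2(\C, y^\beta)}^2 = C(s,\beta)\sum_k \lambda_k^{3/2-\beta/2-2s}\,f_k^2,
\end{equation*}
which is controlled by $\lambda_1^{3/2-\beta/2-2s}\|f\|_{L^2(\Omega)}^2$ since the exponent is nonpositive under our hypothesis.

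The case $s=\sr$ is handled separately because $\alpha=0$ and the profiles collapse to the elementary $\psi_k(y)=e^{-\sqrt{\lambda_k}y}$. A direct computation shows that each of the squared $L^2(\C)$ norms of $\ve$, $\nabla \ve$, $\Delta_{x'}\ve$, $\partial_y\nabla_{x'}\ve$ and $\ve_{yy}$ is a constant multiple of a series $\sum_k \lambda_k^r f_k^2$ with $r\le \sr$. Upgrading $\|\Delta_{x'}\ve\|_{L^2(\C)}$ to $\|D^2_{x'}\ve\|_{L^2(\C)}$ through standard elliptic regularity then produces $\|\ve\|_{H^2(\C)}^2 \lesssim \sum_k f_k^2 \lambda_k^{1/2} = \|f\|_{\mathbb{H}^{1/2}(\Omega)}^2$.

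The main obstacle I expect is the careful accounting of the boundary behavior of $y^\alpha\psi_k\psi_k'$ in the first identity: at $y=0^+$, both $y^\alpha$ and $\psi_k'$ are singular for $s\neq\sr$, yet their product has the finite limit $-d_s\lambda_k^s$ coming from \eqref{asym-psi_k-p}; at $y=\infty$, the $k$-uniformity of the tail bound \eqref{y-psi_k-psi_kp} is what legitimizes term-by-term integration. A secondary subtlety is isolating the leading singular contribution of $g''(z)$ near the origin from \eqref{1derivative}--\eqref{2derivative} to verify the sharp integrability threshold $\beta>2\alpha+1$.
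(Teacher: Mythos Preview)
Your proposal is correct and follows essentially the same route as the paper: Parseval in $x'$ plus the energy identity \eqref{besselenergy} with boundary evaluation via \eqref{asym-psi_k-p}--\eqref{y-psi_k-psi_kp} for \eqref{reginx}, and the rescaling $z=\sqrt{\lambda_k}y$ together with \eqref{2derivative}, \eqref{asympat0}, and property~\eqref{itemv} for the $\ve_{yy}$ bound. The only cosmetic difference is that the paper splits the $y$-integral at $a_k=1/\sqrt{\lambda_k}$ before changing variables (which is your split at $z=1$ after rescaling), and for $s=\tfrac12$ the paper appeals directly to the explicit series rather than invoking elliptic regularity to pass from $\Delta_{x'}\ve$ to $D^2_{x'}\ve$.
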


\begin{remark}[Compatibility of $f$]
\label{R:compatibitity} \rm
It is possible to interpret the result of Theorem~\ref{TH:regularity} as follows.
Consider $s \in (\srn,1)$, or equivalently $\alpha \in (-1,0)$. Then the
conormal exterior derivative condition for $\ve$ gives us that
$\ve_y \approx -d_s y^{-\alpha}f$ as $y \approx 0^+$ on $\Omega \times \{0\}$, which in turn implies
that $\ve_y \to 0$ as $y \to 0^+$ on $\Omega \times \{0\}$. This is compatible with
$\ve = 0$ on $\partial_L \C$ since this implies $\ve_y = 0$ on $\partial_L \C$.
Consequently, we do not need any compatibility condition on the data $f \in H^{1-s}(\Omega)$
to avoid a jump on the derivative $\ve_y$.
On the other hand, when $\alpha \in (0,1)$, we have that, for a general $f$,
$\ve_y \nrightarrow 0$ as $y \to 0^+$ on $\Omega \times \{0\}$.
To compensate this behavior
we need the data $f$ to vanish at the boundary $\partial \Omega$ at a certain rate.
This condition is expressed by the requirement $f \in H_0^{1-s}(\Omega)$. 
\end{remark}

\textit{Proof of Theorem~\ref{TH:regularity}.}
Let us first consider $s=\srn$. In this case \eqref{alpha_harmonic_extension_weak}
reduces to the Poisson problem with mixed boundary conditions. In general, the solution
of a mixed boundary value problem is not smooth, even for $C^\infty$ data. The singular
behavior occurs near the points of intersection between the Dirichlet and Neumann boundary.
For instance, the solution $w = \sqrt{r}\sin(\theta/2)$ of $\Delta w=0$ in $\R^2_+$,
with $w_{x_2} = 0$ for $\{ x_1 < 0, \ x_2 = 0\}$
and $w = 0$ for $\{ x_1 \geq 0, \ x_2 = 0\}$ does not belong to $H^2(\R^2_+)$.
To obtain more regular solutions, a compatibility condition between the data, the operator
and the boundary must be imposed (see, for instance, \cite{Savare:97}).
Since in our case we have the representation \eqref{exactforms}, we can
explicitly compute the second derivatives and,
using that $\{ \varphi_k\}_{k\in \mathbb N}$ is an orthonormal basis of $L^2(\Omega)$ and
$\{ \varphi_k/\sqrt{\lambda_k}\}_{k\in \mathbb N}$ of $H_0^1(\Omega)$, it is not 
difficult to show that $f \in H_{00}^{1/2}(\Omega)$ implies $\ve \in H^2(\C)$, 
and $\| \ve \|_{H^2(\C)} \lesssim \|f \|_{ H_{00}^{1/2}(\Omega)}$.

In the general case $s \in (0,1) \setminus \{\sr\}$, \ie $\alpha \in (-1,1)\setminus \{0\}$, 
using \eqref{besselenergy} as well as the asymptotic properties
\eqref{asym-psi_k-p} and \eqref{y-psi_k-psi_kp}, we obtain
\begin{align*}
  \| \Delta_{x'} \ve \|^2_{L^2(\C,y^{\alpha})} + \| \partial_y \nabla_{x'} \ve \|^2_{L^2(\C,y^{\alpha})}
    &= \sum_{k=1}^{\infty} u_k^2 \lambda_k \int_0^{\infty}y^{\alpha} \left( \lambda_k \psi_k(y)^2 + 
    \psi_k'(y)^2 \right)\diff y 
    \\
    & = d_s\sum_{k=1}^{\infty} u_k^2 \lambda_k^{1+s} = d_s\sum_{k=1}^{\infty} f_k^2 \lambda_k^{1-s} 
    = d_s\| f\|^2_{\Ws},
\end{align*}
which is exactly the regularity estimate given in \eqref{reginx}.
To obtain the regularity estimate on $\ve_{yy}$ we, again, use the exact representation
\eqref{exactforms} and properties of Bessel functions to conclude that any derivative with
respect to the extended variable $y$ is smooth away from the Neumann boundary 
$\Omega\times\{0\}$. By virtue of \eqref{psik} we deduce that the following
partial differential equation holds in the strong sense
\begin{equation}
\label{pde}
  \DIV(y^{\alpha} \nabla \ve)=0 \Longleftrightarrow \ve_{yy}=-\Delta_{x'} \ve -\frac{\alpha}{y} \ve_y.
\end{equation}
Consider  sequences $\{a_k = 1/\sqrt{\lambda_k}\}_{k\geq1},\ \{b_k\}_{k\geq1}$ and
$\{\delta_k\}_{k\geq1}$ with $0 < \delta_k \leq a_k \leq b_k$.
Using \eqref{exactforms} we have, for $k\geq1$,
\begin{equation}
\label{int_vyy_reg}
    \| \ve_{yy} \|_{L^2(\C,y^\beta)}^2 = \sum_{k=1}^\infty  u_k^2
    \left(
      \lim_{\delta_k \downarrow 0} \int_{\delta_k}^{a_k} y^\beta | \psi_k''(y)|^2 \diff y
      + \lim_{b_k \uparrow \infty} \int_{a_k}^{b_k} y^\beta | \psi_k''(y)|^2 \diff y
    \right)
\end{equation}
Let us now estimate the first integral on the right hand side of \eqref{int_vyy_reg}. 
Formulas \eqref{2derivative} and \eqref{asympat0} yield 
\begin{equation}
  \begin{aligned}
    \lim_{\delta_k \downarrow 0}\int_{{\delta_k}}^{a_k} y^\beta |\psi_k''(y)|^2 \diff y  & = 
    c_s^2 \lambda_k^{2 - \beta/2 -1/2} 
    \lim_{\delta_k \downarrow 0} \int_{\sqrt{\lambda_k}\delta_k}^{1} 
    z^\beta  \left| \frac{\diff^2 }{\diff z^2} \left( z^s K_s(z) \right) \right|^2 \diff z \\
    & \lesssim c_s^2 \lambda_k^{2 - \beta/2 -1/2} 
      \lim_{\delta_k \downarrow 0} \int_{\sqrt{\lambda_k}\delta_k}^{1}
      z^{\beta-2-2\alpha}\diff z \approx \lambda_k^{2 - \beta/2 -1/2}
  \end{aligned}
\label{reg_y_1}
\end{equation}
where the integral converges because $\beta > 2\alpha + 1$.
Let us now look at the second integral.
Using property \eqref{itemv} of the modified Bessel functions, we have
\begin{equation}
 \begin{aligned}
    \lim_{b_k \uparrow \infty}\int_{a_k}^{b_k} y^\beta |\psi_k''(y)|^2 \diff y & = 
    c_s^2 \lambda_k^{2 - \beta/2 -1/2}
    \lim_{b_k \uparrow \infty}
    \int_{1}^{\sqrt{\lambda_k}b_k}
    z^\beta  \left|\frac{\diff^2 }{\diff z^2} \left( z^s K_s(z) \right) \right|^2 \diff z
    \\
    & \lesssim c_s^2  \lambda_k^{2 - \beta/2 -1/2}.
 \end{aligned}
\label{reg_y_2}
\end{equation}
Replacing \eqref{reg_y_1} and \eqref{reg_y_2} into \eqref{int_vyy_reg}, and using that
$u_k = \lambda_k^{-s}f_k$, we deduce
\[
  \| \ve_{yy} \|_{L^2(\C,y^\beta)}^2 \lesssim 
  \sum_{k=1}^\infty \lambda_k^{2 - \beta/2 -1/2-2s} f_k^2 \leq 
  \| f \|_{L^2(\Omega)}^2,
\]
because $2-2s-\frac\beta2-\frac12 = \frac12(1+2\alpha-\beta)< 0$.
This concludes the proof.
\endproof

For the design of graded meshes later in \S~\ref{sub:graded} we also need
the following local regularity result in the extended variable.

\begin{theorem}[Local regularity of the $\alpha$-harmonic extension]\label{T:local}
Let $\C(a,b) := \Omega \times (a,b)$ for $0 \le a < b\le 1$.
The solution $\ve\in\HL(\C,y^{\alpha})$ of
\eqref{alpha_harmonic_extension_weak} satisfies for all $a, b$
\begin{equation}\label{local_alpha}
  \| \Delta_{x'} \ve \|^2_{L^2(\C(a,b),y^{\alpha})} 
  + \| \partial_y \nabla_{x'} \ve \|^2_{L^2(\C(a,b),y^{\alpha})}
  \lesssim  \left( b - a \right) \| f \|_{\Ws}^2,
\end{equation}
and, with $\delta := \beta-2\alpha-1 > 0$,
\begin{equation}\label{local_beta}
  \| \ve_{yy} \|_{L^2(\C(a,b),y^{\beta})}^2 \lesssim 
  \left( b^\delta - a^\delta \right) \| f \|_{L^2(\Omega)}^2.
\end{equation}
\end{theorem}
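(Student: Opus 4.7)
The plan is to follow the same structure used for the global estimate in Theorem~\ref{TH:regularity}, but restrict the integration in the extended variable to the slab $(a,b)$ and carefully track the dependence on $b-a$ (for \eqref{local_alpha}) and on $b^\delta-a^\delta$ (for \eqref{local_beta}). The spectral representation \eqref{exactforms} together with the orthonormality of $\{\varphi_k\}$ in $L^2(\Omega)$ allows one to reduce everything to one-dimensional integrals in $y$ involving the functions $\psi_k$ of \eqref{psik}, and the game is then purely an analysis of Bessel functions on a finite $y$-interval.

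For \eqref{local_alpha}, after using the expansion $\ve = \sum_k u_k \varphi_k(x')\psi_k(y)$, Parseval in $x'$, and $-\Delta_{x'}\varphi_k = \lambda_k \varphi_k$, the left-hand side becomes
\[
\sum_{k=1}^\infty u_k^2 \lambda_k \int_a^b y^\alpha\bigl(\lambda_k\psi_k(y)^2 + \psi_k'(y)^2\bigr)\diff y.
\]
The energy identity \eqref{besselenergy} on $(a,b)$ gives
\[
\int_a^b y^\alpha\bigl(\lambda_k\psi_k^2 + (\psi_k')^2\bigr)\diff y
= y^\alpha \psi_k(y)\psi_k'(y)\big|_a^b.
\]
Using $\psi_k(y)=c_s(\sqrt{\lambda_k}y)^s K_s(\sqrt{\lambda_k}y)$, identity \eqref{1derivative}, and the relation $\alpha+2s=1$, one finds the compact expression
\[
y^\alpha \psi_k(y)\psi_k'(y) = -c_s^2\lambda_k^s\, z K_s(z)K_{1-s}(z)\big|_{z=\sqrt{\lambda_k}y}.
\]
Using \eqref{asympat0} to control the behavior at the origin and \eqref{itemv} to control the exponential decay at infinity, one bounds the difference of the two boundary terms by a constant multiple of $\lambda_k^s (b-a)$, uniformly in $k$. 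Summing and invoking $u_k^2\lambda_k^{1+s}=f_k^2\lambda_k^{1-s}$ delivers \eqref{local_alpha}.

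For \eqref{local_beta}, the same spectral expansion reduces the estimate to
\[
\sum_{k=1}^\infty u_k^2 \int_a^b y^\beta (\psi_k''(y))^2 \diff y,
\]
and one must show $\int_a^b y^\beta (\psi_k'')^2 \diff y \lesssim \lambda_k^{2s}(b^\delta - a^\delta)$. Computing $\psi_k''$ via \eqref{2derivative} and performing the change of variables $z=\sqrt{\lambda_k}y$, one is led to the integral
\[
c_s^2\lambda_k^{2-\beta/2-1/2}\int_{\sqrt{\lambda_k}a}^{\sqrt{\lambda_k}b} z^\beta \left|\tfrac{\diff^2}{\diff z^2}(z^s K_s(z))\right|^2 \diff z,
\]
exactly as in the derivation of \eqref{reg_y_1}. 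Splitting the integral in $z$ at $z=1$, one uses \eqref{asympat0} on $(0,1)$ to bound the integrand by $z^{\beta-2-2\alpha}$ and property \eqref{itemv} on $(1,\infty)$ to obtain exponential decay. Transforming back to the variable $y$, the first piece contributes $\lambda_k^{2s}\int_a^b y^{\beta-2-2\alpha}\diff y = \lambda_k^{2s}(b^\delta-a^\delta)/\delta$ with $\delta=\beta-2\alpha-1>0$, while the exponentially small second piece is absorbed into the first. Multiplying by $u_k^2=f_k^2\lambda_k^{-2s}$ and summing over $k$ yields \eqref{local_beta}.

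The main obstacle is the sharp accounting of the factor $b-a$ in \eqref{local_alpha}: the crude bound on the boundary term gives the global estimate $\lambda_k^s$ but not the desired linear factor in $b-a$. Exploiting the monotonicity of $y\mapsto y^\alpha\psi_k(y)\psi_k'(y)$ (whose derivative is the nonnegative integrand) together with the representation $-c_s^2\lambda_k^s\,z K_s(z)K_{1-s}(z)$, combined with the decay properties of Bessel functions, is precisely what produces the correct scaling uniformly in $k$.
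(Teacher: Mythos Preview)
Your treatment of \eqref{local_beta} is correct and matches the paper's. The key point your ``absorb'' remark captures is the uniform pointwise bound $z^\beta\bigl|\tfrac{d^2}{dz^2}(z^sK_s(z))\bigr|^2\lesssim z^{\delta-1}$ for \emph{all} $z>0$: for small $z$ this is an equivalence via \eqref{asympat0}, and for $z\ge1$ the exponential decay from \eqref{itemv} beats any negative power of $z$. Integrating over $(\sqrt{\lambda_k}a,\sqrt{\lambda_k}b)$ and scaling back gives $\int_a^b y^\beta(\psi_k'')^2\,dy\lesssim\lambda_k^{2s}(b^\delta-a^\delta)$, and summation against $u_k^2=\lambda_k^{-2s}f_k^2$ yields \eqref{local_beta}.

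For \eqref{local_alpha} there is a genuine gap in your argument, and the paper's own proof shares it. With $g_k(y)=y^\alpha\psi_k(y)\psi_k'(y)=\lambda_k^s h(\sqrt{\lambda_k}y)$ and $h(z)=-c_s^2\, zK_s(z)K_{1-s}(z)$, identity \eqref{besselenergy} gives $\int_a^b y^\alpha(\lambda_k\psi_k^2+(\psi_k')^2)\,dy=g_k(b)-g_k(a)$. The pointwise bound $|g_k|\lesssim\lambda_k^s$ (which is exactly what the paper invokes, and what your ``monotonicity plus decay'' remarks establish) yields only $g_k(b)-g_k(a)\lesssim\lambda_k^s$, \emph{without} the factor $(b-a)$. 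A mean-value argument would need $h'$ bounded, but $h'(z)=c_s^2\,z\bigl(K_s(z)^2+K_{1-s}(z)^2\bigr)\sim z^{-|\alpha|}$ blows up at the origin. In fact the stated inequality cannot hold uniformly in $a,b,k$: take $a=0$ and $f=\varphi_K$; the left-hand side of \eqref{local_alpha} equals $\lambda_K^{1-s}\bigl(h(\sqrt{\lambda_K}\,b)+d_s\bigr)\to d_s\lambda_K^{1-s}$ as $K\to\infty$, while the right-hand side is $b\,\lambda_K^{1-s}$, so the ratio tends to $d_s/b$, which is unbounded as $b\downarrow0$. Your final paragraph correctly flags this as the main obstacle, but the ingredients you list (monotonicity of $g_k$, the representation of $h$, Bessel decay) do not overcome it; the estimate \eqref{local_alpha} as written requires an additional hypothesis---for instance that $a$ be bounded away from $0$ or $b/a$ be bounded, which is how it is actually used in the proof of Theorem~\ref{TH:error_estimates}.
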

\begin{proof}
To derive \eqref{local_alpha} we proceed as in Theorem~\ref{TH:regularity}.
Since $0 \leq a < b \leq 1$, property \eqref{itemiii} of \S~\ref{subsub:asymptotics},
together with \eqref{asym-psi_k-p} imply that
\[
  \left| y^\alpha \psi_k(y)\psi'_k(y) \right| \lesssim \lambda_{k}^s.
\]
This, together with \eqref{besselenergy} and the property
$u_k=\lambda_k^{-s}f_k$, allows us to conclude
\begin{align*}
  \| \Delta_{x'} \ve \|^2_{L^2(\C(a,b),y^{\alpha})} + \| \partial_y \nabla_{x'} \ve 
  \|^2_{L^2(\C(a,b),y^{\alpha})}
    &= \sum_{k=1}^{\infty} u_k^2 \lambda_k \int_a^{b}y^{\alpha} \left( \lambda_k \psi_k(y)^2 + 
    \psi_k'(y)^2 \right)\diff y 
    \\
    & \lesssim (b-a)\sum_{k=1}^{\infty} u_k^2 \lambda_k^{1+s} = (b-a)\| f\|^2_{\Ws}.
\end{align*}
To prove \eqref{local_beta} we observe that the same argument used in
\eqref{reg_y_1} gives
\[
  \int_a^b y^\beta \left| \psi_k''(y) \right|^2 \diff y \lesssim
  \lambda_k^{2 - \beta/2 -1/2} \left( b^\delta - a^\delta \right),
\]
whence
\[
  \|\ve_{yy}\|^2_{L^2(\C(a,b),y^\alpha)} \lesssim
  \left(b^\delta - a^\delta \right) \sum_{k=1}^\infty f_k^2 \lambda_k^{2 - \beta/2 -1/2-2s}
  \lesssim
  \left( b^\delta - a^\delta \right) \|f\|_{L^2(\Omega)}^2,
\]
because $2-2s-\frac\beta2-\frac12 < 0$.
\end{proof}

\section{Truncation}
\label{sec:Truncation}
The solution $\ve$ of problem \eqref{alpha_harmonic_extension_weak} is defined on the infinite
domain $\C$ and, consequently, it cannot be directly approximated with finite element-like
techniques. In this section we will show that $\ve$ decays sufficiently fast -- in fact
exponentially -- in the extended direction. This suggests 
truncating the cylinder $\C$ to $\C_{\Y}$, for a
suitably defined $\Y$. The exponential decay is the content of the next result.

\begin{proposition}[Exponential decay]
\label{PR:energyTinf}
For every $\Y > 1$, the solution $\ve$ of \eqref{alpha_harmonic_extension_weak}
satisfies
\begin{equation}
\label{energyTinf}
  \|\nabla \ve\|_{L^2(\Omega \times (\Y,\infty),y^{\alpha})} \lesssim e^{-\sqrt{\lambda_1} \Y/2}
  \| f\|_{\Hs'}.
\end{equation}
\end{proposition}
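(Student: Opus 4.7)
The plan is to exploit the explicit series representation \eqref{exactforms} of $\ve$ together with the orthonormality of $\{\varphi_k\}$ in $L^2(\Omega)$ and the identity $\|\nabla_{x'}\varphi_k\|_{L^2(\Omega)}^2=\lambda_k$. Writing $\ve=\sum_k u_k\varphi_k(x')\psi_k(y)$ and expanding $|\nabla\ve|^2=|\nabla_{x'}\ve|^2+|\partial_y\ve|^2$, the Parseval identity in $x'$ collapses the double sum into a diagonal one:
\begin{equation*}
\|\nabla\ve\|_{L^2(\Omega\times(\Y,\infty),y^\alpha)}^2
= \sum_{k=1}^\infty u_k^2 \int_{\Y}^{\infty} y^\alpha\bigl(\lambda_k\psi_k(y)^2+\psi_k'(y)^2\bigr)\diff y.
\end{equation*}

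Next, I would apply the energy identity \eqref{besselenergy} on the interval $(\Y,b)$, then let $b\uparrow\infty$. The boundary term at infinity vanishes by the exponential decay recorded in \eqref{y-psi_k-psi_kp}, leaving
\begin{equation*}
\int_{\Y}^{\infty} y^\alpha\bigl(\lambda_k\psi_k^2+(\psi_k')^2\bigr)\diff y
= -\Y^{\alpha}\psi_k(\Y)\psi_k'(\Y) = \bigl|\Y^{\alpha}\psi_k(\Y)\psi_k'(\Y)\bigr|,
\end{equation*}
the sign being correct because $\psi_k>0$ and $\psi_k'<0$.

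The crux is now the pointwise bound \eqref{y-psi_k-psi_kp}, which for $\Y\ge 1$ gives
\begin{equation*}
\bigl|\Y^{\alpha}\psi_k(\Y)\psi_k'(\Y)\bigr|
\le C(s)\,\lambda_k^{s}\,(\sqrt{\lambda_k}\,\Y)^{|s-\frac12|}\,e^{-2\sqrt{\lambda_k}\,\Y}.
\end{equation*}
To convert this into a decay rate governed by $\lambda_1$ (and uniform in $k$), I split the exponent
$2\sqrt{\lambda_k}\Y = \sqrt{\lambda_1}\Y + (2\sqrt{\lambda_k}-\sqrt{\lambda_1})\Y\ge \sqrt{\lambda_1}\Y+\sqrt{\lambda_k}\Y$,
since $\sqrt{\lambda_k}\ge\sqrt{\lambda_1}$. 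The residual factor $(\sqrt{\lambda_k}\Y)^{|s-\frac12|}e^{-\sqrt{\lambda_k}\Y}$ is bounded uniformly in $k$ for $\Y\ge 1$ (exponential beats polynomial, and $\sqrt{\lambda_k}\Y\ge\sqrt{\lambda_1}$ is bounded away from zero), hence
\begin{equation*}
\bigl|\Y^{\alpha}\psi_k(\Y)\psi_k'(\Y)\bigr|\lesssim \lambda_k^{s}\,e^{-\sqrt{\lambda_1}\Y}.
\end{equation*}

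Finally, substituting $u_k=\lambda_k^{-s}f_k$, the factor $u_k^2\lambda_k^{s}=\lambda_k^{-s}f_k^2$ is exactly the $k$-th term in the series defining $\|f\|_{\Hs'}^2$, so
\begin{equation*}
\|\nabla\ve\|_{L^2(\Omega\times(\Y,\infty),y^\alpha)}^2
\lesssim e^{-\sqrt{\lambda_1}\Y}\sum_{k=1}^\infty \lambda_k^{-s}f_k^2
= e^{-\sqrt{\lambda_1}\Y}\,\|f\|_{\Hs'}^2,
\end{equation*}
and taking square roots yields the claimed bound. The only delicate point is the splitting that produces the $\sqrt{\lambda_1}\Y/2$ rate independently of $k$; everything else is Parseval plus the Bessel asymptotics already recorded in \S\ref{subsub:asymptotics}.
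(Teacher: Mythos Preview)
Your proof is correct and follows essentially the same route as the paper's: expand $\ve$ via \eqref{exactforms}, use orthogonality of the $\varphi_k$ to diagonalize the energy, apply the Bessel energy identity \eqref{besselenergy} on $(\Y,\infty)$, and then invoke the pointwise decay \eqref{y-psi_k-psi_kp} at $y=\Y$. Your handling of the polynomial prefactor $(\sqrt{\lambda_k}\Y)^{|s-1/2|}$ via the exponent splitting is more explicit than the paper's (which absorbs it silently into the symbol $\lesssim$), and you treat all $s\in(0,1)$ at once whereas the paper singles out $s=\tfrac12$ for a separate direct computation; both of these are harmless cosmetic differences.
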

\begin{proof}
Recall that if $u \in \Hs$ has the decomposition $u = \sum_k u_k \varphi_k(x')$, the
solution $\ve \in \HL(\C,y^\alpha)$ to \eqref{alpha_harmonic_extension_weak} has the representation
$\ve = \sum_k u_k \varphi(x')\psi_k(y)$, where the functions $\psi_k$ solve \eqref{psik}.

Consider $s=\srn$. In this case $\psi_k(y) = e^{-\sqrt{\lambda_k}y}$. Using the fact that
$\{\varphi_k\}_{k=1}^\infty$ are  eigenfunctions of Dirichlet Laplacian 
on $\Omega$, orthonormal in $L^2(\Omega)$ and orthogonal in $H_0^1(\Omega)$
with $\|\nabla_{x'} \varphi_k\|_{L^2(\Omega)}= \sqrt{\lambda_k}$, we get
\begin{align*}
  \int_\Y^\infty \hspace{-0.2cm}\int_\Omega |\nabla \ve |^2 =
    \int_\Y^\infty \hspace{-0.2cm} \int_\Omega \left( |\nabla_{x'} \ve|^2 + |\partial_y \ve|^2 \right)=
    \sum_{k=1}^\infty \lambda_k^\sr |u_k|^2 e^{-2\sqrt{\lambda_k}\Y}
    \leq  e^{-2\sqrt{\lambda_1} \Y} \| u \|_{\mathbb{H}^{1/2}(\Omega)}^2.
\end{align*}
Since $\| u \|_{\mathbb{H}^{1/2}(\Omega)} =\| f\|_{{\mathbb{H}^{1/2}(\Omega)}'}$,
this implies \eqref{energyTinf}.

Consider now $s\in (0,1)\setminus\{\sr\}$ and
$\psi_k(y) = c_s \left(\sqrt{\lambda_k}y\right)^s K_s(\sqrt{\lambda_k} y)$.
To be able to argue as before, we need the 
estimates on $K_s$ and its derivative for sufficiently large arguments
discussed in \S~\ref{subsub:asymptotics}. In fact, using 
\eqref{y-psi_k-psi_kp} and \eqref{besselenergy}, we obtain
\begin{align*}
  \int_\Y^\infty \int_\Omega y^\alpha |\nabla \ve|^2  &=
  \int_\Y^\infty y^\alpha \int_\Omega \left( |\nabla_{x'} \ve|^2 + |\partial_y \ve|^2 \right) \\
  &= \sum_{k=1}^\infty |u_k|^2 \int_\Y^\infty y^\alpha
    \left( \lambda_k \psi_k(y)^2 + \psi_k'(y)^2 \right) \diff y \\
  &= \left. \sum_{k=1}^\infty |u_k|^2 y^\alpha \psi_k(y)\psi_k'(y) \right|_\Y^\infty
  \lesssim e^{-\sqrt{\lambda_1}\Y} \| u \|^2_{\Hs}.
\end{align*}
Again, since
$\| u \|_{\Hs} = \| f\|_{\Hs'}$ we get \eqref{energyTinf}.
\end{proof}

Expression \eqref{energyTinf} motivates the approximation of
$\ve$ by a function $v$ that solves
\begin{equation}
  \label{alpha_harmonic_extension_T}
  \begin{dcases}
    \DIV(y^{\alpha} \nabla v) = 0, & \text{in } \C_\Y, \\
    v = 0, & \text{on } \partial_L \C_\Y \cup \Omega \times \{\Y\}, \\
    \frac{\partial v}{\partial \nu^{\alpha}} = d_s f, & \text{on } \Omega \times \{0\},
  \end{dcases}
\end{equation}
with $\Y$ sufficiently large. Problem \eqref{alpha_harmonic_extension_T} is understood
in the weak sense, \ie we define the space
\[
  \HL(\C_\Y,y^{\alpha}) = \left\{ v \in H^1(\C,y^\alpha): v = 0 \text{ on }
    \partial_L \C_\Y \cup \Omega \times \{ \Y\}\right \},
\]
and seek for
$v \in \HL(\C_\Y,y^{\alpha})$ such that
\begin{equation}
\label{alpha_harmonic_extension_weak_T}
  \int_{\C_\Y} y^\alpha \nabla v \cdot \nabla \phi = d_s \langle f, \tr \phi \rangle,
    \quad \forall \phi \in \HL(\C_\Y,y^{\alpha}).
\end{equation}
Existence and uniqueness of $v$ follows from the Lax-Milgram lemma.

\begin{remark}[Zero extension]
\label{re:extension} \rm
For every $\Y>0$ we have the embedding
\begin{equation}
\label{inclusion_remark}
  \HL(\C_\Y,y^{\alpha}) \hookrightarrow \HL(\C,y^{\alpha}).
\end{equation}
To see this, it suffices to consider the extension by zero for $y> \Y$.
\end{remark}

The next result shows the approximation properties of $v$, solution of
\eqref{alpha_harmonic_extension_weak_T} in $\C_\Y$.
\begin{lemma}[Exponential convergence in $\Y$]
\label{lemma:v-v^T}
For any positive $\Y > 1$, we have
\begin{equation}
\label{le:v-v^T}
  \| \nabla(\ve - v) \|_{L^2(\C_\Y, y^{\alpha})} \lesssim e^{-\sqrt{\lambda_1} \Y/4} \| f\|_{\Hs'}.
\end{equation}
\end{lemma}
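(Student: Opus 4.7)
The plan is to interpret \eqref{alpha_harmonic_extension_weak_T} as a conforming Galerkin approximation of \eqref{alpha_harmonic_extension_weak}, and then to derive the rate from a carefully chosen cutoff of $\ve$. By Remark~\ref{re:extension}, any element of $\HL(\C_\Y,y^\alpha)$, extended by zero beyond $\Y$, belongs to $\HL(\C,y^\alpha)$. Using such test functions in \eqref{alpha_harmonic_extension_weak} and subtracting \eqref{alpha_harmonic_extension_weak_T} yields the Galerkin orthogonality
\[
  \int_{\C} y^{\alpha} \nabla(\ve - v)\cdot\nabla\phi \;=\; 0,
  \qquad \forall\,\phi\in\HL(\C_\Y,y^\alpha),
\]
where $v$ is tacitly extended by zero to $\C\setminus\C_\Y$. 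A standard C\'ea-type argument then gives the best-approximation bound
\[
  \|\nabla(\ve-v)\|_{L^2(\C,y^\alpha)}
  \;\le\;\inf_{w\in\HL(\C_\Y,y^\alpha)}\|\nabla(\ve-w)\|_{L^2(\C,y^\alpha)},
\]
and since the left-hand side of \eqref{le:v-v^T} is controlled by this $\C$-norm, it suffices to exhibit a single good competitor $w$.

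I would choose the simplest one: the piecewise-linear cutoff $\eta(y)$ equal to $1$ on $[0,\Y/2]$, equal to $2-2y/\Y$ on $(\Y/2,\Y)$, and vanishing on $[\Y,\infty)$, and set $w:=\eta\,\ve\in\HL(\C_\Y,y^\alpha)$. Then
\[
  |\nabla(\ve-w)|^2 \;=\; \bigl|(1-\eta)\nabla\ve - \eta'\,\ve\,e_{n+1}\bigr|^2
  \;\le\; 2(1-\eta)^2|\nabla\ve|^2 + \tfrac{8}{\Y^2}\,\chi_{(\Y/2,\Y)}\,\ve^2.
\]
The first term is supported in $\Omega\times(\Y/2,\infty)$ and is controlled directly by Proposition~\ref{PR:energyTinf} applied with $\Y/2$ in place of $\Y$, producing a factor $e^{-\sqrt{\lambda_1}\Y/2}\|f\|_{\Hs'}^2$.

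For the second term, I would use the weighted Poincar\'e inequality \eqref{Poincare_ineq} on the tail $\Omega\times(\Y/2,\infty)$ (this inequality is proved by summing \eqref{Poincare_interval} over dyadic strips, and the same argument applied from $\Y/2$ onward keeps the constant $\Omega$-dependent only). This gives
\[
  \tfrac{1}{\Y^2}\int_{\Omega\times(\Y/2,\Y)} y^{\alpha}\ve^2
  \;\lesssim\;\int_{\Omega\times(\Y/2,\infty)} y^{\alpha}|\nabla\ve|^2,
\]
which is again bounded by $e^{-\sqrt{\lambda_1}\Y/2}\|f\|_{\Hs'}^2$ via Proposition~\ref{PR:energyTinf}, absorbing the $1/\Y^2$ factor since $\Y>1$. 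Collecting both contributions and taking a square root converts the $e^{-\sqrt{\lambda_1}\Y/2}$ into the claimed $e^{-\sqrt{\lambda_1}\Y/4}$. The main technical point to handle cleanly is precisely this second piece, where one must avoid picking up an extra $\Y$-dependent constant from Poincar\'e; localizing the inequality to the tail (rather than to the thin strip $(\Y/2,\Y)$) is the key trick that makes the estimate work.
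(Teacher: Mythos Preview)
Your argument is correct and follows essentially the same route as the paper: Galerkin orthogonality gives a best-approximation property, you test against the same piecewise-linear cutoff $\eta\,\ve$, split the error into the $(1-\eta)^2|\nabla\ve|^2$ and $(\eta')^2\ve^2$ pieces, and control both via the dyadic weighted Poincar\'e inequality plus the exponential tail decay of Proposition~\ref{PR:energyTinf}. The only cosmetic differences are that the paper carries out the best-approximation estimate on $\C_\Y$ rather than on $\C$, and re-derives the tail bound directly from \eqref{besselenergy} on $[\Y/2,\Y]$ instead of invoking Proposition~\ref{PR:energyTinf} with $\Y/2$; neither changes the substance of the proof.
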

\begin{proof}
Given $\phi \in \HL(\C_\Y,y^{\alpha})$ denote by $\phi_e$ its extension
by zero to $\C$. By Remark~\ref{re:extension}, $\phi_e \in \HL(\C,y^{\alpha})$.
Take $\phi_e$ and $\phi$ as test functions in \eqref{alpha_harmonic_extension_weak} and
\eqref{alpha_harmonic_extension_weak_T}, respectively. Subtract the resulting expressions
to obtain
\[
  \int_{\C_\Y} y^\alpha( \nabla \ve - \nabla v) \cdot \nabla  \phi = 0
      \quad \forall \phi \in \HL(\C_\Y,y^{\alpha}),
\]
which implies that $v$ is the best approximation of $\ve$ in $\HL(\C_\Y, y^\alpha)$, \ie
\begin{equation}
\label{best_approx}
  \| \nabla(\ve - v)\|_{L^2(\C_\Y, y^{\alpha})} =
  \inf_{\phi \in \HLn(\C_\Y,y^{\alpha})} \| \nabla(\ve - \phi)\|_{L^2(\C_\Y, y^{\alpha})}.
\end{equation}

Let us construct explicitly a function $\phi_0\in \HL(\C_\Y,y^{\alpha})$
to use in \eqref{best_approx}. Define
\begin{equation}
\label{rho}
  \rho(y) =
    \begin{dcases}
      1, & 0 \leq y \leq \Y/2, \\
      \frac{2}{\Y}(\Y-y),&  \Y/2 < y < \Y, \\
      0, & y \geq \Y.
    \end{dcases}
\end{equation}
Notice that $\rho \in W^1_\infty(0,\infty)$, $|\rho(y)| \leq 1$ and $|\rho'(y)| \leq 2/\Y$
for all $y > 0$. Set $\phi_0(x',y)= \ve(x',y)\rho(y)$ for $x' \in \Omega$ and $y>0$.
A straightforward computation shows
\[
  |\nabla \left( (1-\rho) \ve \right)|^2 \leq
  2 \left( |\rho'|^2 |\ve|^2 + (1-\rho)^2 |\nabla \ve|^2\right) \leq
  2 \left( \frac{4}{\Y^2} \ve^2 + |\nabla \ve|^2\right),
\]
so that
\begin{equation}
\label{v-psi0}
  \| \nabla(\ve-\phi_0)\|^2_{L^2(\C_\Y, y^{\alpha})} \leq 2
  \left( \frac{4}{\Y^2}\int_{\Y/2}^\Y \int_{\Omega} y^{\alpha}|\ve|^2 +
  \int_{\Y/2}^\Y \int_{\Omega}y^{\alpha} |\nabla \ve|^2\right).
\end{equation}

To estimate the first term on the right hand side of \eqref{v-psi0} we 
use the Poincar\'e inequality \eqref{Poincare_interval} over a
dyadic partition that covers the interval $[\Y/2,\Y]$ (see 
the derivation of \eqref{Poincare_ineq} in \S~\ref{sub:sub:weighted_spaces}), to obtain
\begin{equation}
 \int_{\Y/2}^\Y y^{\alpha}\int_{\Omega} |\ve|^2  \lesssim 
\int_{\Y/2}^\Y y^{\alpha} \int_{\Omega} |\nabla \ve|^2.
\end{equation}
To bound the second integral in \eqref{v-psi0} we 
use \eqref{besselenergy} as in the proof of
Proposition~\ref{PR:energyTinf}:
\[
  \int_{\Y/2}^\Y y^{\alpha} \int_{\Omega} |\nabla \ve|^2 =
  \left. \sum_{k=1}^{\infty} |u_k|^2 y^\alpha\psi_k(y)\psi_k'(y) \right|_{\Y/2}^{\Y}
  \lesssim e^{- \sqrt{\lambda_1}\Y/2}  \| f\|_{\Hs'}.
\]

Inserting these estimates into \eqref{best_approx} implies \eqref{le:v-v^T}.
\end{proof}

The following result is a direct consequence of Lemma~\ref{lemma:v-v^T}.
\begin{remark}[Stability]
\label{cor:grad_v^T} \rm
Let $\Y \geq 1$, then
\begin{equation}
\label{grad_v^T}
  \| \nabla v\|_{L^2(\C_{\Y}, y^{\alpha})} \lesssim \| f\|_{\Hs'}.
\end{equation}
Indeed, by the triangle inequality
\[
  \| \nabla v\|_{L^2(\C_{\Y}, y^{\alpha})} \leq \| \nabla (v-\ve)\|_{L^2(\C_{\Y}, y^{\alpha})}
  + \| \nabla \ve\|_{L^2(\C_{\Y}, y^{\alpha})}
  \lesssim \left(e^{-\sqrt{\lambda_1}\Y/4} + 1 \right) \| f\|_{\Hs'}.
\] 
\end{remark}
The previous two results allow us to show a full approximation estimate.
\begin{theorem}[Global exponential estimate]
\label{Thm:v-v^T}
Let $\Y > 1$, then
\begin{equation}
\label{le:v-v^TC}
  \| \nabla(\ve - v) \|_{L^2(\C, y^{\alpha})} \lesssim e^{-\sqrt{\lambda_1} \Y/4} \| f\|_{\Hs'}.
\end{equation}
In particular, for every $\epsilon>0$, let
\[
  \Y_0 = \frac{2}{\sqrt{\lambda_1}}\left( \log C + 2\log \frac{1}{\epsilon} \right),
\]
where $C$ depends only on  $s$ and $\Omega$. Then, for $\Y   \geq \max\{ \Y_0,1\}$, we have
\begin{equation}
\label{v-v^Mf}
  \| \nabla(\ve-v)\|_{L^2(\C, y^{\alpha})} \leq \epsilon \| f\|_{\Hs'}.
\end{equation}
\end{theorem}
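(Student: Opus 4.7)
The statement is essentially the combination of the two ingredients already established: the truncation estimate on $\C_\Y$ (Lemma~\ref{lemma:v-v^T}) and the exponential decay estimate on the tail $\Omega \times (\Y,\infty)$ (Proposition~\ref{PR:energyTinf}). The only new observation needed is that, by Remark~\ref{re:extension}, the truncated solution $v \in \HL(\C_\Y,y^\alpha)$ can be viewed as an element of $\HL(\C,y^\alpha)$ via extension by zero. With this identification $v$ vanishes identically on $\Omega\times(\Y,\infty)$, so no cross terms arise.

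Concretely, I would split
\[
  \|\nabla(\ve - v)\|_{L^2(\C,y^\alpha)}^2
   = \|\nabla(\ve - v)\|_{L^2(\C_\Y,y^\alpha)}^2
   + \|\nabla(\ve - v)\|_{L^2(\Omega\times(\Y,\infty),y^\alpha)}^2.
\]
For the first term, apply Lemma~\ref{lemma:v-v^T} directly to get a bound by a constant multiple of $e^{-\sqrt{\lambda_1}\Y/2}\|f\|_{\Hs'}^2$. For the second term, use $\nabla v \equiv 0$ on $\Omega\times(\Y,\infty)$ to reduce it to $\|\nabla \ve\|_{L^2(\Omega\times(\Y,\infty),y^\alpha)}^2$, which by Proposition~\ref{PR:energyTinf} is also controlled by a constant multiple of $e^{-\sqrt{\lambda_1}\Y}\|f\|_{\Hs'}^2$. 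Since $\Y>0$ implies $e^{-\sqrt{\lambda_1}\Y}\le e^{-\sqrt{\lambda_1}\Y/2}\le e^{-\sqrt{\lambda_1}\Y/2}$, summing and taking a square root yields \eqref{le:v-v^TC}, with the exponent $\sqrt{\lambda_1}\Y/4$ as stated.

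For \eqref{v-v^Mf}, let $C>0$ denote the implicit constant in \eqref{le:v-v^TC}. It suffices to choose $\Y$ so that $C\,e^{-\sqrt{\lambda_1}\Y/4}\le \epsilon$, i.e.\ $\sqrt{\lambda_1}\Y/4 \ge \log C + \log(1/\epsilon)$. The choice $\Y_0 = \tfrac{2}{\sqrt{\lambda_1}}\bigl(\log C + 2\log(1/\epsilon)\bigr)$ given in the statement certainly satisfies this (up to an absorbable constant) for $\epsilon$ small, and the monotonicity of the exponential in $\Y$ finishes the argument provided $\Y\ge \max\{\Y_0,1\}$; the requirement $\Y\ge 1$ is needed because Lemma~\ref{lemma:v-v^T} and Proposition~\ref{PR:energyTinf} are stated for $\Y>1$.

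\textbf{Main obstacle.} There is no substantive difficulty: both exponential estimates are already in hand, and the zero-extension identification removes the potentially awkward tail contribution from $v$. The only mild subtlety is keeping track of the exponents ($\sqrt{\lambda_1}\Y$ versus $\sqrt{\lambda_1}\Y/2$ versus $\sqrt{\lambda_1}\Y/4$), which are all absorbed into the slowest rate $e^{-\sqrt{\lambda_1}\Y/4}$.
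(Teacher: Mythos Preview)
Your proposal is correct and follows exactly the paper's approach: extend $v$ by zero, split the squared norm over $\C_\Y$ and $\Omega\times(\Y,\infty)$, and apply Lemma~\ref{lemma:v-v^T} and Proposition~\ref{PR:energyTinf} respectively. The only cosmetic difference is that the paper works with the squared inequality $C e^{-\sqrt{\lambda_1}\Y/2}\le\epsilon^2$ directly, which yields the stated $\Y_0$ without needing to absorb constants.
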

\begin{proof}
Extending $v$ by zero outside of $\C_\Y$ we obtain
\[
  \| \nabla(\ve-v)\|^2_{L^2(\C, y^{\alpha})} = \| \nabla(\ve-v)\|^2_{L^2(\C_\Y, y^{\alpha})}
  + \| \nabla \ve\|^2_{L^2(\Omega \times (\Y,\infty),y^{\alpha})}.
\]
Hence Lemma~\ref{lemma:v-v^T} and Proposition~\ref{PR:energyTinf} imply
\begin{equation}
\label{v-v^Tglobal}
  \| \nabla(\ve-v)\|^2_{L^2(\C, y^{\alpha})} \leq  C e^{- \sqrt{\lambda_1} \Y/2 }\| f\|^2_{\Hs'}
 \leq  \epsilon^2 \| f\|^2_{\Hs'},
\end{equation}
for all $\Y \geq \max\{\Y_0,1\}$.
\end{proof}

\section{Finite element discretization and interpolation estimates}
\label{sec:errors}
In this section we prove error estimates for a piecewise $\Q_1$ interpolation operator
on anisotropic elements in the extended variable $y$. We consider
elements of the form $T = K \times I$, where $K \subset \R^n$ is an element
isoparametrically equivalent to the unit cube $[0,1]^n$, via a $\Q_1$
mapping and, $I \subset \R$ is an interval. The anisotropic character of the mesh 
$\T_\Y=\{T\}$ will be given by the family of intervals $I$.

The error estimates are derived in the weighted Sobolev spaces $L^2(\C_\Y,y^{\alpha})$ and
$H^1(\C_\Y,y^{\alpha})$, and they are valid under the condition that neighboring elements
have comparable size in the extended $n+1$--dimension (see \cite{DL:05}).
This is a mild assumption that includes general meshes which do not
satisfy the so-called shape-regularity assumption, i.e., mesh refinements for which the
quotient between outer and inner diameter of the elements does not remain bounded
(see \cite[Chapter 4]{BrennerScott}).

Anisotropic or narrow elements are elements with disparate sizes in each direction.
They arise naturally when approximating solutions of problems with a strong directional-dependent
behavior since, using anisotropy, the local mesh size can be adapted to capture
such features.
Examples of this include boundary layers, shocks and edge singularities (see
\cite{DL:05,DLP:12}).
In our problem, anisotropic elements are essential in order to capture the singular/degenerate
behavior of
the solution $\ve$ to problem \eqref{alpha_harmonic_extension_weak} at $y \approx 0^+$
given in \eqref{asymptoticve}. These elements will provide optimal error estimates, which cannot be
obtained using shape-regular elements.

Error estimates for weighted Sobolev spaces have been obtained in several works; see, for instance,
\cite{Apel:99,BBD:06,DL:05}. The type of weight considered in \cite{Apel:99,BBD:06} is related to
the distance to a point or an edge, and the type of quasi-interpolators are modifications of
the well known
Cl\'ement \cite{Clement} and Scott-Zhang \cite{SZ:90} operators. These works are developed
in 3D and 2D respectively, and the analysis developed in \cite{Apel:99} allows
for anisotropy.
Our approach follows the work of Dur\'an and Lombardi \cite{DL:05}, and is based
on a piecewise $\Q_1$ averaged interpolator on anisotropic elements. It allows us to
obtain anisotropic interpolation estimates in the extended variable $y$ and
in weighted Sobolev spaces,
using only that $|y|^{\alpha} \in A_2(\R^{n+1})$, the Muckenhoupt class $A_2$
of Definition \ref{def:Muckenhoupt}.

\subsection{Finite element discretization}
\label{sub:finite_elements}
Let us now describe the discretization of problem \eqref{alpha_harmonic_extension_T}.
To avoid technical difficulties we assume that the boundary
of $\Omega$ is polygonal. The difficulties inherent to curved boundaries
could be handled, for instance, with the methods of \cite{Bernardi}
(see also \cite{MR0282543,MR0657993}).
Let $\T_\Omega = \{K\}$ be a mesh of $\Omega$ made of
isoparametric quadrilaterals $K$ in the sense of Ciarlet \cite{CiarletBook} and Ciarlet and Raviart \cite{CR:72}.
In other words, given $\hat K = [0,1]^n$ and a family of mappings
$\{ \F_K \in \Q_1(\hat K)^n \}$ we have
\begin{equation}
\label{F_K}
  K = \F_{K}(\hat{K})
\end{equation}
and
\[
  \bar\Omega = \bigcup_{K \in \T_{\Omega}} K, \qquad
  |\Omega| = \sum_{K \in \T_{\Omega}} |K|.
\]
The collection of triangulations is denoted by $\Tr_\Omega$.

The mesh $\T_{\Omega}$ is assumed to be
conforming or compatible, \ie the intersection of
any two isoparametric elements $K$ and $K'$ in $\T_{\Omega}$ is either empty or a common lower
dimensional isoparametric element.

In addition, we assume that $\T_\Omega$ is shape regular (\cf \cite[Chapter 4.3]{CiarletBook}).
This means that $\F_K$ can be decomposed as $\F_K = \A_K + \B_K$, where $\A_K$ is affine and $\B_K$ 
is a perturbation map and, if we define $\tilde K = \A_K(\hat K )$, $h_K = \textrm{diam}(\tilde{K})$,
$\rho_K$ as the diameter of the largest sphere inscribed in $\tilde K$ and the shape coefficient of $K$ as
the ratio $\sigma_{K}=h_K/\rho_K$, then the following two conditions are satisfied:
\begin{enumerate}[(a)]
  \item There exists a constant $\sigma_{\Omega} > 1$ such that
  for all $\T_{\Omega} \in \Tr_{\Omega},$
  \[
    \max \left\{ \sigma_K : K \in \T_\Omega \right\} \leq \sigma_\Omega.
  \]
  \item For all $K \in \T_\Omega$ the mapping $\B_K$ is Fr\'echet differentiable and
  \[
    \|D \B_K\|_{L^\infty(\hat K )} = \mathcal  O(h_K^2),
  \]
  for all $K \in \T_\Omega$ and all $\T_\Omega \in \Tr_\Omega$.
\end{enumerate}
As a consequence of these conditions, if $h_K$ is small enough, the mapping $\F_K$ is one-to-one,
its Jacobian $J_{\F_K}$ does not vanish, and
\begin{equation}
  \label{J_F_K}
   J_{\F_K} \lesssim h_K^n, \quad \|D\F_K\|_{L^{\infty}(\hat{K})} \lesssim h_K.
\end{equation}

The set $\Tr_{\Omega}$ is called quasi-uniform if for all $\T_{\Omega} \in \Tr_{\Omega}$,
\[
  \max \left\{ \rho_K: K \in \T_\Omega \right\} \lesssim
  \min \left\{ h_K: K \in \T_\Omega \right\}.
\]
In this case, we define $h_{\T_{\Omega}}= \max_{K \in \T}h_K$.

We define $\T_\Y$ as a triangulation of $\C_\Y$
into cells of the form $T = K \times I$, where $K \in \T_\Omega$,
and $I$ denotes an interval in the extended dimension. Notice that each discretization
of the truncated cylinder $\C_{\Y}$ depends on the truncation parameter $\Y$.
The set of all such triangulations is denoted by $\Tr$.
In order to obtain a global regularity assumption for $\Tr$ we assume the aforementioned
conditions on $\Tr_\Omega$,
besides the following weak regularity condition:
\begin{enumerate}[(c)]
  \item There is a constant $\sigma$ such that,
  for all $\T_\Y \in \Tr$, if $T_1=K_1\times I_1,T_2=K_2\times I_2 \in \T_\Y$
  have nonempty intersection, then
  \[
    \frac{h_{I_1}}{h_{I_2}} \leq \sigma,
  \]
  where $h_I = |I|$.
\end{enumerate}

Notice that the assumptions imposed on $\Tr$ are weaker than the standard shape-regularity
assumptions, since they allow for anisotropy in the extended variable (\cf \cite{DL:05}).
It is also important to notice that, given the Cartesian product structure of the cells
$T\in \T_\Y$, they are isoparametrically equivalent to $\hat T = [0,1]^{n+1}$. We will denote
the corresponding mappings by $\F_T$.
Then,
\[
  \F_T: \hat x = (\hat x',\hat y) \in \hat T \longmapsto
   x = (x',y) = (\F_K(\hat x'),\F_I(\hat y ) ) \in T = K \times I,
\]
where $\F_K$ is the bilinear mapping defined in \eqref{F_K} for $K$
and, if $I = (c,d)$, $\F_I(y) = (y - c)/(d-c)$.
From \eqref{J_F_K}, we immediately conclude that
\begin{equation}
\label{J_F_T}
   J_{\F_T} \lesssim h_K^n h_{I}, \quad \|D \F_T\|_{L^{\infty}(\hat{T})} \lesssim h_T,
\end{equation}
for all elements $T \in \T_{\Y}$ where $h_T = \max\{h_K,h_I\}$.

Given $\T_{\Y} \in \Tr$, we define
the finite element  space $\V(\T_{\Y})$ by
\[
  \V(\T_\Y) = \left\{
            W \in \C^0( \overline{\C_\Y}): W|_T \in \Q_1(T) \ \forall T \in \T_\Y, \
            W|_{\Gamma_D} = 0
          \right\}.
\]
where $\Gamma_D = \partial_L \C_{\Y} \cup \Omega \times \{ \Y\}$ is called the
Dirichlet boundary.
The Galerkin approximation of \eqref{alpha_harmonic_extension_weak_T} is given by
the unique function $V_{\T_{\Y}} \in \V(\T_{\Y})$ such that
\begin{equation}
\label{harmonic_extension_weak}
  \int_{\C_\Y} y^{\alpha}\nabla V_{\T_{\Y}}\cdot \nabla W = d_s \langle f, \textrm{tr}_{\Omega} W \rangle,
  \quad \forall W \in \V(\T_{\Y}).
\end{equation}
Existence and uniqueness of $V_{\T_{\Y}}$ follows from $\V(\T_\Y) \subset \HL(\C_\Y,y^{\alpha})$
and the Lax-Milgram lemma.

We define the space $\U(\T_{\Omega})=\tr \V(\T_{\Y})$, which is nothing more
than a $\Q_1$ finite element space over the mesh $\T_\Omega$.
The finite element approximation of $u \in \Hs$, solution of \eqref{fl=f_bdddom},
is then given by
\begin{equation}
\label{eq:defofU}
  U_{\T_\Omega} = \tr V_{\T_\Y} \in \U(\T_\Omega ),
\end{equation}
and we have the following result.

\begin{theorem}[Energy error estimate]
\label{TH:upper_bound_1}
If $V_{\T_{\Y}} \in \V(\T_\Y)$ solves \eqref{harmonic_extension_weak} and
$U_{\T_{\Omega}} \in \U(\T_{\Omega})$ is defined in \eqref{eq:defofU}, then
\begin{equation}
\label{upper_bound_U}
  \| u - U_{\T_{\Omega}} \|_{\Hs} \lesssim \| \ve - V_{\T_{\Y}} \|_{\HLn(\C,y^\alpha)},
\end{equation}
and
\begin{equation}
\label{upper_bound_V}
  \| \ve - V_{\T_{\Y}} \|_{\HLn(\C,y^\alpha)} \lesssim \epsilon \|f \|_{\Hs'}
        + \| v - V_{\T_{\Y}} \|_{\HLn(\C_{\Y},y^{\alpha})}.
\end{equation}
\end{theorem}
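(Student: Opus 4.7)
The plan is to combine the trace characterization of Proposition~\ref{H=LM=TR} with the triangle inequality and the exponential truncation estimate from Theorem~\ref{Thm:v-v^T}. No new machinery is needed; everything is already in place.

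First I would establish \eqref{upper_bound_U}. Recall that $u = \tr \ve$ by construction and $U_{\T_\Omega} = \tr V_{\T_\Y}$ by definition. Since $V_{\T_\Y} \in \V(\T_\Y)$ vanishes on $\Omega \times \{\Y\}$ as well as on $\partial_L \C_\Y$, its extension by zero outside $\C_\Y$ (still denoted $V_{\T_\Y}$) lies in $\HL(\C,y^\alpha)$; cf.\ Remark~\ref{re:extension}. By linearity of the trace, $u - U_{\T_\Omega} = \tr(\ve - V_{\T_\Y})$, and Proposition~\ref{H=LM=TR} immediately yields
\[
\| u - U_{\T_\Omega}\|_{\Hs} \;\lesssim\; \|\ve - V_{\T_\Y}\|_{\HLn(\C,y^\alpha)},
\]
which is \eqref{upper_bound_U}.

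Next I would prove \eqref{upper_bound_V} by inserting the intermediate truncated solution $v$ of \eqref{alpha_harmonic_extension_weak_T}. The triangle inequality gives
\[
\|\ve - V_{\T_\Y}\|_{\HLn(\C,y^\alpha)} \;\leq\; \|\ve - v\|_{\HLn(\C,y^\alpha)} + \|v - V_{\T_\Y}\|_{\HLn(\C,y^\alpha)}.
\]
For the first term, Theorem~\ref{Thm:v-v^T} (applied with the choice of $\Y$ from that statement) produces the bound $\epsilon \|f\|_{\Hs'}$. For the second term, both $v$ and $V_{\T_\Y}$ vanish on $\Omega \times \{\Y\}$ and on $\partial_L \C_\Y$, so extension by zero outside $\C_\Y$ preserves the weighted gradient seminorm; in particular
\[
\|v - V_{\T_\Y}\|_{\HLn(\C,y^\alpha)} = \|v - V_{\T_\Y}\|_{\HLn(\C_\Y,y^\alpha)}.
\]
Combining these two estimates delivers \eqref{upper_bound_V}.

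Since the argument is essentially a trace bound followed by triangle inequality, there is no real obstacle. The only point requiring a bit of care is verifying that the zero extensions from $\C_\Y$ to $\C$ remain in $\HL(\C, y^\alpha)$ so that Proposition~\ref{H=LM=TR} and the weighted $H^1$ seminorm on the full cylinder are used consistently; but this is precisely the content of Remark~\ref{re:extension}.
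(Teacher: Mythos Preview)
Your proof is correct and follows essentially the same approach as the paper: the trace estimate of Proposition~\ref{H=LM=TR} for \eqref{upper_bound_U}, and the triangle inequality together with \eqref{v-v^Mf} from Theorem~\ref{Thm:v-v^T} for \eqref{upper_bound_V}. Your added remarks about the zero extension via Remark~\ref{re:extension} are a helpful clarification but do not change the argument.
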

\begin{proof}
Estimate \eqref{upper_bound_U} is just an application of the trace estimate
of Proposition~\ref{H=LM=TR}. Inequality \eqref{upper_bound_V} is  obtained by the triangle
inequality and \eqref{v-v^Mf}.
\end{proof}

By Galerkin orthogonality
\[
  \| v - V_{\T_\Y} \|_{\HLn(\C_\Y,y^{\alpha})} =
      \inf_{W \in \V(\T_{\Y})} \| v - W \|_{\HLn(\C_\Y, y^{\alpha})}.
\]
%
Theorem \ref{TH:upper_bound_1} and Galerkin orthogonality imply that the approximation 
estimate \eqref{upper_bound_V} depends on the regularity of $\ve$. To see this we introduce
\begin{equation}
\label{rhon}
  \rho(y) =
    \begin{dcases}
      1, & 0 \leq y < \Y/2, \\
      p,&  \Y/2 \leq y \leq \Y,
    \end{dcases}
\end{equation} 
where $p$ is the unique cubic polynomial on $[\Y/2,\Y]$ defined by the conditions
$p(\Y/2) = 1$, $p(\Y) = 0$, $p'(\Y/2) = 0$ and $p'(\Y) = 0$. Notice that
$\rho \in W_{\infty}^2(0,\Y)$, $|\rho(y)| \leq 1$, $|\rho'(y)| \lesssim 1$  
and $|\rho''(y)| \lesssim 1$. Set $\ve_0(x',y) = \rho(y) \ve(x',y)$
for $x' \in \Omega$ and $y \in [0,\Y]$, and notice that $\ve_0 \in \HL(\C_{\Y},y^{\alpha})$.
With this construction at hand, repeating the arguments used in the proof of
Lemma~\ref{lemma:v-v^T}, we have that
\begin{align*}
  \| \Delta_{x'} \ve_0 \|_{L^2(\C_\Y,y^{\alpha})} &\lesssim 
  \| \Delta_{x'} \ve \|_{L^2(\C_\Y,y^{\alpha})}, \\
  \| \partial_y \nabla_{x'} \ve_0 \|_{L^2(\C_\Y,y^{\alpha})} & \lesssim 
  \| \partial_y \nabla_{x'} \ve \|_{L^2(\C_\Y,y^{\alpha})} + \| f \|_{\Hs'}, \\
  \| \partial_{yy} \ve_0 \|_{L^2(\C_\Y,y^{\beta})} &\lesssim 
  \| \partial_{yy} \ve \|_{L^2(\C_\Y,y^{\beta})} + \| f \|_{\Hs'}.
\end{align*}
In addition, if we assume that there is an operator
\[
 \Pi_{\T_\Y}: \HL(\C_\Y,y^\alpha) \rightarrow \V(\T_\Y),
\]
that is stable, i.e.,~$
\| \Pi_{\T_\Y} w \|_{\HLn(\C_\Y,y^\alpha)} \lesssim \| w \|_{\HLn(\C_\Y,y^\alpha)}$, for all
$w \in \HL(\C_\Y,y^\alpha)$, then
the following estimate holds 
\begin{equation}
\label{estimate_ve0}
\| \ve - V_{\T_{\Y}} \|_{\HLn(\C,y^\alpha)} 
\lesssim \epsilon \|f \|_{\Hs'} + 
\| \ve_0 -  \Pi_{\T_\Y}\ve_0  \|_{\HLn(\C_{\Y},y^{\alpha})}.
\end{equation}
To see this, we use \eqref{upper_bound_V}, together with Galerkin orthogonality 
and the stability of the operator $\Pi_{\T_\Y}$, to obtain
\begin{align*}
  \| \ve - V_{\T_{\Y}} \|_{\HLn(\C,y^\alpha)} & \lesssim 
  \epsilon \|f \|_{\Hs'}
  + \| v - \Pi_{\T_\Y}v  \|_{\HLn(\C_{\Y},y^{\alpha})}
\\
   & \lesssim \epsilon \|f \|_{\Hs'} + \| v - \ve_0  \|_{\HLn(\C_{\Y},y^{\alpha})} 
   + \| \ve_0 -  \Pi_{\T_\Y}\ve_0  \|_{\HLn(\C_{\Y},y^{\alpha})}.
\end{align*}
The second term on the right hand side of the previous inequality
is estimated as in Lemma~\ref{lemma:v-v^T}. We leave the details to the reader.

Estimates for $\ve_0 -  \Pi_{\T_\Y}\ve_0 $ on weighted Sobolev spaces are derived in 
\S\ref{sub:intweighted}. Clearly, these depend on the regularity of $\ve_0$ which,
in light of \eqref{estimate_ve0}, depends on the regularity of $\ve$.
For this reason, and to lighten the notation, we shall in the sequel
write $\ve$ and obtain
interpolation error estimates for it, even though $\ve$ does
not vanish at $y=\Y$.


\subsection{Interpolation estimates in weighted Sobolev spaces}
\label{sub:intweighted}
Let us begin by introducing some
notation and terminology. Given $\T_\Y$, we call $\N$ the set of its nodes and
$\N_{~\textrm{in}}$ the set of its interior and Neumann nodes. For each vertex
$\vero \in \N$, we write $\vero= (\vero',\vero'')$,
where $\vero'$ corresponds to a node of $\T_\Omega$, and
$\vero''$ corresponds to a node of the discretization of the $n+1$--dimension.
We define $h_{\vero'}= \min\{h_K: \vero' \textrm{ is a vertex of } K\}$, and
$h_{\vero''} = \min\{h_I: \vero'' \textrm{ is a vertex of } I\}$.

Given $\vero \in \N$, the \emph{star} or patch around $\vero$ is
defined as
\[
  \omega_{\vero} = \bigcup_{\vero \in T} T,
\]
and for $T \in \T_{\Y}$ we define its \emph{patch} as
\[
  \omega_T = \bigcup_{\vero \in T} \omega_\vero.
\]

Let $\psi \in C^{\infty}(\R^{n+1})$ be such that $\int \psi = 1$ and
$D:=\supp\psi \subset B_r \times (-r_{\Y},r_{\Y})$, where $B_r$ denotes the
ball in $\R^n$ of radius $r$ and centered at zero, and
$r \leq 1/\sigma_{\Omega}$ and $r_{\Y} \leq 1/\sigma$. 
For $\vero \in \N_{~\textrm{in}}$, we rescale $\psi$ as
\[
  \psi_{\vero}(x) =  \frac1{ h_{\vero'}^n h_{\vero''} } \psi
      \left(\frac{\vero'-x'}{h_{\vero'}}, \frac{\vero''-y}{h_{\vero''}} \right),
\]
and note that $\supp \psi_{\vero} \subset \omega_{\vero}$ for
all $\vero\in\N_{~\textrm{in}}$.

Given a function $w \in L^2(\C_\Y,y^{\alpha})$ and a node $\vero$ in $\N_{~\textrm{in}}$
we define, following Dur\'an and Lombardi \cite{DL:05}, the regularized Taylor polynomial of first degree 
of $w$ about $\vero$ as
\begin{equation}
\label{p1average}
  w_{\vero}(z) = \int P(x,z) \psi_{\vero}(x) \diff x 
  = \int_{\omega_{\vero}} P(x,z) \psi_{\vero}(x) \diff x,
\end{equation}
where $P$ denotes the Taylor polynomial of degree 1 in the variable $z$
of the function $w$ about the point $x$, \ie
\begin{equation}
\label{Ptaylor}
  P(x,z) = w(x) + \nabla w(x) \cdot (z - x).
\end{equation}
As a consequence of Remark~\ref{RK:L1} and the fact that the averaged 
Taylor polynomial is defined for functions in $L^1(\C_\Y)$ (\cf \cite[Proposition 4.1.12]{BrennerScott}),
we conclude that
$P$ is well defined for
any function in $L^2(\C_\Y,y^{\alpha})$.

We define the average $\Q_1$ interpolant $\Pi_{\T_{\Y}} w$, as the unique piecewise $\Q_1$ function such
that $\Pi_{\T_{\Y}} w(\vero) = 0$ if $\vero$ lies on the Dirichlet boundary $\Gamma_D$ and
$\Pi_{\T_{\Y}} w(\vero) = w_{\vero}(\vero)$ if $\vero \in \N_{~\textrm{in}}$.
If $\lambda_{\vero}$ denotes the Lagrange basis function
associated with node $\vero$, then
\[
  \Pi_{\T_{\Y}} w = \sum_{\vero \in \N_{\textrm{in}}} w_{\vero}(\vero) \lambda_{\vero}.
\]

There are two principal reasons to consider average interpolation.
First, we are interested in the approximation of singular functions and thus
Lagrange interpolation cannot be used since point-wise values become meaningless.
In fact, this motivated the introduction of average interpolation (see \cite{Clement,SZ:90}).
In addition, average interpolation has better approximation
properties when narrow elements are used (see \cite{Acosta}).


Finally, for $\vero \in \N_{\textrm{in}}$, we define the weighted regularized average of $w$ as
\begin{equation}
\label{p0average}
  Q_{\vero}w = \int w(x) \psi_{\vero}(x) \diff x 
  = \int_{\omega_{\vero}} w(x) \psi_{\vero}(x) \diff x.
\end{equation}

\subsubsection{Weighted Poincar\'e inequality}
\label{sub:sub:poincare}

In order to obtain interpolation error estimates in $L^2(\C_\Y,y^{\alpha})$ and
$H^1(\C_\Y,y^{\alpha})$, it is instrumental to have a weighted Poincar\'e-type inequality.
Weighted Poincar\'e inequalities are particularly pertinent in the study
of the nonlinear potential theory of degenerate elliptic equations, see \cite{FKS:82,HKM}.
If the domain is a ball and the weight belongs to $A_p$, with $1 \leq p < \infty$,
this result can be found in \cite[Theorem~1.3 and Theorem~1.5]{FKS:82}.
However, to the best of our knowledge, such a result is not available in the
literature for more general domains. For our specific weight
we present here a constructive proof, \ie not based on a compactness argument. This
allows us to study the dependence of the constant on the domain.

\begin{lemma}[Weighted Poincar\'e inequality I]
\label{le:Poincareweighted}
Let $\omega \subset \R^{n+1}$ be bounded, star-shaped with
respect to a ball $B$, and $\diam \omega \approx 1$.
Let $\chi \in C^0\left(\bar\omega\right)$ with
$\int_{\omega} \chi = 1$, and
$\xa(y) := \left|a|y| + b\right|^{\alpha}$ for $a,b\in\R$.
If $w \in H^1(\omega,\xa(y))$ is such that
$\int_{\omega} \chi w = 0$, then
\begin{equation}
\label{Poincareweighted}
  \|w \|_{L^2(\omega,\xa)}
  \lesssim \| \nabla w \|_{L^2(\omega,\xa)},
\end{equation}
where the hidden constant depends only on $\chi$, $\alpha$ and the
radius $r$ of $B$, but is independent of both $a$ and $b$.
\end{lemma}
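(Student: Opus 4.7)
The plan is a constructive argument, not based on compactness, that proceeds in three steps: a Bogovskii-type representation of $w$ by $\nabla w$, a weighted Cauchy--Schwarz inequality, and a uniform bound on a local Riesz potential of $\xa$.

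First, since $\omega$ is star-shaped with respect to the ball $B$ of radius $r$ and $\int_\omega \chi w = 0$, I would invoke the classical Bogovskii representation (\cf Galdi's book on Navier--Stokes or Dur\'an--Muschietti) to write
\[
w(z) = \int_\omega \mathbf{G}(z,x) \cdot \nabla w(x)\,\diff x, \qquad z \in \omega,
\]
where the kernel $\mathbf{G}$ is constructed explicitly from $\chi$ by averaging along the star-shape rays from $B$ to $z$, and satisfies the pointwise bound $|\mathbf{G}(z,x)| \leq C(\chi,r)\,|z-x|^{-n}$ for all $z,x\in\omega$. Using the symmetric factorization $|z-x|^{-n} = \bigl(|z-x|^{-n}\xa(x)^{-1}\bigr)^{1/2}\bigl(|z-x|^{-n}\xa(x)\bigr)^{1/2}$, a pointwise Cauchy--Schwarz then yields
\[
|w(z)|^2 \lesssim \Bigl(\int_\omega |z-x|^{-n}\xa(x)^{-1}\diff x\Bigr)\Bigl(\int_\omega |z-x|^{-n}\xa(x)|\nabla w(x)|^2 \diff x\Bigr).
\]
Multiplying by $\xa(z)$, integrating over $\omega$, and applying Fubini reduce the lemma to establishing the uniform bound
\[
\sup_{z\in\omega}\int_\omega |z-x|^{-n}\xa(x)^{\pm 1}\diff x \lesssim 1,
\]
with constant depending only on $\alpha$ and $\diam\omega\approx 1$, and crucially independent of $a$ and $b$.

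Next, I would prove this uniform bound by direct computation. Writing $x=(x',y)$ and integrating first in $x'\in\R^n$ over a ball of radius $\lesssim 1$ produces a factor bounded by $C\log^+(C/|y-y_z|)$. Since $\xa$ depends on $y$ only through $|y|$, normalizing $|a|=1$ when $a\neq 0$ via the identity $|a|y|+b|^\alpha = |a|^\alpha\bigl||y|-c\bigr|^\alpha$ for some $c\in\R$ reduces the remaining one-dimensional integral to
\[
\int_{-R}^{R} \bigl||y|-c\bigr|^{\pm\alpha}\,\log^+\!\Bigl(\tfrac{C}{|y-y_z|}\Bigr) \diff y, \qquad c\in\R,\; y_z\in\R,
\]
with $R\approx\diam\omega$. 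Because $\pm\alpha \in (-1,1)$, the singular factor $||y|-c|^{\pm\alpha}$ is locally integrable and the logarithm contributes only an integrable correction; splitting the range near the singularity $|y|=|c|$ and the evaluation point $y=y_z$ gives a bound uniform in $c$ and $y_z$. This uniformity is the quantitative expression of $\xa$ being a Muckenhoupt $A_2(\R^{n+1})$ weight whose $A_2$ constant is independent of $a,b$.

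The hard part will be this final uniform estimate: tracking how the zero or singularity of $\xa$ at $|y|=-b/a$ interacts with both the Riesz kernel $|z-x|^{-n}$ and the weight $\xa^{\pm 1}$ requires a careful case split (singularity inside $\omega$, near $\partial\omega$, or far away) to prevent constants depending on $a,b$ from entering. The Bogovskii representation in the first step, while classical, is technically delicate; its explicit construction in terms of $\chi$ and the star-shape rays is what ultimately allows the final constant to be expressed solely as $C(\chi,\alpha,r)$.
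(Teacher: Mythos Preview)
Your reduction to the uniform bound
\[
\sup_{z\in\omega}\int_\omega |z-x|^{-n}\xa(x)^{\pm 1}\diff x \lesssim 1
\]
is where the argument breaks: neither of these two quantities is bounded uniformly in $a,b$. After your normalization $|a|=1$ the weight becomes $\xa(y)=\bigl||y|-c\bigr|^\alpha$ with $c\in\R$ still arbitrary. For $\alpha>0$ and $|c|\gg\diam\omega$ one has $\xa\approx |c|^\alpha$ on $\omega$, so $\int_\omega |z-x|^{-n}\xa(x)\diff x\approx |c|^\alpha$, unbounded in $c$; symmetrically the integral with $\xa^{-1}$ blows up when $\alpha<0$. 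The Poincar\'e inequality itself is invariant under $\xa\mapsto C\xa$, which is why its constant survives this regime; what fails is your \emph{decoupling} of the two weighted Riesz potentials. Keeping $\xa(z)$ attached to one factor does not save the scheme either: for $\alpha<0$ and $z$ on the singular set $\{y=0\}$, the quantity $\xa(z)\int_\omega|z-x|^{-n}\xa(x)^{-1}\diff x$ is again unbounded, because $\xa(z)\to\infty$ while the integral stays bounded below. A workable version of your strategy would require a dyadic decomposition of the kernel combined with the $A_2$ doubling property on each annulus, as in Fabes--Kenig--Serapioni; this is substantially more delicate than the direct computation you sketch.

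The paper sidesteps the pointwise Riesz-potential estimate by a duality argument. It sets $\widetilde w=\xa w-\bigl(\int_\omega\xa w\bigr)\chi$, notes that $\int_\omega\widetilde w=0$, and invokes the \emph{weighted} right-inverse of the divergence due to Dur\'an and L\'opez Garc\'\i a: there exists $F\in H_0^1(\omega,\xa^{-1})^{n+1}$ with $-\DIV F=\widetilde w$ and $\|F\|_{L^2(\omega,\xa^{-1})}\lesssim\|\widetilde w\|_{L^2(\omega,\xa^{-1})}$, the constant depending only on $r$ and the $A_2$ constant of $\xa$. Since $\int_\omega\chi w=0$ one has $\|w\|_{L^2(\omega,\xa)}^2=\int_\omega w\,\widetilde w=\int_\omega\nabla w\cdot F$, and then $\|\widetilde w\|_{L^2(\omega,\xa^{-1})}\lesssim\|w\|_{L^2(\omega,\xa)}$ follows directly from the $A_2$ condition. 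The uniformity in $a,b$ enters solely through the uniform $A_2$ constant of $\xa$, and no case analysis on the location of the zero of $\xa$ is needed.
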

\begin{proof}
The fact that $\alpha \in (-1,1)$ implies $\xa \in A_2(\R^{n+1})$
with a Muckenhoupt constant $C_{2,\xa}$ in \eqref{A_pclass}
uniform in both $a$ and $b$. Define
\[
  \widetilde{w} = \xa w - \left(\int_{\omega} \xa w \right)\chi.
\]
Clearly $\widetilde{w} \in L^1(\omega)$ and it has vanishing mean value by
construction.

Since $\int_{\omega} \chi w = 0$ we obtain
\begin{equation}
\label{aux1:1}
  \| w \|^2_{L^2(\omega,\xa)} = \int_{\omega} w \widetilde{w} +
  \left(\int_{\omega} \xa w \right)\int_{\omega} \chi w =
  \int_{\omega} w \widetilde{w}.
\end{equation}
Consequently, given that $\omega$ is star shaped with respect to
$\hat{B}$, and $\xa\in A_2(\R^{n+1})$, there exists
$F \in H_0^1(\omega,\xa)^{n+1}$ such that $- \DIV F = \widetilde{w}$, and
\begin{equation}
\label{Fdivweighted}
  \| F\|_{H_0^1(\omega,\xa^{-1})^{n+1}} \lesssim \| \widetilde{w} \|_{L^2(\omega,\xa^{-1})},
\end{equation}
where the hidden constant in \eqref{Fdivweighted} depends on $r$ and the constant
$C_{2,\xa}$ from Definition~\ref{def:Muckenhoupt}
\cite[Theorem~3.1]{DL:10}. 

Replacing $\widetilde{w}$ by $-\DIV F$ in \eqref{aux1:1},
integrating by parts and using \eqref{Fdivweighted}, we get
\begin{equation}
  \| w \|^2_{L^2(\omega,\xa)} =
  -\int_{\omega} w \,\DIV F = \int_{\omega} \nabla w \cdot F
  \lesssim \|\nabla w \|_{L^2(\omega,\xa)} \| \widetilde{w} \|_{L^2(\omega,\xa^{-1})}.
\label{aux1:2}
\end{equation}
To estimate $\| \widetilde{w} \|_{L^2(\omega,\xa^{-1})}$ we use the 
Cauchy-Schwarz inequality 
and the constant $C_{2,\xi_\alpha}$ from Definition~\ref{def:Muckenhoupt} as follows:
\[
  \|\widetilde{w} \|_{L^2(\omega,\xa^{-1})}^2  \leq
  2 \left( 1 + \int_{\omega} \xa \int_{\omega} \chi^2 \xa^{-1} \right)
  \| w \|^2_{L^2(\omega,\xa)}
  \lesssim \| w \|^2_{L^2(\omega,\xa)}.
\]
Inserting the inequality above into \eqref{aux1:2}, we obtain \eqref{Poincareweighted}.
\end{proof}

We need a slightly more general form of the Poincar\'e inequality for
the applications below. We now relax the geometric assumption on the
domain $\omega$ and let the vanishing mean property hold just in a
subdomain.

\begin{corollary}[Weighted Poincar\'e inequality II]
\label{C:Poincareweighted-2}
Let $\omega=\cup_{i=1}^N\omega_i\subset\R^{n+1}$ be a connected domain and
each $\omega_i$ be a star-shaped domain with respect to a ball
$B_i$. Let $\chi_i\in C^0(\bar\omega_i)$ and $\xa$ be as in Lemma
\ref{le:Poincareweighted}. If $w\in H^1(\omega,\xa)$ and
$w_i:=\int_{\omega_i} w\chi_i$, then
\begin{equation}
\label{Poincareweighted-2}
  \|w - w_i\|_{L^2(\omega,\xa)}
  \lesssim \| \nabla w \|_{L^2(\omega,\xa)}
\qquad\forall 1\le i\le N,
\end{equation}
where the hidden constant depends on $\{\chi_i\}_{i=1}^N$, $\alpha$, the
radius $r_i$ of $B_i$, and the amount of overlap between the subdomains
$\{\omega_i\}_{i=1}^N$, but is independent of both $a$ and $b$.
\end{corollary}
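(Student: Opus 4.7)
\textit{Proof plan.} The idea is to reduce the corollary to Lemma \ref{le:Poincareweighted} applied on each star-shaped piece $\omega_j$, and then use the connectedness of $\omega$ to transfer information from one piece to another by chaining through the overlaps.

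First, I would fix $i$ and apply Lemma \ref{le:Poincareweighted} on each subdomain $\omega_j$ to the function $w - w_j$, using the weight $\chi_j$. Since $\int_{\omega_j}\chi_j(w-w_j)=0$ by the definition of $w_j$ (assuming $\int_{\omega_j}\chi_j = 1$, which we may arrange by scaling), Lemma \ref{le:Poincareweighted} yields
\[
   \|w - w_j\|_{L^2(\omega_j,\xa)} \lesssim \|\nabla w\|_{L^2(\omega_j,\xa)},
   \qquad 1\le j\le N,
\]
with constants depending only on $\chi_j$, $\alpha$ and the radius $r_j$. Then by the triangle inequality on each $\omega_j$,
\[
   \|w - w_i\|_{L^2(\omega_j,\xa)} \le \|w-w_j\|_{L^2(\omega_j,\xa)} + |w_i-w_j|\,\|\xa^{1/2}\|_{L^2(\omega_j)}.
\]
Summing over $j$ and using that the subdomains have bounded overlap reduces everything to bounding $|w_i - w_j|$ by the gradient norm.

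To control $|w_i - w_j|$ I would use the connectedness of $\omega$ to build, for each $j$, a chain $\omega_{i}=\omega_{k_0},\omega_{k_1},\ldots,\omega_{k_m}=\omega_j$ of subdomains in which consecutive pieces have nonempty intersection. For two adjacent subdomains $\omega_k,\omega_{k+1}$, pick a nonnegative cutoff $\varphi\in C^0_c(\omega_k\cap\omega_{k+1})$ with $\int\varphi=1$; then since $w_k,w_{k+1}$ are constants,
\[
   |w_k - w_{k+1}| = \left|\int \varphi\bigl((w-w_{k+1})-(w-w_k)\bigr)\right|
   \le \|\varphi\|_{L^2(\omega_k\cap\omega_{k+1},\xa^{-1})}
   \bigl(\|w-w_k\|_{L^2(\omega_k,\xa)}+\|w-w_{k+1}\|_{L^2(\omega_{k+1},\xa)}\bigr),
\]
by Cauchy--Schwarz with the weight $\xa$. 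Combined with the local Poincar\'e estimate from the first step, this gives
\[
   |w_k - w_{k+1}| \lesssim \|\nabla w\|_{L^2(\omega_k\cup\omega_{k+1},\xa)},
\]
and telescoping along the chain yields $|w_i-w_j|\lesssim \|\nabla w\|_{L^2(\omega,\xa)}$ with a constant that depends on the number of chain links and the overlaps.

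Putting these pieces together gives \eqref{Poincareweighted-2}. The only delicate point is the weighted constant in the chaining step: one must check that $\|\varphi\|_{L^2(\omega_k\cap\omega_{k+1},\xa^{-1})}$ is finite and uniformly controlled, which holds since $\xa^{-1}=|a|y|+b|^{-\alpha}$ with $-\alpha\in(-1,1)$ is still a Muckenhoupt $A_2$ weight and $\varphi$ is supported where the overlap is nontrivial. This is the main obstacle and accounts for the dependence on the overlap and on the $\chi_i$'s, $\alpha$, and $r_i$ in the stated constant.
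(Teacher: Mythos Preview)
Your proposal is correct and follows essentially the same approach as the paper: apply Lemma~\ref{le:Poincareweighted} on each star-shaped piece and then chain through the overlaps to control $|w_i-w_j|$, exactly the mechanism of \cite[Theorem~7.1]{DupontScott} that the paper invokes. The only cosmetic difference is that where you introduce an auxiliary cutoff $\varphi$ and use Cauchy--Schwarz against $\xa^{-1}$, the paper simply compares the $L^2(\xa)$-norm of the constant $w_1-w_2$ on $\omega_2$ and on the overlap $B$ via the ratio $\int_{\omega_2}\xa\big/\int_B\xa$; in both cases the uniformity in $a,b$ comes from the $A_2$ (doubling) property of $\xa$.
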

\begin{proof}
This is a consequence of Lemma \ref{le:Poincareweighted} and 
\cite[Theorem 7.1]{DupontScott}. We sketch the proof here for
completeness. It suffices to deal with two subdomains,
$\omega_1,\omega_2$, and the overlapping region
$B=\omega_1\cap\omega_2$. We observe that
\[
  \|w-w_1\|_{L^2(\omega_2,\xa)} \le \|w-w_2\|_{L^2(\omega_2,\xa)}
  + \|w_1-w_2\|_{L^2(\omega_2,\xa)},
\]
together with
$
  \|w_1-w_2\|_{L^2(\omega_2,\xa)} =
    \left( \frac{\int_{\omega_2}\xa}{\int_B\xa} \right)^{1/2} 
    \|w_1-w_2\|_{L^2(B,\xa)}
$
and
\[
\|w_1-w_2\|_{L^2(B,\xa)}  
\lesssim \|w-w_1\|_{L^2(\omega_1,\xa)} + \|w-w_2\|_{L^2(\omega_2,\xa)},
\] 
imply $\|w-w_1\|_{L^2(\omega_2,\xa)} \lesssim \|\nabla w\|_{L^2({\omega_1 \cup \omega_2},\xa)}$.
This, combined with \eqref{Poincareweighted}, gives 
\eqref{Poincareweighted-2} for $i=1$
with a stability constant depending on the ratio $\frac{\int_{\omega_2}\xa}{\int_B\xa}$.
\end{proof}

\subsubsection{Weighted $L^2$-based interpolation estimates}
\label{sub:sub:L2_estimates}
Owing to the weighted Poincar\'e inequality 
of Corollary \ref{C:Poincareweighted-2},
we can adapt the proof of \cite[Lemma~2.3]{DL:05} to obtain interpolation estimates in the
weighted $L^2$-norm. These estimates allow a disparate mesh size on the extended direction, relative to
the coordinate directions $x_i$, $i=1,\dots,n,$ which may in turn be graded.
This is the principal difference with \cite[Lemma~2.3]{DL:05} where, however, the domain must
be a cube.

\begin{lemma}[Weighted $L^2$-based interpolation estimates]
\label{LM:approximation}
Let $\verot \in \N_{~\emph{in}}$. Then, for all $w \in H^1(\omega_{\verot},y^{\alpha})$, we have
\begin{equation}
\label{v-Q_vL2}
  \|w - {Q_{\verot}w} \|_{L^2(\omega_\verot,y^{\alpha})}
  \lesssim   h_{\verot'}  \| \nabla_{x'} w\|_{L^2(\omega_\verot,y^{\alpha})} +
  h_{\verot''}\| \partial_{y} w\|_{L^2(\omega_\verot,y^{\alpha})},
\end{equation}
and, for all $v \in H^2(\omega_{\verot},y^{\alpha})$ and $j=1,\dots,n+1$, we have
\begin{equation}
\label{v-v_vH1}
  \|\partial_{x_j}(w - w_{\verot}) \|_{L^2(\omega_\verot,y^{\alpha})}
	\lesssim h_{\verot'} \sum_{i=1}^n\| \partial^2_{x_j x_i} w\|_{L^2(\omega_\verot,y^{\alpha})}
  + h_{\verot''}\| \partial^2_{x_j y} w\|_{L^2(\omega_\verot,y^{\alpha})},
  \end{equation}
where, in both inequalities, the hidden constant depends only on
$\alpha$, $\sigma_{\Omega}$, $\sigma$ and $\psi$.
\end{lemma}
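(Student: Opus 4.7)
The plan is to follow the scaling approach of Dur\'an and Lombardi \cite{DL:05}: reduce both estimates to the weighted Poincar\'e inequality of Corollary~\ref{C:Poincareweighted-2} on a reference patch of unit size via an anisotropic dilation. The key observation is that Corollary~\ref{C:Poincareweighted-2} has a constant that is uniform in the parameters $a,b$ of the weight $\xa(y)=|a|y|+b|^\alpha$, which is precisely what arises when the physical weight $y^\alpha$ is pulled back to the reference patch under a dilation that uses different scales in $x'$ and $y$.

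Concretely, given $\verot=(\verot',\verot'')\in \N_{\textrm{in}}$, introduce the anisotropic affine map $F(\hat x',\hat y)=(\verot'+h_{\verot'}\hat x',\,\verot_0 + h_{\verot''}\hat y)$ sending a fixed reference configuration $\hat\omega$ of diameter $\approx 1$ onto $\omega_\verot$, where $\verot_0$ is an appropriate base point in the extended variable. Condition (c) on the mesh and the shape regularity of $\T_\Omega$ guarantee that $\hat\omega$ is a finite union of star-shaped subdomains whose geometry is controlled solely by $\sigma$, $\sigma_\Omega$ and $\psi$. Under this change of variables, $y^\alpha$ becomes $h_{\verot''}^\alpha (\hat y + \hat a)^\alpha$ with $\hat a=\verot_0/h_{\verot''}=O(1)$, so Corollary~\ref{C:Poincareweighted-2} applies on $\hat\omega$ with a constant independent of $h_{\verot'},h_{\verot''}$ and of the position of $\verot$ relative to the degenerate boundary $y=0$.

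For \eqref{v-Q_vL2}, take $\chi=\hat\psi_\verot$, the pullback of $\psi_\verot$ normalized to have integral one. By definition \eqref{p0average}, $Q_\verot w$ equals the $\hat\psi_\verot$-average of $\hat w$ on $\hat\omega$, so $\hat w - Q_\verot w$ has vanishing $\chi$-average. Applying Corollary~\ref{C:Poincareweighted-2} on $\hat\omega$ and undoing the change of variables yields
\[
  \|w-Q_\verot w\|_{L^2(\omega_\verot,y^\alpha)}^2
  \lesssim \sum_{i=1}^n h_{\verot'}^2 \|\partial_{x_i}w\|_{L^2(\omega_\verot,y^\alpha)}^2
  + h_{\verot''}^2\|\partial_y w\|_{L^2(\omega_\verot,y^\alpha)}^2,
\]
the factors $h_{\verot'}^2$ and $h_{\verot''}^2$ coming from the anisotropic scaling of the respective partial derivatives, while the Jacobian factor $h_{\verot'}^n h_{\verot''}^{1+\alpha}$ cancels between the two sides.

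For \eqref{v-v_vH1}, we use the classical identity for averaged Taylor polynomials: differentiating \eqref{p1average} in $z$ and observing from \eqref{Ptaylor} that $\partial_{z_j}P(x,z)=\partial_{x_j}w(x)$, we obtain $\partial_{z_j}w_\verot(z) = Q_\verot(\partial_{x_j}w)$, a constant function in $z$. Therefore $\partial_{x_j}(w-w_\verot) = \partial_{x_j}w - Q_\verot(\partial_{x_j}w)$, and \eqref{v-v_vH1} follows by applying the already-proved estimate \eqref{v-Q_vL2} with $w$ replaced by $\partial_{x_j}w$. The main obstacle is ensuring that the weighted Poincar\'e constant remains uniform under the anisotropic dilation; this is exactly the reason Corollary~\ref{C:Poincareweighted-2} was formulated for weights of the form $|a|y|+b|^\alpha$ with a stability constant independent of $a$ and $b$, and it is what allows elements that degenerate both in shape and towards the singular boundary $y=0$ to be treated on the same footing.
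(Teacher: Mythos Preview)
Your proof is correct and follows essentially the same route as the paper: anisotropic rescaling to a unit-size reference patch, application of the weighted Poincar\'e inequality (you invoke Corollary~\ref{C:Poincareweighted-2}, the paper invokes Lemma~\ref{le:Poincareweighted}, but the point is the same), and then the identity $\partial_{z_j}w_{\verot}=Q_{\verot}(\partial_{x_j}w)$ to reduce \eqref{v-v_vH1} to \eqref{v-Q_vL2}. One small slip: your claim $\hat a=\verot_0/h_{\verot''}=O(1)$ is false for elements far from $y=0$, but this is harmless precisely because, as you correctly stress, the Poincar\'e constant in Corollary~\ref{C:Poincareweighted-2} is independent of $a$ and $b$.
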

\begin{proof}
Define by $\Fm_\vero: (x',y) \rightarrow (\bar{x}',\bar{y}) $ the
scaling map
\[
  \bar{x}' = \frac{\vero'-x'}{h_{\vero'}}, \qquad  \bar{y} = \frac{\vero''-y}{h_{\vero''}},
\]
along with $\overline{\omega}_\vero = \Fm_\vero(\omega_\vero)$ and $\bar{w}(\bar x) = w(x)$.
Define also
$
  \bar{Q} \bar{w}= \int\bar{w} \psi,
$
where $\psi$ has been introduced in Section \ref{sub:intweighted}.
Since $\supp \psi \subset \overline{\omega}_{\vero}$
integration takes place only over $\overline\omega_\vero$, and $\int_{\overline\omega_\vero} \psi =1$. 
Then, $\bar{Q} \bar{w}$ satisfies
$
  \bar{Q} \bar{w} = \int_{\overline{\omega}_{\vero}}\bar{w} \psi =
  \int_{\omega_{\vero}} w \psi_{\vero} = Q_{\vero} w,
$
and
\begin{equation}
\label{int=0}
  \int_{\overline{\omega}_{\vero}} (\bar{Q} \bar{w} - \overline{w} )\psi \diff \bar{x}=
  \bar{Q} \bar{w} - \int_{\overline{\omega}_{\vero}}\bar{w} \psi \diff \bar{x}= 0.
\end{equation}
Simple scaling, using the definition of the mapping $\Fm_{\vero}$, yields
\begin{equation}
\label{changevar1}
  \int_{\omega_{\vero}} y^{\alpha} | w - Q_{\vero} w |^2 \diff x=
  h_{\vero'}^n  h_{\vero''}
  \int_{\overline{\omega}_\vero} \xa | \bar{w} - \bar{Q} \bar{w} |^2 \diff \bar{x},
\end{equation}
where $\xa(y) := | \vero'' - \bar{y}h_{\vero''} |^{\alpha}$.
By shape regularity, the mesh sizes $h_{\vero'}, h_{\vero''}$ satisfy
$1/{2\sigma} \leq h_{\bar{\vero}''} \leq 2\sigma$
and $1/2\sigma_{\Omega} \leq h_{\bar{\vero}'} \leq 2\sigma_{\Omega}$, respectively,
and $\diam \overline{\omega}_{\vero}\approx 1$.
In view of \eqref{int=0}, we can apply Lemma~\ref{le:Poincareweighted} with
the weight $\xa$ and $\chi = \psi$, to $\omega = \overline{\omega}_{\vero}$ to obtain
\[
  \| \bar{w} - \bar{Q} \bar{w} \|_{L^2 \left(\bar{\omega}_\vero, \xa \right)} \lesssim
  \| \bar\nabla \bar{w} \|_{L^2 \left(\bar{\omega}_\vero, \xa \right)},
\]
where the hidden constant depends only on $\alpha$, $\sigma_{\Omega}$, 
$\sigma$ and $\psi$, but not on $\vero''$ and $h_{\vero''}$.
Applying this to \eqref{changevar1}, together with a change of variables with 
$\Fm_{\vero}^{-1}$, we get \eqref{v-Q_vL2}. 

The proof of \eqref{v-v_vH1} is similar. Notice that
\begin{align*}
  w_{\vero}(z) &= \int_{\omega_{\vero}} \left(w(x) + \nabla w(x)\cdot
  (z - x ) \right) \psi_{\vero}(x) \diff x \\
  &=
  \int_{\overline{\omega}_{\vero}}\left(\bar{w}(\bar{x}) + \bar{\nabla}\bar{w}(\bar{x})\cdot
      (\bar{z} - \bar{x} ) \right)\psi(\bar{x}) \diff \bar x
  =: \bar{w}_0(\bar{z}).
\end{align*}
Since
$
  \partial_{\bar{z}_i} \bar{w}_0(\bar{z}) =
  \int_{\overline{\omega}_{\vero}} \partial_{\bar{x}_i} \bar{w}(\bar{x})\psi(\bar{x}) \diff \bar x
$
is constant, we have the vanishing mean value property
\begin{align*}
  \int_{\overline{\omega}_{\vero}} \partial_{\bar{z}_i}
      \left( \bar{w}(\bar z)-\bar{w}_0(\bar z) \right) \psi(\bar z) \diff \bar z = 0.
\end{align*}
Finally, applying Lemma~\ref{le:Poincareweighted} to
$\partial_{\bar{x}_i}\left(\bar{w}(\bar{x}) - \bar{w}_0(\bar{x}) \right)$,
and scaling back via the map $\Fm_{\vero}$, we obtain \eqref{v-v_vH1}.
\end{proof}

By shape regularity, for all $\vero \in \N_{~\textrm{in}}$ and $T \subset\omega_\vero$, the
quantities $h_{\vero'}$ and $h_{\vero''}$ are equivalent to $h_K$ and $h_I$,
up to a constant that depends
only on $\sigma_{\Omega}$ and $\sigma$, respectively. This fact leads to
interpolation estimates in the weighted $L^2$-norm.

\begin{theorem}
[Stability and local interpolation estimates in the weighted $L^2$-norm]
\label{TH:v - PivL2}
For all $T \in \T_{\Y}$ and $w \in L^2(\omega_{T},y^{\alpha})$ we have
\begin{equation}
\label{PiL2bounded}
  \|  \Pi_{\T_{\Y}} w \|_{L^2(T,y^{\alpha})} \lesssim \| w \|_{L^2(\omega_T,y^{\alpha})}.
\end{equation}
If, in addition, $w \in H^1(\omega_{T},y^{\alpha})$
\begin{equation}
\label{v - PivL2}
  \| w - \Pi_{\T_{\Y}} w \|_{L^2(T,y^{\alpha})} \lesssim h_{\verot'}  
      \| \nabla_{x'} w\|_{L^2(\omega_T,y^{\alpha})} + 
     h_{\verot''}\| \partial_{y} w \|_{L^2(\omega_T,y^{\alpha})}.
\end{equation}
The hidden constants in both inequalities depend only on
$\sigma_{\Omega}$, $\sigma$, $\psi$ and $\alpha$.
\end{theorem}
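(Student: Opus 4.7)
The plan is to follow the template of \cite{DL:05}, reducing both estimates to the reference patch via the scaling map $\Fm_\verot$ introduced in the proof of Lemma~\ref{LM:approximation}, and combining the $L^2$-stability of the regularized Taylor polynomial with the weighted Poincar\'e inequality of Corollary~\ref{C:Poincareweighted-2}. After pullback, the patch $\overline\omega_\verot$ has diameter $\approx 1$ and the rescaled weight still lies in $A_2(\R^{n+1})$ with a Muckenhoupt constant uniform in the vertex position.

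For the stability bound \eqref{PiL2bounded}, I would start from $\Pi_{\T_\Y} w|_T = \sum_{\verot \in T \cap \N_{~\textrm{in}}} w_\verot(\verot) \lambda_\verot$, use $\|\lambda_\verot\|_{L^\infty(T)} \le 1$ and the uniformly bounded cardinality of the vertex set of $T$, and reduce matters to $|w_\verot(\verot)|^2 \int_T y^\alpha \lesssim \|w\|_{L^2(\omega_\verot, y^\alpha)}^2$ for each $\verot$. To remove the gradient piece of $P(x,\verot) = w(x) + \nabla w(x)\cdot(\verot - x)$, I would integrate by parts against the compactly supported cutoff $\psi_\verot$; the extra power of $h^{-1}$ from differentiating $\psi_\verot$ is absorbed by $|\verot - x| \lesssim h$ on $\supp\psi_\verot$. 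A Cauchy--Schwarz split $1 = y^{\alpha/2} y^{-\alpha/2}$, combined with the Muckenhoupt estimate $(\int_{\omega_\verot} y^\alpha)(\int_{\omega_\verot} y^{-\alpha}) \lesssim |\omega_\verot|^2$ and the volume estimate $|\omega_\verot| \approx h_{\verot'}^n h_{\verot''}$, then yields the required pointwise bound on $|w_\verot(\verot)|$. Squaring, using $\int_T y^\alpha \le \int_{\omega_\verot} y^\alpha$, and summing over the vertices of $T$ gives \eqref{PiL2bounded}.

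For the interpolation estimate \eqref{v - PivL2}, I would subtract a well-chosen constant. Pick a vertex $\verot^* \in T \cap \N_{~\textrm{in}}$, set $c := Q_{\verot^*} w \in \R$, and use that $\Pi_{\T_\Y}$ preserves constants on $T$ when all vertices of $T$ lie in $\N_{~\textrm{in}}$, so
$$w - \Pi_{\T_\Y} w = (w - c) - \Pi_{\T_\Y}(w - c) \quad \text{on } T.$$
Applying the already-established stability \eqref{PiL2bounded} to $w - c$ gives
$$\| w - \Pi_{\T_\Y} w \|_{L^2(T, y^\alpha)} \lesssim \| w - c \|_{L^2(\omega_T, y^\alpha)}.$$
Finally, Corollary~\ref{C:Poincareweighted-2} applied to the overlapping cover $\omega_T = \bigcup_{\verot \in T} \omega_\verot$ with weight $\xa$ (taking $a = 1$, $b = 0$) delivers the anisotropic bound $h_{\verot'}\|\nabla_{x'} w\|_{L^2(\omega_T, y^\alpha)} + h_{\verot''}\|\partial_y w\|_{L^2(\omega_T, y^\alpha)}$, the two scalings arising from applying Poincar\'e separately in the $x'$ and $y$ directions after the pullback $\Fm_\verot$, as in the proof of Lemma~\ref{LM:approximation}. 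Elements of $T$ touching $\Gamma_D$ are treated by a minor variant in which the missing Dirichlet vertices are absorbed using the vanishing trace condition at those faces.

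The principal obstacle is the uniform pointwise bound on $|w_\verot(\verot)|$ by a weighted $L^2$-norm of $w$ alone, given that the averaged Taylor polynomial a priori involves $\nabla w$. Integration by parts transfers the gradient onto $\psi_\verot$, but introduces a singular factor $y^{-\alpha}$ under dual Cauchy--Schwarz; controlling this factor uniformly in the vertical position of $T$ inside $\C_\Y$ is precisely where the Muckenhoupt $A_2$ character of $y^\alpha$ becomes essential, and it is what guarantees that the constants in \eqref{PiL2bounded} and \eqref{v - PivL2} depend only on $\alpha$, $\sigma_\Omega$, $\sigma$ and $\psi$.
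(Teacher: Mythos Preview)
Your proposal is correct and follows essentially the same route as the paper: establish the stability bound \eqref{PiL2bounded} by integrating by parts against $\psi_\verot$ to transfer the gradient off $w$, then applying Cauchy--Schwarz with the dual weight $y^{-\alpha}$ and the $A_2$ property; and deduce \eqref{v - PivL2} by subtracting the constant $Q_{\verot}w$, using stability of $I-\Pi_{\T_\Y}$, and invoking the weighted Poincar\'e inequality of Corollary~\ref{C:Poincareweighted-2} after anisotropic rescaling. The paper presents the two parts in the opposite order (interpolation first, assuming stability, then the stability proof), but the ingredients and logic are identical.
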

\begin{proof}
Let $T$ be an element of $\T_{\Y}$. Assume, for the moment,
that $ \Pi_{\T_{\Y}}$ is uniformly bounded 
as a mapping from $L^2(\omega_T,y^{\alpha})$ to $L^2(T,y^{\alpha})$, \ie \eqref{PiL2bounded}.

Choose an interior node $\vero$ of $T$, \ie a node $\vero$ of $T$ 
such that $\vero\in \N_{~\textrm{in}}$. Since $Q_{\vero}w$ is constant,
we deduce $ \Pi_{\T_{\Y}} Q_{\vero} w = Q_{\vero} w$, whence
\[
  \| w -  \Pi_{\T_{\Y}} w \|_{L^2(T,y^{\alpha})} =
  \| (I- \Pi_{\T_{\Y}})(w - Q_{\vero} w) \|_{L^2(T,y^{\alpha})}
  \lesssim \| w - Q_{\vero} w \|_{L^2(\omega_T,y^{\alpha})},
\]
so that \eqref{v - PivL2} follows from Corollary \ref{C:Poincareweighted-2}.

It remains to show the local boundedness \eqref{PiL2bounded} of $\Pi_{\T_{\Y}}$. By definition,
\[
   \Pi_{\T_{\Y}} w = \sum_{i=1}^{n_T} w_{\vero_i}(\vero_i) \lambda_{\vero_i},
\]
where $\{ \vero_{i} \}_{i=1}^{n_T}$ denotes the set of interior vertices of $T$.
By the triangle inequality
\begin{equation}
\label{PiL2aux}
  \| \Pi_{\T_{\Y}} w \|_{L^2(T,y^{\alpha})} \leq  \sum_{i=1}^{n_T} \| w_{\vero_i} \|_{L^{\infty}(T)}
      \| \lambda_{\vero_i}\|_{L^2(T,y^{\alpha})},
\end{equation}
so that we need to estimate $\| w_{\vero_i} \|_{L^{\infty}(T)}$. This follows from
\eqref{p1average} along with,
\begin{equation}
\label{L2aux1}
  \left| \int_{\omega_{\vero_i}} w \psi_{\vero_i} \right|
  \leq \| w \|_{L^2(\omega_{\vero_i},y^{\alpha})} \|
  \psi_{\vero_i}\|_{L^2(\omega_{\vero_i},y^{-\alpha})},
\end{equation}
and, for $\ell = 1,\ldots,n+1$,
\begin{equation}
\label{L2aux2}
  \left| \int_{\omega_{\vero_i}} \partial_{x_\ell}w(x) (z_\ell-x_\ell) \psi_{\vero_i}(x) \diff x \right|
  \lesssim  \| w \|_{L^2(\omega_{\vero_i},y^{\alpha})}
  \| \psi_{\vero_i}\|_{L^2(\omega_{\vero_i},y^{-\alpha})}.
\end{equation}
We get (\ref{L2aux2}) upon integration by parts, 
$\psi_{\vero_i} =0$ on $\partial \omega_{\vero_i}$, and
$|z_\ell - x_\ell| \lesssim h_{K} \approx h_{\vero'}$ for $\ell=1,\cdots,n$ and
$|z_{n+1} - y| \lesssim h_I \approx h_{\vero''}$.
Replacing \eqref{L2aux1} and \eqref{L2aux2} in \eqref{PiL2aux}, we arrive at
\[
  \| \Pi_{\T_{\Y}} w \|_{L^2(T,y^{\alpha})} \lesssim \| w \|_{L^2(\omega_T,y^{\alpha})}
  \sum_{i=1}^{n_T} \| \lambda_{\vero_i}\|_{L^2(T,y^{\alpha})}
  \| \psi_{\vero_i}\|_{L^2(\omega_{\vero_i},y^{-\alpha})}
                                \lesssim \| w\|_{L^2(\omega_T,y^{\alpha})},
\]
where the last inequality is a consequence of 
$\lambda_{\vero_i}$ and $\psi$ being bounded in $L^{\infty}(\omega_T)$, 
\[
\| \lambda_{\vero_i}\|_{L^2(T,y^{\alpha})}
  \| \psi_{\vero_i}\|_{L^2(\omega_{\vero_i},y^{-\alpha})}\lesssim
  |\omega_{\vero_i}|^{-1}\left( \int_{\omega_{\vero_i}} |y|^\alpha \int_{\omega_{\vero_i}}
  |y|^{-\alpha}\right)^{1/2},
\]
together with $|y|^{\alpha} \in A_2(\R^{n+1})$; see \eqref{A_pclass}.
\end{proof}

\subsubsection{Weighted $H^1$-based interpolation estimates on interior elements}
\label{sub:sub:H1_estimates}
Here we prove interpolation estimates on the first derivatives for interior elements.
The, rather technical, proof is an adaption of \cite[Theorem~2.6]{DL:05} to our particular
geometric setting. In contrast to \cite[Theorem~2.6]{DL:05}, we do not have the symmetries of 
a cube. However, exploiting the Cartesian product structure of the elements $T = K \times I$, 
we are capable of handling the anisotropy in the extended variable $y$
for general shape-regular graded meshes $\T_\Y$.
This is the content of the following result.

\begin{theorem}
[Stability and local interpolation: interior elements]
\label{TH:v - PivH1}
Let $T \in \T_{\Y}$ be such that $\partial T \cap \Gamma_D = \emptyset$.
For all $w \in H^2(\omega_{T},y^{\alpha})$ we have the stability bounds
\begin{align}
\label{dxPIcontinuous}
  \|\nabla_{x'} \Pi_{\T_{\Y}} w \|_{L^2(T,y^{\alpha})} &\lesssim 
  \| \nabla_{x'} w \|_{L^2(\omega_T,y^{\alpha})},
\\
\label{dyPIcontinuous}
  \|\partial_{y}\Pi_{\T_{\Y}} w \|_{L^2(T,y^{\alpha})} &\lesssim 
  \| \partial_y w \|_{L^2(\omega_T,y^{\alpha})},
\end{align}
and, for all $w \in H^2(\omega_{T},y^{\alpha})$ and $j=1,\ldots,n+1$ we have the error estimates
\begin{equation}
\label{v - PivH1}
  \|\partial_{x_j}( w - \Pi_{\T_{\Y}} w) \|_{L^2(T,y^{\alpha})} \lesssim 
  h_{\verot'}  \| \nabla_{x'} \partial_{x_j} w \|_{L^2(\omega_T,y^{\alpha})} +
  h_{\verot''}\| \partial_{y}\partial_{x_j} w \|_{L^2(\omega_T,y^{\alpha})}.
\end{equation}
\end{theorem}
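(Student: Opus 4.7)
My plan is to adapt the argument of \cite[Theorem~2.6]{DL:05} to the anisotropic, tensor-product geometry $T = K \times I$, relying on the polynomial reproduction of $\Pi_{\T_\Y}$, the weighted Poincar\'e inequality of Corollary~\ref{C:Poincareweighted-2}, and the factorization of the reference $\Q_1$ basis. The cornerstone is the polynomial reproduction property. Since $\partial T \cap \Gamma_D = \emptyset$, every vertex of $T$ lies in $\N_{~\textrm{in}}$, and for any affine $L$ the Taylor polynomial \eqref{Ptaylor} reduces to $P(x,z) = L(z)$, so that $L_\vero(\vero) = L(\vero)$ at every vertex $\vero$ of $T$. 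The partition of unity $\sum_{\vero \in T}\lambda_\vero \equiv 1$ on $T$ then yields $\Pi_{\T_\Y} L = L$ on $T$; that is, $\Pi_{\T_\Y}$ reproduces linear polynomials on interior elements.

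For the stability bound \eqref{dxPIcontinuous}, I would pull everything back to $\hat T = [0,1]^{n+1}$ through $\F_T$ and exploit the tensor-product decomposition of the reference basis, $\hat\lambda_\vero(\hat x',\hat y) = \hat\lambda_{\vero'}(\hat x')\,\hat\lambda_{\vero''}(\hat y)$, together with the identity $\sum_{\vero'}\partial_{\hat x_j}\hat\lambda_{\vero'} \equiv 0$ on $[0,1]^n$. This allows me to write, for arbitrary slice constants $c_{\vero''}$,
\[
  \partial_{\hat x_j}\widehat{\Pi_{\T_\Y} w}
  = \sum_{\vero''}\hat\lambda_{\vero''}(\hat y)\sum_{\vero'}\bigl(\hat w_{(\vero',\vero'')}((\vero',\vero'')) - c_{\vero''}\bigr)\partial_{\hat x_j}\hat\lambda_{\vero'}(\hat x').
\]
Choosing $c_{\vero''}$ as the value at a fixed horizontal reference vertex, the remaining differences of averaged Taylor polynomials can be controlled purely by $\nabla_{\hat x'}\hat w$ through Corollary~\ref{C:Poincareweighted-2} applied along the $\hat x'$-strip of the reference patch with the push-forward weight $\xa$ (uniformly in $\vero''$ by the Muckenhoupt $A_2$ property of $|y|^\alpha$). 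Scaling back via \eqref{J_F_T} delivers \eqref{dxPIcontinuous}. The bound \eqref{dyPIcontinuous} is obtained symmetrically, swapping the roles of $\hat x'$ and $\hat y$ and using $\sum_{\vero''}\partial_{\hat y}\hat\lambda_{\vero''} \equiv 0$ on vertical slices.

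The interpolation estimate \eqref{v - PivH1} then follows from the stability bounds together with Lemma~\ref{LM:approximation}. Fixing any vertex $\vero$ of $T$ and letting $w_\vero$ denote the averaged Taylor polynomial \eqref{p1average}, which is affine and hence satisfies $\Pi_{\T_\Y} w_\vero = w_\vero$ on $T$, I would decompose
\[
  \partial_{x_j}(w - \Pi_{\T_\Y} w) = \partial_{x_j}(w - w_\vero) - \partial_{x_j}\Pi_{\T_\Y}(w - w_\vero).
\]
The first term is bounded directly by \eqref{v-v_vH1}. For the second, applying \eqref{dxPIcontinuous} (when $1\le j \le n$) or \eqref{dyPIcontinuous} (when $j = n+1$) to $w - w_\vero$ yields a bound in terms of $\nabla_{x'}(w - w_\vero)$ or $\partial_y(w - w_\vero)$ on $\omega_T$, and a final application of \eqref{v-v_vH1} produces \eqref{v - PivH1}.

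The main obstacle is the anisotropic stability itself. A direct approach based on subtracting a constant and combining the $L^2$-stability of Theorem~\ref{TH:v - PivL2} with a standard inverse inequality couples $h_K^{-1}$ with $h_{\vero''}\|\partial_y w\|$, spoiling the clean right-hand side of \eqref{dxPIcontinuous}. Only by passing to the reference element and explicitly exploiting the tensor-product factorization of the $\Q_1$ basis, so as to localize the weighted Poincar\'e argument to a lower-dimensional slice of the patch, does one obtain a bound involving only the corresponding partial derivative. Additional care is needed to control the push-forward of $y^\alpha$ uniformly in $\vero''$, which is handled by the $A_2$ condition together with the local quasi-uniformity assumption (c) on $\Tr$.
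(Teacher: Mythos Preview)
Your reduction of \eqref{v - PivH1} to the stability bounds \eqref{dxPIcontinuous}--\eqref{dyPIcontinuous} via polynomial reproduction and Lemma~\ref{LM:approximation} is valid and is a cleaner organization than the paper's, which proceeds in the opposite order: it proves \eqref{v - PivH1} first (Steps~1--2, writing $q := w_{\vero_1} - \Pi_{\T_\Y} w \in \Q_1(T)$ and bounding $\partial_y q$ and $\nabla_{x'} q$ vertex-pair by vertex-pair) and establishes stability only afterwards (Step~3). One minor loss in your route: since \eqref{dxPIcontinuous} controls only the full $\nabla_{x'}\Pi_{\T_\Y}$, not each $\partial_{x_j}\Pi_{\T_\Y}$ separately, you recover \eqref{v - PivH1} exactly for $j=n+1$ but, for $1\le j\le n$, only with $\|\nabla_{x'}\partial_{x_j} w\|$ and $\|\partial_y\partial_{x_j} w\|$ replaced by the larger quantities $\|\nabla_{x'}^2 w\|$ and $\|\partial_y\nabla_{x'} w\|$. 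This is harmless for every later application in the paper.

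The genuine gap is in your stability argument. Corollary~\ref{C:Poincareweighted-2} bounds a weighted $L^2$-norm by the \emph{full} gradient; it does not bound the scalar differences $w_{\vero_i}(\vero_i) - w_{\vero_k}(\vero_k)$ between averaged Taylor values, nor can it by itself isolate $\partial_y w$ (resp.\ $\nabla_{x'} w$) when the two vertices differ only in the extended (resp.\ horizontal) coordinate. A slice-wise Poincar\'e in $\hat x'$ alone would require the kernels $\psi_{\vero_i}$ and $\psi_{\vero_k}$ to be pure translates of one another, but they are not: the local sizes $h_{\vero'}, h_{\vero''}$ vary from vertex to vertex, so the kernels are also rescaled. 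The paper, following \cite{DL:05}, handles both issues by an explicit one-parameter path: with $\theta$ as in \eqref{theta} it writes each vertex-value difference as $\int_0^1\!\int F_z'(t)\,\psi(z)\,dz\,dt$, so that $F_z'$ involves only the directional derivative along $\theta$ and the change of variables absorbs the varying scales. For stability (Step~3), the paper additionally integrates \eqref{p1average} by parts to remove $\nabla w$ from the averaged Taylor polynomial (the decomposition $I_1+I_2$ in \eqref{I_1+I_2}) before taking the path integral; this is precisely what downgrades the bound from second to first derivatives of $w$. Your sketch supplies no substitute for this mechanism, and the invocation of Corollary~\ref{C:Poincareweighted-2} on ``strips'' does not fill it.
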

\begin{proof}
\begin{figure}
\centering
\includegraphics[width=0.55\textwidth]{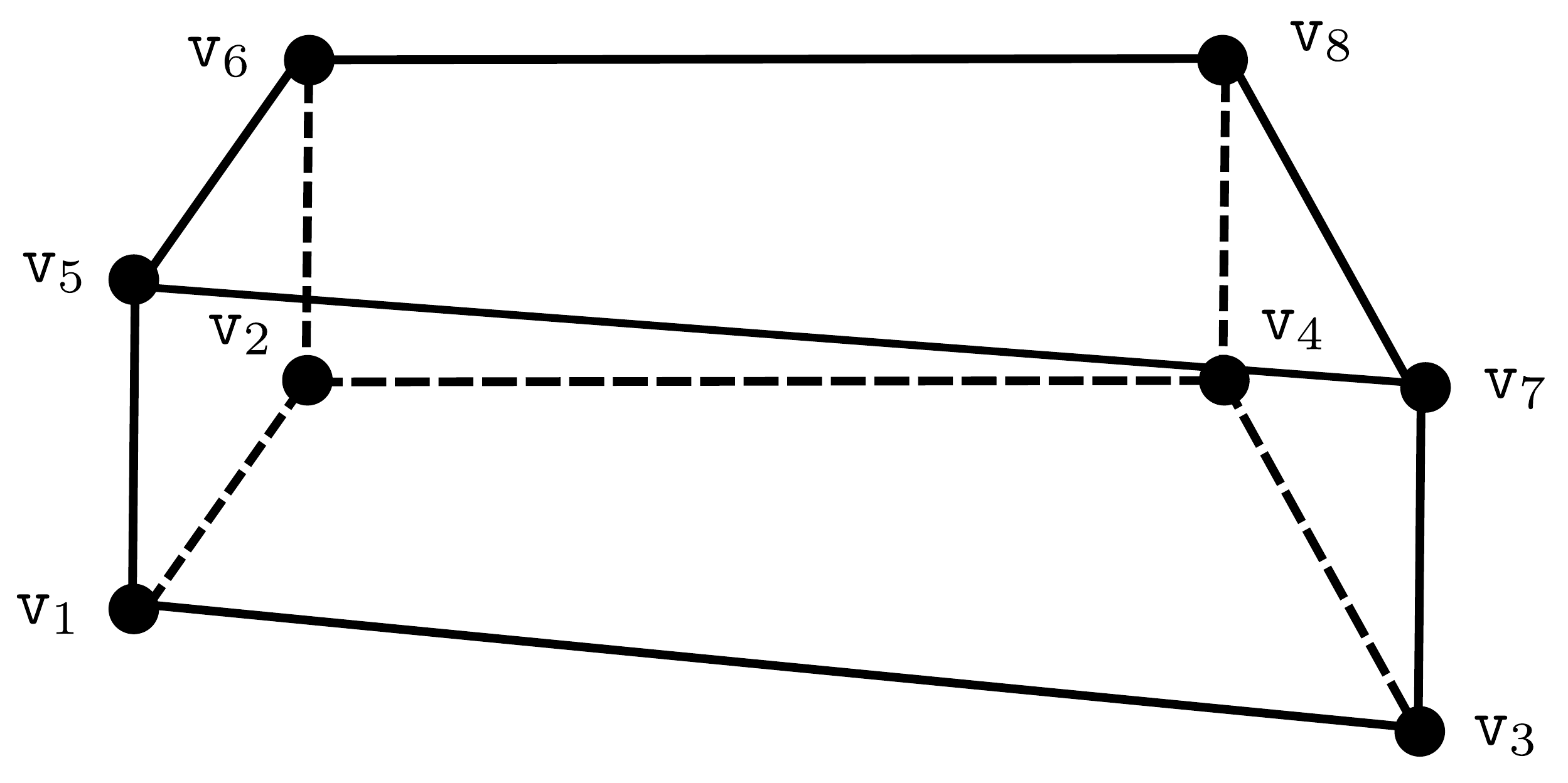}
\caption{A generic element $T= K \times I$ in three dimensions: a quadrilateral prism.}
\label{fig:prism}
\end{figure}
To exploit the particular structure of $T$, we label its vertices
in an appropriate way; see Figure~\ref{fig:prism} for the three-dimensional case.
In general, if $T=K\times[a,b]$, we first assign a numbering $\{\vero_k\}_{k=1,\ldots,2^{n}}$
to the nodes that belong to $K\times\{a\}$. If $(\tilde\vero',b)$ is a vertex in 
$K \times \{b\}$, then
there is a $\vero_k \in K \times \{a\}$ such that $\tilde{\vero}'=\vero_k'$, and we set
$\vero_{k+2^n} = \tilde\vero$. We proceed in three steps.

\noindent \boxed{1} \emph{Derivative $\partial_y$ in the extended dimension.}
We wish to obtain a bound for the norm $\|\partial_{y}( w - \Pi_{\T_{\Y}} w) \|_{L^2(T,y^{\alpha})}$.
Since, $w - \Pi_{\T_{\Y}} w = (w - w_{\vero_1}) + (w_{\vero_1}-\Pi_{\T_{\Y}} w)$
and an estimate for the difference
$w - w_{\vero_1}$
is given in Lemma~\ref{LM:approximation}, it suffices to consider
$q :=  w_{\vero_1}-\Pi_{\T_{\Y}} w \in \Q_1$. Thanks to the special
labeling of the nodes and the tensor product structure of the elements,
\ie $\partial_y \lambda_{\vero_{i+2^n}} = -\partial_y\lambda_{\vero_{i}}$, we get
\[
  \partial_y q = \sum_{i=1}^{2^{n+1}} q(\vero_i) \partial_y \lambda_{\vero_i} =
  \sum_{i=1}^{2^n} ( q(\vero_i) - q(\vero_{i+2^n})) \partial_y \lambda_{\vero_i},
\]
so that
\begin{equation}
\label{partial_yq}
  \| \partial_y q \|_{L^2(T,y^{\alpha})} \leq
  \sum_{i=1}^{2^n} | q(\vero_i) - q(\vero_{i+2^n}) |
  \| \partial_y \lambda_{\vero_i}\|_{ L^2(T,y^\alpha) }.
\end{equation}
To estimate the differences $| q(\vero_i) - q(\vero_{i+2^n}) |$ for $i=1,\cdots,2^n$ we may,
without loss of generality, set $i=1$.
By the definitions of $\Pi_{\T_{\Y}}$ and $q$, we have
$\Pi_{\T_{\Y}}w(\vero_1) = w_{\vero_1}(\vero_1)$, whence
\[
  \delta q(\vero_1):= q(\vero_1) - q(\vero_{1+2^n})
   = w_{\vero_{1+2^n}}(\vero_{1+2^n}) - w_{\vero_1}(\vero_{1+2^n}),
\]
and by the definition \eqref{p1average}
of the averaged Taylor polynomial, we have
\begin{equation}
\label{int:P-P,1}
   \delta q(\vero_1)=
  \int P(x,\vero_{1+2^n}) \psi_{\vero_{1+2^n}}(x) \diff x 
  - \int P(x,\vero_{1+2^n}) \psi_{\vero_{1}}(x) \diff x.
\end{equation}

Recalling the operator $\odot$, introduced in \eqref{eq:defcolon}, we notice that, for
$h_\vero = (h_{\vero'},h_{\vero''})$ and $z\in \R^{n+1}$,
the vector $h_\vero \odot z$ is uniformly equivalent to $(h_K z', h_I z'')$
for all $T = K \times I$ in the star $\omega_{\vero}$.
Changing variables in \eqref{int:P-P,1} yields
\begin{equation}
  \label{int:P-P,2}
  \delta q(\vero_1)=
  \int \left( P(\vero_{1+2^n}- h_{\vero_{1+2^n}} \odot z, \vero_{1+2^n}) -
   P(\vero_1 - h_{\vero_1} \odot z,\vero_{1+2^n}) \right) \psi(z) \diff z.
\end{equation}
To estimate this expression define
\begin{eqnarray}
\label{theta}
  \theta = (0,\theta'') = 
  \left(0,\vero_{1+2^n}''
  - \vero_1''+( {h_{\vero_1''}-h_{ \vero_{1+2^n}'' }} )z'' \right),
\end{eqnarray}
and $F_z(t) = P(\vero_1 - h_{\vero_1} \odot z + t\theta,\vero_{1+2^n})$. Using that
$\vero_1'=\vero_{1+2^n}'$ and $h_{\vero_1'} = h_{\vero_{1+2^n}'}$, 
we easily obtain
\[
  P(\vero_{1+2^n}- h_{\vero_{1+2^n}} \odot z, \vero_{1+2^n})-
  P(\vero_1 - h_{\vero_1} \odot z,\vero_{1+2^n}) = F_z(1) - F_z(0).
\]
Consequently,
\begin{equation}
\label{wIoft}
  {\delta q(\vero_1)} = \int \int_0^1 F_z'(t) {\psi(z)} \diff t \diff z
  = \int_0^1 \int F_z'(t)\psi(z) \diff z \diff t,
\end{equation}
and since $\psi$ is bounded in $L^{\infty}$ and $\supp \psi =D \subset B_1 \times (-1,1)$,
we need to estimate the integral
\[
  I(t) = \int_{D} | F_z'(t) | \diff z, \quad 0 \leq t \leq 1.
\]
Invoking the definitions of $F_{z}$ and $P(x,y)$, we deduce
\begin{equation*}
  F_z'(t) = \nabla_x P(\vero_1 - h_{\vero_1} \odot z + t\theta,\vero_{1+2^n}) \cdot \theta,
\end{equation*}
and
\begin{equation*}
  \nabla_x P(x,\vero) = D^2 w(x)\cdot(\vero - x).
\end{equation*}
Using these two expressions, we arrive at
\begin{align*}
  I(t) &\leq \int_D \left(
          \left| \partial_{yy}^2 w(\vero_1 - h_{\vero_1}\odot z + t\theta) \right|
          \left| \vero_{1+2^n}''- \vero_1'' + {h_{\vero_1''}} z'' - t\theta'' \right| \right. \\
          &+
         \left. \left| \partial_y\nabla_{x'} w(\vero_1 - h_{\vero_1} \odot z + t\theta) \right|
         |\vero_{1+2^n}' - \vero_1' + h_{\vero_1'}z' |
                      \right) |\theta''|\diff z, 
\end{align*}
Now, since $|z'|, |z''| \leq 1$ and $0 \leq t \leq 1$, we see that
\[
  | \vero_{1+2^n}' - \vero_1' + h_{\vero_1'}z' | \lesssim
  h_{\vero_1'}, \qquad | \vero_{1+2^n}'' - \vero_1'' + {h_{\vero_1''}z'' - t\theta'' }|  
  \lesssim {h_{\vero_1''}}.
\]
Consequently,
\begin{multline*}
  I(t) \lesssim
    \int_{D} \left(
      \left| \partial_{yy}^2 w(\vero_1 - h_{\vero_1} \odot z + t\theta) \right| {h_{\vero_1''}^2}
     \right. \\
     \left.
      + \left| \partial_y \nabla_{x'} w (\vero_1 - h_{\vero_1} \odot z + t\theta) \right|
          h_{\vero_1'} {h_{\vero_1''}}
    \right)\diff z.
\end{multline*}
Changing variables, via $\tau = \vero_1 - h_{\vero_1} \odot z + t \theta$, 
we obtain
\begin{equation}
\label{Iaux}
  I(t) \lesssim \int_{\omega_T} \left(
    \frac{h_{\vero_1''}}{ {h_{\vero_1'}^n}}\left|\partial_{yy}^2 w(\tau) \right|
    + \frac{1}{{ h_{\vero_1'}^{n-1} } }\left| \partial_y \nabla_{x'} w(\tau) \right| \right) \diff \tau,
\end{equation}
because the support $D$ of $\psi$ is contained in 
$B_{1/\sigma_{\Omega}} \times (-1/\sigma_{\Y},1/\sigma_{\Y})$,
and so is mapped into $\omega_{\vero_1}\subset\omega_T$. Notice also that
$h_{\vero_1''} \lesssim (1-t)h_{\vero_1''} + th_{\vero_{1+2^n}''}$.
This implies
\begin{equation}
 \label{Ioft}
  I(t) \lesssim \left(
    \frac{h_{\vero_1''}}{{h_{\vero_1'}^n}}\| \partial_{yy}^2 w \|_{L^2(\omega_T,y^{\alpha})} +
    \frac{1}{{ h_{\vero_1'}^{n-1} } }\| \nabla_{x'} \partial_y w \|_{L^2(\omega_T,y^{\alpha})}
                  \right)
  \| 1 \|_{L^2(\omega_T,y^{-\alpha})},
\end{equation}
which, together with \eqref{wIoft}, yields
\begin{equation}
\label{w-w}
  \begin{aligned}
    |\delta q(\vero_1)| \| \partial_y \lambda_{\vero_1} \|_{L^2(T,y^{\alpha})} &\lesssim
    \left(
       \frac{h_{\vero_1''}}{{h_{\vero_1'}^n}} \|\partial_{yy}^2 w \|_{L^2(\omega_T,y^{\alpha})}
      + \frac{1}{{ h_{\vero_1'}^{n-1} } }\| \nabla_{x'} \partial_y w \|_{L^2(\omega_T,y^{\alpha})}
    \right) \\
    &\cdot \| 1 \|_{L^2(\omega_T,y^{-\alpha})}
    \| \partial_y \lambda_{\vero_1}\|_{L^2(T,y^{\alpha})}.
  \end{aligned}
\end{equation}
Since $|y|^\alpha \in A_2(\R^{n+1})$, we have
\begin{equation}
  \| 1 \|_{L^2(\omega_T,y^{-\alpha})} \|\partial_y \lambda_{\vero_1} \|_{L^2(T,y^{\alpha})}
  \lesssim 
  {h_{\vero_1'}^{n}} {\frac{1}{h_{\vero_1''}}}
  \left(\int_{I} y^{-\alpha} \right)^{\srn}
  \left(\int_{I} y^{\alpha} \right)^{\srn}
  \lesssim {h_{\vero_1'}^n}.
\end{equation}
Replacing this into \eqref{w-w}, we obtain
\begin{equation}
  {| \delta q(\vero_1)|} \|\partial_y \lambda_{\vero_1} \|_{L^2(T,y^{\alpha})}
  \lesssim h_{\vero_1}'
  \| \nabla_{x'} \partial_y w \|_{L^2(\omega_T,y^{\alpha})}
  + {h_{\vero_1''}} \|\partial_{yy}^2 w \|_{L^2(\omega_T,y^{\alpha})},
\end{equation}
which, in this case, implies \eqref{v - PivH1}.

\noindent \boxed{2} \emph{Derivatives $\nabla_{x'}$ in the domain $\Omega$.} To prove an estimate for 
$\nabla_{x'}(w-\Pi_{\T_{\Y}} w)$ we notice that,
given a vertex $\vero$, the associated basis function $\lambda_{\vero}$ 
can be written as $\lambda_{\vero}(x) = \Lambda_{\vero'}(x') \mu_{\vero''}(y)$,
where $\Lambda_{\vero'}$ is the canonical $\Q_1$ basis function
on the variable $x'$ associated to the node $\vero'$ in the triangulation $\T_\Omega$,
and $\mu_{\vero''}$ corresponds to the piecewise $\mathbb{P}_1$ basis function 
associated to the node ${\vero''}$.
Recall that, by construction, the basis $\{ \Lambda_i \}_{i=1}^{2^n}$
possesses the so-called partition of unity property, \ie
\[
  \sum_{i=1}^{2^n} \Lambda_{\vero}(x') = 1 \quad \forall x' \in K,
  \qquad \Longrightarrow \qquad
  \sum_{i=1}^{2^n} \nabla_{x'}\Lambda_{\vero}(x') = 0 \quad \forall x' \in K.
\]
This implies that, for every $q\in \Q_1(T)$,
\begin{align*}
  \nabla_{x'}q &= \sum_{i=1}^{2^{n+1}} q(\vero_i) \nabla_{x'}\lambda_{\vero_i}
  = \sum_{i=1}^{2^{n}} \left(
        q(\vero_i)\mu_{\vero_i''}(y) + q(\vero_{i+2^n})\mu_{\vero_{i+2^n}''}(y) 
                        \right) \nabla_{x'}\Lambda_{\vero_i'}(x') \\
  &= \sum_{i=1}^{2^{n}} \left[ (q(\vero_i) - q(\vero_1)) \mu_{\vero_{i}''}(y) +
    ( q(\vero_{i+2^n}) - q(\vero_{1+2^n})) \mu_{\vero_{i+2^n}''}(y)
    \right] \nabla_{x'}\Lambda_{\vero_i}(x'),
\end{align*}
so that
\begin{align*}
  \| \nabla_{x'} q \|_{L^2(T,y^{\alpha})} & \lesssim
  \sum_{i=1}^{2^{n}} |q(\vero_i) - q(\vero_1)| 
      \|\mu_{\vero_{i}''}\nabla_{x'}\Lambda_{\vero_i}\|_{L^2(T,y^{\alpha})} \\
      & + \sum_{i=1}^{2^{n}} |q(\vero_{1+2^n})-q(\vero_{i+2^n})|
      \|\mu_{\vero_{i+2^n}''}\nabla_{x'}\Lambda_{\vero_i}\|_{L^2(T,y^{\alpha})}.
\end{align*}
This expression shows that the same techniques developed for the previous step allows us to 
obtain \eqref{v - PivH1}.

\noindent \boxed{3} \emph{Stability}. 
It remains to prove \eqref{dxPIcontinuous} and \eqref{dyPIcontinuous}. 
By the triangle inequality,
\begin{equation*}
 \| \partial_{y}\Pi_{\T_{\Y}} w\|_{L^2(T,y^{\alpha})} \leq 
 \| \partial_{y}(w- \Pi_{\T_{\Y}} w)\|_{L^2(T,y^{\alpha})} 
   +  \| \partial_y w\|_{L^2(T,y^{\alpha})},
\end{equation*}
so that it suffices to estimate the first term. Add and subtract $w_{\vero_1}$,
\begin{equation}
\label{dyw-Piw}
  \| \partial_{y}(w- \Pi_{\T_{\Y}} w)\|_{L^2(T,y^{\alpha})} \leq
  \| \partial_{y}(w- w_{\vero_1})\|_{L^2(T,y^{\alpha})} 
  + \| \partial_{y}(w_{\vero_1}- \Pi_{\T_{\Y}} w)\|_{L^2(T,y^{\alpha})}.
\end{equation}
Let us estimate the first term. The definition of 
$\psi_{\vero_1}$, together with $|y|^{\alpha} \in A_2(\R^{n+1})$ implies
$
  \| \psi_{\vero_1}\|_{L^2(\omega_{\vero_1},y^{-\alpha})} 
\| 1\|_{L^2(\omega_{\vero_1},y^{\alpha})} \lesssim 1,
$
whence invoking the definition \eqref{p1average} of the regularized 
Taylor polynomial $w_{\vero_1}$ yields
\[
  \|\partial_y w_{\vero_1}\|_{L^2(T,y^{\alpha})} \leq 
  \|\partial_y w\|_{L^2(\omega_{\vero_1},y^{\alpha})},
\]
and
\begin{equation}
\label{w_v1H1,2}
 \|\partial_{y}(w-w_{\vero_1}) \|_{L^2(T,y^{\alpha})}
\lesssim { \|\partial_{y}w \|_{L^2(T,y^{\alpha})}.}
\end{equation}

To estimate the second term of the right hand side of
\eqref{dyw-Piw}, we repeat the steps used to obtain \eqref{v - PivH1},
starting from \eqref{int:P-P,1}.
Integrating by parts and using that 
$\psi_{\vero_i} =0$ on $\partial \omega_{\vero_i}$, we get,
for $\ell = 1,\ldots,n+1$,
\begin{multline*}
   \int_{\omega_{\vero_i}} \partial_{x_\ell}w(x) (z_\ell-x_\ell) \psi_{\vero_i}(x) \diff x 
    = \int_{\omega_{\vero_i}} w(x) \psi_{\vero_i}(x)\diff x \\
    - \int_{\omega_{\vero_i}} w(x) (z_\ell-x_\ell) \partial_{x_\ell} \psi_{\vero_i}(x)\diff x,
\end{multline*}
whence
\begin{equation}
\label{I_1+I_2}
  \begin{aligned}
    \delta q(\vero_1) & = (n+2)\left(\int w(x) \psi_{\vero_{1+2^n}}\diff x - 
    \int w(x) \psi_{\vero_{1}}\diff x\right) \\
    & - \int w(x)( \vero_{1+2^n}- x)\cdot \nabla \psi_{\vero_{1+2^n}}(x)\diff x
    + \int w(x)( \vero_{1}- x) \cdot \nabla \psi_{\vero_{1}}(x)\diff x  \\
    & = I_1 + I_2.
  \end{aligned}
\end{equation}
To estimate $I_1$ we consider the same change of variables used to obtain \eqref{int:P-P,2}.
Define $G_z(t) = (n+2) \cdot w(\vero_1 - h_{\vero_1}\odot z+ t\theta)$, with $\theta$ as in 
\eqref{theta}, and observe that
\[
  I_1 = \int_0^1 \int  G_z'(t)\psi(z) \diff z \diff t
  = (n+2)\int_0^1 \int \partial_{y}w(\vero_1 - h_{\vero_1}\odot z+ t\theta) 
  \theta''\psi(z) \diff z \diff t.
\]
Introducing the change of variables $\tau = \vero_1 - h_{\vero_1} \odot z + t \theta$, we obtain
\begin{equation}
\label{I_1}
| I_1 | \lesssim \int_{\omega_T}  { \frac{1}{h_{\vero_1'}^n} }|\partial_{y}w(\tau)| \diff \tau
\leq  { \frac{1}{h_{\vero_1'}^n} } \|\partial_{y}w \|_{L^2(\omega_T,y^{\alpha})}  
\|1\|_{L^2(\omega_T,y^{-\alpha})}.
\end{equation}
We now estimate $I_2$. Changing variables,
\begin{align*}
  I_2 & = \int \left( 
  w(\vero_{1+2^n} - h_{\vero_{1+2^n}}\odot z) - w(\vero_{1} - h_{\vero_{1}}\odot z) \right) 
  z' \cdot \nabla_{x'} \psi(z) \diff z
  \\
  & + \int \left( w(\vero_{1+2^n} - h_{\vero_{1+2^n}}\odot z)z'' - 
   w(\vero_{1} - h_{\vero_{1}}\odot z)(\vartheta + z'') \right) \partial_y \psi(z) \diff z
  \\
  & = I_{2,1} + I_{2,2},
\end{align*}
where $\vartheta = (\vero_{1+2^n}^{n+1} - \vero_{1}^{n+1})/{ h_{\vero_1''} }$.
Arguing as in the derivation of \eqref{I_1} we obtain
\begin{align}
\label{I_2,1,2}
| I_{2,1} |,  | I_{2,2} |  \lesssim \int_{\omega_T}  
{ \frac{1}{h_{\vero_1'}^n} }|\partial_{y}w(\tau)| \diff \tau
\leq  { \frac{1}{h_{\vero_1'}^n} } \|\partial_{y}w \|_{L^2(\omega_T,y^{\alpha})}  
\|1\|_{L^2(\omega_T,y^{-\alpha})}.
\end{align}
Inserting \eqref{I_1} and \eqref{I_2,1,2} in \eqref{I_1+I_2} we deduce
\[
 |\delta q(\vero_1)| \lesssim { \frac{1}{h_{\vero_1'}^n} } \|\partial_{y}w \|_{L^2(\omega_T,y^{\alpha})}
 \|1\|_{L^2(\omega_T,y^{-\alpha})},
\]
whence
\begin{equation}
\label{deltaqv1}
 |\delta q(\vero_1)|\| \partial_y \lambda_{\vero_1}\|_{ L^2(T,y^\alpha) } \lesssim 
 \|\partial_{y}w \|_{L^2(\omega_T,y^{\alpha})},
\end{equation}
because $h_{\vero'_1}^{-n}  \|\partial_y\lambda_{\vero_1}\|_{L^2(\omega_T,y^\alpha)}
\|1\|_{L^2(\omega_T,y^{-\alpha})}\le C$.
Replacing \eqref{deltaqv1} in \eqref{partial_yq}, we get 
\[
  \| \partial_{y}(w_{\vero_1}- \Pi_{\T_{\Y}} w)\|_{L^2(T,y^{\alpha})} \lesssim 
  \| \partial_y w\|_{L^2(\omega_T,y^{\alpha})},
\]
which, together with \eqref{dyw-Piw} and \eqref{w_v1H1,2}, imply the desired result \eqref{dyPIcontinuous}.
Similar arguments are used to prove the stability bound \eqref{dxPIcontinuous}.
\end{proof}

\subsubsection{Weighted $H^1$-based interpolation estimates on boundary elements}
\label{sub:sub:H1_boundary_estimates}
Let us now extend the interpolation estimates of \S~\ref{sub:sub:H1_estimates}
to elements that intersect the Dirichlet boundary, where the functions to be
approximated vanish. To do so, we adapt the results of \cite[Theorem 3.1]{DL:05} to our particular case.

We consider, as in \cite[Section 3]{DL:05}, different cases according to the 
relative position of the element $T$ in $\T_{\Y}$. We define
the non-overlapping sets
%
\begin{align*}
 \C_1 &= \left\{ T \in \T_{\Y}: \partial T \cap \Gamma_D = \emptyset \right\}, \\
  \C_2 &=  \left\{ T \in \T_{\Y}: \partial T \cap \partial_L
    \C_{\Y}  \neq \emptyset \right\}, \\
  \C_3 &=  \left\{T \in \T_{\Y}: 
            \partial T \cap \left( \partial \Omega \times \{ \Y \}
            \right) \neq \emptyset \right\}.
\end{align*}
The elements in $\C_1$ are interior, so the corresponding interpolation estimate is given in 
Theorem~\ref{TH:v - PivH1}.
Interpolation estimates on elements in $\C_3$ are a direct consequence of 
\cite[Theorem 3.1]{DL:05} and Theorem \ref{TH:boundary_estimates} below.
This is so due to the fact that, since $\Y > 1$, the weight $y^\alpha$ 
over $\C_3$ is no longer singular nor degenerate. It remains only to provide interpolation estimates
for elements in $\C_2$.

\begin{theorem}[Local error interpolation estimate: elements in $\C_2$]
\label{TH:boundary_estimates}
Let $T \in \C_{2}$ and $w \in H^1(\omega_T,y^\alpha)$ vanish on
$\partial T \cap \partial_L \C_{\Y}$.
Then, we have the stability bounds
\begin{align}
\label{stab-boundx'}
  \|\nabla_{x'} \Pi_{\T_{\Y}} w \|_{L^2(T,y^{\alpha})} &\lesssim 
  \| \nabla_{x'} w \|_{L^2(\omega_T,y^{\alpha})},
  \\
\label{stab-boundy}
  \|\partial_{y}\Pi_{\T_{\Y}} w \|_{L^2(T,y^{\alpha})} &\lesssim 
  \| \partial_y w \|_{L^2(\omega_T,y^{\alpha})},
\end{align}
If, in addition, $w \in H^2(\omega_T,y^\alpha)$, then,
for $j=1,\ldots,n+1$,
\begin{equation}
\label{C_2_element}
   \| \partial_{x_j}(w - {\Pi_{\T_{\Y}}} w)\|_{L^2(T,y^{\alpha})} \lesssim h_{\verot'}
   \| \partial_{x_j}\nabla_{x'} w \|_{L^2(\omega_T,y^{\alpha})}
  + h_{\verot''}\| {\partial_{x_jy}} w \|_{L^2(\omega_T,y^{\alpha})}.
\end{equation} 
\end{theorem}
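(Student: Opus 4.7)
The strategy is to adapt the argument of Theorem \ref{TH:v - PivH1} to elements touching $\partial_L\C_\Y$, replacing the role of the averaged Taylor polynomial $w_{\vero_1}$ at an interior vertex by the zero function, since $w$ vanishes on a face of $T$ and $\Pi_{\T_\Y}w$ vanishes at every Dirichlet vertex. The key new analytical ingredient is a weighted Poincar\'e--Friedrichs inequality on $\omega_T$ for functions vanishing on $\partial\omega_T \cap \partial_L\C_\Y$.

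\emph{Step 1: Weighted Poincar\'e--Friedrichs inequality.} For $T\in\C_2$, the patch $\omega_T$ contains a portion $F = \overline\omega_T \cap \partial_L\C_\Y$ of the lateral boundary of $\C_\Y$ with $(n)$-dimensional measure comparable to $h_{\vero'}^{n-1}h_{\vero''}$. After pulling back to a reference configuration via the scaling map $\Fm_{\vero}$ introduced in Lemma \ref{LM:approximation}, and mimicking the constructive proof of Lemma \ref{le:Poincareweighted} (taking $\widetilde w = \xa w$ without subtracting a mean value, and using instead that $w\equiv 0$ on $F$ so that the boundary terms arising from integrating $-\DIV F = \widetilde w$ by parts vanish), one obtains
\[
  \|w\|_{L^2(\omega_T,y^\alpha)} \lesssim
  h_{\vero'}\|\nabla_{x'} w\|_{L^2(\omega_T,y^\alpha)}
  + h_{\vero''}\|\partial_y w\|_{L^2(\omega_T,y^\alpha)}.
\]
The constant depends only on $\sigma_\Omega$, $\sigma$ and on the Muckenhoupt constant $C_{2,y^\alpha}$ of $|y|^\alpha$, which is uniform in the position of $T$.

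\emph{Step 2: Stability estimates \eqref{stab-boundx'}--\eqref{stab-boundy}.} Since $\Pi_{\T_\Y}w(\vero)=0$ whenever $\vero \in \Gamma_D$, we have
\[
  \Pi_{\T_\Y}w = \sum_{\vero \in T\cap \N_{\textrm{in}}} w_{\vero}(\vero)\,\lambda_{\vero}.
\]
For each such interior vertex $\vero_*$, proceed as in the bound of $\|w_{\vero_i}\|_{L^\infty(T)}$ in Theorem \ref{TH:v - PivL2}: integration by parts in \eqref{p1average} using $\psi_{\vero_*}=0$ on $\partial\omega_{\vero_*}$, Cauchy--Schwarz, and the fact that $|y|^{\alpha}\in A_2(\R^{n+1})$ give
\[
  |w_{\vero_*}(\vero_*)|\,\|\partial_y\lambda_{\vero_*}\|_{L^2(T,y^\alpha)}
  \lesssim \tfrac{1}{h_{\vero_*''}}\|w\|_{L^2(\omega_{\vero_*},y^\alpha)},
\]
and analogously for $\nabla_{x'}\lambda_{\vero_*}$ with $\tfrac{1}{h_{\vero_*'}}$. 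Invoking Step 1 on $\omega_{\vero_*}\subset\omega_T$ absorbs the factor $h_{\vero''}^{-1}$ (respectively $h_{\vero'}^{-1}$) and yields the bounds by $\|\partial_y w\|_{L^2(\omega_T,y^\alpha)}$ and $\|\nabla_{x'}w\|_{L^2(\omega_T,y^\alpha)}$ respectively.

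\emph{Step 3: Error bound \eqref{C_2_element}.} Exploit the Cartesian product structure $T = K\times I$ and the special labelling of vertices used in Theorem \ref{TH:v - PivH1}. Writing $\partial_{x_j}(w-\Pi_{\T_\Y}w)$ using the partition of unity identity for the $\Q_1$ basis, it reduces to the same kind of node-difference expressions $\delta q(\vero_i) = q(\vero_i)-q(\vero_k)$ as in the interior case, but now some of the involved vertices are Dirichlet nodes where $\Pi_{\T_\Y}w$ vanishes outright. For these, one substitutes the trivial identity $0 = w(\vero_D)$ (valid in the averaged sense since $w=0$ on the Dirichlet face) and repeats the parametric integration along the segment joining $\vero_i$ to $\vero_k$, as in \eqref{wIoft}--\eqref{Iaux}. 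This produces estimates of $\delta q(\vero_i)$ by $h_{\vero'}\|\nabla_{x'}\partial_{x_j} w\|_{L^2(\omega_T,y^\alpha)}+h_{\vero''}\|\partial_{x_jy}w\|_{L^2(\omega_T,y^\alpha)}$, exactly as in Theorem \ref{TH:v - PivH1}; multiplying by $\|\partial_{x_j}\lambda_{\vero_i}\|_{L^2(T,y^\alpha)}$ and invoking the $A_2$ property yields \eqref{C_2_element}.

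\emph{Main obstacle.} The principal difficulty lies in Step 1: establishing the weighted Poincar\'e--Friedrichs inequality with explicit control of the constant in terms of the Muckenhoupt bound and the shape parameters $\sigma_\Omega$, $\sigma$, uniformly in the position of $T$ along $\partial_L\C_\Y$ and independently of $a,b$ in the weight $\xa$. Once this inequality is available, the rest of the argument consists of routine, although tedious, adaptations of the interior-element techniques already developed in Theorems \ref{TH:v - PivL2} and \ref{TH:v - PivH1}, exploiting the tensor product structure $T=K\times I$ to handle the anisotropy in the extended variable.
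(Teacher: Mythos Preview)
Your Step~2 contains a genuine gap for the $\partial_y$--stability \eqref{stab-boundy}. The lateral boundary $\partial_L\C_\Y=\partial\Omega\times[0,\infty)$ is a face perpendicular to an $x'$--direction, so a Poincar\'e--Friedrichs inequality on $\omega_T$ for functions vanishing there controls $\|w\|_{L^2(\omega_T,y^\alpha)}$ only by $h_{\vero'}\|\nabla_{x'}w\|_{L^2(\omega_T,y^\alpha)}$, \emph{not} by $h_{\vero''}\|\partial_y w\|_{L^2(\omega_T,y^\alpha)}$. With the bound you wrote, dividing by $h_{\vero''}$ leaves the cross term $\tfrac{h_{\vero'}}{h_{\vero''}}\|\nabla_{x'}w\|$, which blows up on the anisotropic meshes that are the whole point of the theory. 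Even with the sharper directional version, the same ratio appears. So the route ``bound $|w_{\vero_*}(\vero_*)|$ by $\|w\|_{L^2}$ and then apply PF'' cannot yield \eqref{stab-boundy}.

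Step~3 is also too loose. The sentence ``substitute the trivial identity $0=w(\vero_D)$'' does not make the node--difference structure the same as in the interior case: for a Dirichlet vertex $\vero_1$ and the interior vertex $\vero_2$, the quantity $q(\vero_1)-q(\vero_3)$ equals $w_{\vero_2}(\vero_1)-w_{\vero_2}(\vero_3)$, which is \emph{not} of the form $w_{\vero_{i+2^n}}(\vero_{i+2^n})-w_{\vero_1}(\vero_{i+2^n})$ treated in \eqref{int:P-P,1}. The paper's mechanism is different and more direct: one writes, e.g., $w_{\vero_2}(\vero_3)-w_{\vero_2}(\vero_1)=(\vero_3''-\vero_1'')\int\partial_y w\,\psi_{\vero_2}$ and then uses the fundamental theorem of calculus in the $x'$--direction \emph{from the Dirichlet face} (where $w=0$, hence $\partial_y w=0$) to convert this into $(\vero_3''-\vero_1'')\int\!\int_0^{x'}\partial_{x'y}w\,d\sigma\,\psi_{\vero_2}$, gaining exactly one factor $h_{\vero'}$. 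This FTC--from--the--boundary trick is what makes both \eqref{stab-boundy} and \eqref{C_2_element} come out with the correct directional scaling; a Poincar\'e--Friedrichs detour through $\|w\|_{L^2}$ loses that information.
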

\begin{proof}
For simplicity we present the proof in two dimensions. Let $T = (0,a) \times (0,b) \in \C_2$. 
Notice that over such an element the weight
becomes degenerate or singular. Recall the local enumeration of vertices introduced in the proof of
Theorem~\ref{TH:v - PivH1} (see also Figure~\ref{fig:prism}). By
the definition of $\Pi_{\T_{\Y}}$ we have
\begin{equation}\label{Pi-bound}
 \Pi_{\T_{\Y}} w |_{T} = w_{\vero_2}(\vero_2) \lambda_{\vero_2} + 
  w_{\vero_4}(\vero_4) \lambda_{\vero_4},
\end{equation}
The proofs of \eqref{stab-boundx'} and \eqref{stab-boundy} are similar to Step 3 of Theorem
\ref{TH:v - PivH1}. To show \eqref{C_2_element}, we write the local difference between a function and 
its interpolant as
$(w - \Pi_{\T_{\Y}} w) |_{T} = (w - w_{\vero_2})|_{T} + (w_{\vero_2} - \Pi_{\T_{\Y}})|_{T}$. 
Proceeding as in the proof of Lemma~\ref{LM:approximation}, 
we can bound $\partial_{x_j}(w - w_{\vero_2})|_{T}$ for $j=1,2$, in the $L^2(T,y^\alpha)$-norm, by the
right hand side of \eqref{C_2_element}
because this is independent of the trace of $w$. It remains then to derive a bound for
$(w_{\vero_2} - \Pi_{\T_{\Y}}w)|_{T}$, for which we consider two separate cases.

\noindent \boxed{1} \emph{Derivative in the extended direction.} We use
$w_{\vero_2}\in \Q_1$, \eqref{Pi-bound} and $\Pi_{\T_{\Y}}w(\vero_1)=\Pi_{\T_{\Y}}w(\vero_3)=0$,
to write
\[
  \partial_{y}(w_{\vero_2} - \Pi_{\T_{\Y}}w)|_{T} = 
  \left( w_{\vero_2}(\vero_3) - w_{\vero_2}(\vero_1) \right) \partial_y \lambda_{\vero_3} + 
  \left( w_{\vero_2}(\vero_4) - w_{\vero_4}(\vero_4) \right) \partial_y \lambda_{\vero_4}.
\]
Since $w \equiv 0 $ on $\{0\}\times(0,b)$, then $\partial_y w \equiv 0$ on $\{0\}\times(0,b)$.
By the definition of the Taylor polynomial $P$, given in \eqref{Ptaylor},
and the fact that $\vero_1' = \vero_3'$, we obtain
\begin{align*}
  w_{\vero_2}(\vero_3) - w_{\vero_2}(\vero_1) & = (\vero_3'' - \vero_1'')
  \int_{{\omega_T}} \partial_y w(x) \psi_{\vero_2}(x) \diff x \\
  & = (\vero_3'' - \vero_1'')
  \int_{{\omega_T}} \int_{0}^{x'} \partial_{x'y} w(\sigma,y) \psi_{\vero_2}(x',y)
  \diff \sigma \diff x'\diff y.
\end{align*}
Therefore
\begin{align*}
|w_{\vero_2}(\vero_3) - w_{\vero_2}(\vero_1)| & \lesssim h_{\vero_1''}
h_{\vero_1'}\| \partial_{x'y} w \|_{L^2(\omega_T,y^{\alpha})}
\| \psi_{\vero_2} \|_{L^2(\omega_T,y^{-\alpha})}
\\
& \lesssim h_{\vero_1''}
h_{\vero_1'} \frac{h_{\vero_1'}^{\sr}}{h_{\vero_2'} h_{\vero_2''}}
\left( \int_{0}^{b} y^{-\alpha} \diff y\right)^{\sr}
\| \partial_{x'y} w \|_{L^2(\omega_T,y^{\alpha})}.
\end{align*}
Since, in view of the weak shape regularity assumption on the mesh $\T_{\Y}$,
$h_{\vero_1'}\approx h_{\vero_2'}$, $h_{\vero_1''}= h_{\vero_2''}$,  and 
$y^{\alpha} \in A_2(\R_{+}^{n+1})$, we conclude that
\begin{equation}
\label{boundary_aux_1}
  \begin{aligned}
    |w_{\vero_2}(\vero_3) - w_{\vero_2}(\vero_1)|
    \| \partial_y \lambda_{\vero_3} \|_{L^2(T,y^{\alpha})}  & \lesssim 
    \frac{h_{\vero_1'}}{h_{\vero_1''}}
    \left( \int_{0}^{b} y^{-\alpha} \diff y 
    \int_{0}^{b} y^{\alpha} \diff y \right)^{\sr} \times \\
    &\times
    \| \partial_{x'y} w \|_{L^2(\omega_T,y^{\alpha})} \\
     &\lesssim h_{\vero_1'}\| \partial_{x'y} w \|_{L^2(\omega_T,y^{\alpha})}.
  \end{aligned}
\end{equation}
Finally, to bound $w_{\vero_2}(\vero_4) - w_{\vero_4}(\vero_4)$, we proceed as in Step 1 of the proof of
Theorem~\ref{TH:v - PivH1}, which is valid regardless of the trace of
$w$, and deduce
\begin{equation*}
|w_{\vero_2}(\vero_4) - w_{\vero_4}(\vero_4)|\| \partial_y \lambda_{\vero_3} \|_{L^2(T,y^{\alpha})} 
\lesssim h_{\vero_1'}\| \partial_{x'y} w \|_{L^2(\omega_T,y^{\alpha})}
+ h_{\vero_1''}\| \partial_{yy} w \|_{L^2(\omega_T,y^{\alpha})}.
\end{equation*}
This, in conjunction with the previous estimate, yields \eqref{C_2_element}
for the derivative in the extended direction.

\noindent \boxed{2} \emph{Derivative in the $x'$ direction.} To estimate 
$ \partial_{x'}(w_{\vero_2} - \Pi_{\T_{\Y}}w)|_{T}$
we proceed as in Theorem~\ref{TH:v - PivH1} and \cite[Theorem 3.1]{DL:05}, 
but we cannot exploit the symmetry of the tensor product structure now. For brevity, 
we shall only point out the main technical differences. Using, again, that
$(w_{\vero_2} - \Pi_{\T_{\Y}}w) \in \Q_1$, 
\begin{align*}
 \partial_{x'}(w_{\vero_2} - \Pi_{\T_{\Y}}w)|_T & = w_{\vero_2}(\vero_1)\partial_{x'}\lambda_{\vero_1}
+ w_{\vero_2}(\vero_3)\partial_{x'}\lambda_{\vero_3}
+ (w_{\vero_2}(\vero_4) - w_{\vero_4}(\vero_4))\partial_{x'}\lambda_{\vero_4}
\\
& =  w_{\vero_2}(\vero_1)\partial_{x'}\lambda_{\vero_1}
+ (w_{\vero_2}(\vero_4) - w_{\vero_2}(\vero_3))\partial_{x'}\lambda_{\vero_4}
\\
& - (w_{\vero_4}(\vero_4) - w_{\vero_4}(\vero_3))\partial_{x'}\lambda_{\vero_4}
- w_{\vero_4}(\vero_3) \partial_{x'}\lambda_{\vero_4}
\\
& = J(w_{\vero_2},w_{\vero_4}) \partial_{x'}\lambda_{\vero_4} 
+ w_{\vero_2}(\vero_1)\partial_{x'}\lambda_{\vero_1}
- w_{\vero_4}(\vero_3) \partial_{x'}\lambda_{\vero_4},
\end{align*}
where
\[
 J(w_{\vero_2},w_{\vero_4}) = \left( w_{\vero_2}(\vero_4) - w_{\vero_2}(\vero_3)\right)
 - \left( w_{\vero_4}(\vero_4) - w_{\vero_4}(\vero_3) \right).
\] 
Define $\theta=(0,\theta'')=(0,{ \vero_4^2-\vero_2^2 - (h_{\vero_4''}-h_{\vero_2''})z'' })$,
and rewrite $J(w_{\vero_2},w_{\vero_4})$ as follows:
\begin{align*}
J(w_{\vero_2},w_{\vero_4}) & = (\vero_4' - \vero_3') \int_D
 \left( \partial_{x'}w(\vero_2 - h_{\vero_2}\odot z)-
\partial_{x'}w(\vero_4 - h_{\vero_4}\odot z) \right) \psi(z) \diff z 
\\
  & = -  (\vero_4' - \vero_3') \int_D \int_0^1
  \partial_{x'y} w(\vero_2 - h_{\vero_2}\odot z + \theta t) \theta''
  \psi(z) \diff t \diff z,
\end{align*}
where $D=\supp\psi$. Denote
\[
 I(t) =  \int |\partial_{x'y} w(\vero_2 - h_{\vero_2}\odot z + \theta t) \theta'' | \diff z.
\]
Using the change of variables $z \mapsto \tau = \vero_2 - h_{\vero_2}\odot z + \theta t$, results in
\begin{align*}
 | I(t)|  & \lesssim  
 \frac{1}{h_{\vero_2'}} \int_{\omega_T} | \partial_{x'y} w(\tau) | \psi(\tau) \diff \tau
  \lesssim  \frac{1}{h_{\vero_2'}} \| \partial_{x'y} w \|_{L^2(\omega_T,y^{\alpha})}
  \| 1 \|_{L^2(\omega_T,y^{-\alpha})} \\
  & \lesssim {h_{\vero_2'}^{-\sr}} \| \partial_{x'y} w \|_{L^2(\omega_T,y^{\alpha})}
  \left(\int_{0}^{b}y^{-\alpha} \diff y \right)^{\sr},
\end{align*}
whence
$
  \left|J(w_{\vero_2},w_{\vero_4})\right|\lesssim
  h_{\vero_2'}^{\sr} \| \partial_{x'y} w \|_{L^2(\omega_T,y^{\alpha})}
  \left(\int_{0}^{b}y^{-\alpha} \diff y \right)^{\sr}
$. This implies
\begin{align*}
\| {J(w_{\vero_2},w_{\vero_4})} \partial_{x'} \lambda_{\vero_4} \|_{L^2(T,y^{\alpha})}
& \lesssim 
\left(\int_{0}^{b}y^{-\alpha} \diff y \right)^{\sr}
\left(\int_{0}^{b}y^{\alpha} \diff y \right)^{\sr}
\| \partial_{x'y} w \|_{L^2(\omega_T,y^{\alpha})}
\\
& \lesssim h_{\vero_2''}\| \partial_{x'y} w \|_{L^2(\omega_T,y^{\alpha})},
\end{align*}
which follows from the fact that $y^\alpha \in A_2(\R^+)$, and then \eqref{C_2_element} holds true.
  
The estimate of $w_{\vero_2}(\vero_1)\partial_{x'}\lambda_{\vero_2}$
exploits the fact that the trace of $w$ vanishes on
$\partial_L\C_\Y$; the same happens with 
$w_{\vero_4}(\vero_3) \partial_{x'}\lambda_{\vero_4}$. In fact, we can
write
\begin{align*}
w_{\vero_2}(\vero_1) &= \int_{\omega_{\vero_2}} \int_0^{x'} 
\left( \partial_{x'} w(\tau,y) - \partial_{x'} w(x',y) \right)
\psi_{\vero_2}(x',y) \diff \tau \diff x' \diff y
\\
& + \int_{\omega_{\vero_2}}  \left( \partial_y w(0,y)
- \partial_y w(x',y) \right) y 
\psi_{\vero_2}(x',y) \diff x' \diff y.
\end{align*}

To derive \eqref{C_2_element} we finally proceed as 
in the proofs of Theorem~\ref{TH:v - PivH1}
and \cite[Theorem 3.1]{DL:05}. We omit the details.
\end{proof}

\section{Error estimates}
\label{sec:ErrorEstimate}
The estimates of \S~\ref{sub:sub:H1_estimates} and \S~\ref{sub:sub:H1_boundary_estimates}
are obtained under the local assumption that $w \in H^2(\omega_T,y^{\alpha})$.
However, the solution $\ve$ of \eqref{alpha_harmonic_extension_weak}
satisfies $\ve_{yy} \in L^2(\C,y^\beta)$ only when
$\beta > 2\alpha + 1$, according to Theorem~\ref{TH:regularity}. 
For this reason, in this section we 
derive error estimates for both quasi-uniform and graded
meshes. The estimates of \S~\ref{sub:quasi} for quasi-uniform meshes
are quasi-optimal in terms of regularity but suboptimal in terms of order.
The estimates of \S~\ref{sub:graded} for graded meshes are, instead,
quasi-optimal in both regularity and order. Mesh anisotropy is able
to capture the singular behavior of the solution and restore optimal decay rates.

\subsection{Quasi-uniform meshes}
\label{sub:quasi}
We start with a simple one dimensional case ($n=1$) and assume that we need to approximate
over the interval $[0,\Y]$ the function $w(y) = y^{1-\alpha}$.
Notice that $w_y(y) \approx y^{-\alpha}$ as $y \approx 0^+$ has the same behavior as the 
derivative in the extended direction of the $\alpha$-harmonic extension $\ve$.

Given $M \in \mathbb{N}$ we consider the uniform partition of the interval $[0,\Y]$ 
\begin{equation}
\label{unif_mesh}
  y_k = \frac{k}{M} \Y, \quad k=0,\dots,M.
\end{equation}
and corresponding elements $I_k =[y_k,y_{k+1}]$ of size $h_k = h = \Y/M$ for $k=0,\dots,M-1$.

We can adapt the definition of $\Pi_{\T_{\Y}}$ of \S~\ref{sub:intweighted} to this setting, and
bound the local interpolation errors
$E_k = \|\partial_y( w - \Pi_{\T_{\Y}}w)\|_{L^2(I_k,y^{\alpha})}$.
For $k=2,\ldots,M-1$, since $y \geq h$ and $\alpha < 2\alpha + 1 < \beta$, \eqref{v - PivH1}
implies
\begin{equation}
\label{E_k_u} 
  E_k^2 \lesssim h^2 \int_{\omega_{I_k}} y^{\alpha} |w_{yy}|^2 \diff y
  \lesssim h^{2 + \alpha -\beta} \int_{\omega_{I_k}} y^{\beta} |w_{yy}|^2 \diff y,
\end{equation}
because $\left(\frac{y}{h}\right)^\alpha \le \left(\frac{y}{h}\right)^\beta.$
The estimate for $E_0^2+E_1^2$ follows from from the stability
of the operator $\Pi_{\T_{\Y}}$ \eqref{dyPIcontinuous} and \eqref{stab-boundy}:
\begin{equation}
\label{E_1_u}
  E_0^2+ E_1^2 \lesssim \int_0^{3h} y^\alpha |w_y|^2 \lesssim h^{1-\alpha},
\end{equation}
because $w(y)\approx y^{-\alpha}$ as $y\approx 0^+$.
Using \eqref{E_k_u} and \eqref{E_1_u} 
in conjunction with $2+\alpha-\beta<1-\alpha$, we obtain a global interpolation estimate
\begin{equation}
\| \partial_{y}(w -\Pi_{\T_{\Y}}w ) \|_{L^2((0,\Y),y^{\alpha})} 
\lesssim h^{(2+\alpha-\beta)/2}.
\end{equation}

These ideas can be extended to prove an error estimate for $\ve$ on uniform meshes.

\begin{theorem}[Error estimate for quasi-uniform meshes]
\label{thm:quasiuniform}
Let $\ve$ solve \eqref{alpha_harmonic_extension_weak}, and $V_{\T_{\Y}}$ be the 
solution of \eqref{harmonic_extension_weak}, constructed over a quasi-uniform mesh
of size $h$. If $\Y \approx |\log h |$, then for all $\epsilon >0$
\begin{equation}
  \label{sub_optimal}
   \| \nabla(\ve - V_{\T_{\Y}} ) \|_{L^2(\C_{\Y},y^{\alpha})}  
   \lesssim { h^{s-\varepsilon} }\| f \|_{\Ws}.
\end{equation}
where the hidden constant blows up if $\varepsilon$ tends to $0$.
\end{theorem}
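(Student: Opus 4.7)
\medskip

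\textit{Proof proposal.} The plan is to combine three ingredients: (i) the abstract bound \eqref{estimate_ve0}, which reduces everything to an interpolation error on $\C_\Y$; (ii) the tangential regularity \eqref{reginx} of Theorem~\ref{TH:regularity}, which is sharp in the weight $y^\alpha$; and (iii) a one-dimensional splitting argument in the extended variable, which trades the lack of $L^2(y^\alpha)$-integrability of $\ve_{yy}$ for a suboptimal power of $h$. Choose $\Y \approx |\log h|$ large enough that $e^{-\sqrt{\lambda_1}\Y/4} \lesssim h^s$, so that the truncation contribution $\epsilon\|f\|_{\Hs'}$ in \eqref{estimate_ve0} is already of order $h^s$. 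It then suffices to bound $\|\nabla(\ve_0 - \Pi_{\T_\Y}\ve_0)\|_{L^2(\C_\Y, y^\alpha)}$ where, as noted after \eqref{estimate_ve0}, $\ve_0$ inherits the regularity of $\ve$ up to terms controlled by $\|f\|_{\Hs'}$.

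First I would handle the tangential derivatives $\nabla_{x'}(\ve_0 - \Pi_{\T_\Y}\ve_0)$. Summing the local estimate \eqref{v - PivH1} (and its boundary counterpart \eqref{C_2_element}) over the quasi-uniform mesh and using \eqref{reginx} gives
\begin{equation*}
  \|\nabla_{x'}(\ve_0 - \Pi_{\T_\Y}\ve_0)\|_{L^2(\C_\Y,y^\alpha)}
  \lesssim h \bigl( \|\Delta_{x'}\ve\|_{L^2(\C,y^\alpha)} + \|\partial_y\nabla_{x'}\ve\|_{L^2(\C,y^\alpha)} \bigr)
  \lesssim h \,\|f\|_{\Ws},
\end{equation*}
which is of order $h \le h^{s-\varepsilon}$. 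This step is essentially routine since the tangential second derivatives live in the \emph{correct} weighted space.

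The real obstacle, and the content that forces the suboptimal exponent, is the normal derivative $\partial_y(\ve_0 - \Pi_{\T_\Y}\ve_0)$, since $\ve_{yy}\notin L^2(\C,y^\alpha)$ but only belongs to $L^2(\C,y^\beta)$ for any $\beta>2\alpha+1$. I would mimic the scalar calculation carried out before the statement of the theorem: split the mesh into the boundary slab $\mathcal{S}_0 := \Omega\times(0,c h)$ and the bulk $\mathcal{S}_1 := \Omega\times(c h,\Y)$ for a fixed $c$ depending on the shape-regularity constants. On $\mathcal{S}_0$ I invoke the stability bound \eqref{dyPIcontinuous}--\eqref{stab-boundy}, which gives
\begin{equation*}
  \|\partial_y(\ve_0 - \Pi_{\T_\Y}\ve_0)\|_{L^2(\mathcal{S}_0,y^\alpha)}^2
  \lesssim \|\partial_y \ve\|_{L^2(\mathcal{S}_0,y^\alpha)}^2 + \text{l.o.t.},
\end{equation*}
and this is controlled by $h^{1-\alpha}\|f\|_{\Ws}^2$ by using the asymptotic $\ve_y\approx y^{-\alpha}$ from \eqref{asymptoticve} (made rigorous via Theorem~\ref{T:local} with the roles of $\alpha$ and $\beta$ compared on a thin slab). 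On $\mathcal{S}_1$, where $y\ge c h$, I use the pointwise inequality $y^\alpha \le (c h)^{\alpha-\beta} y^\beta$, apply \eqref{v - PivH1} element by element, and invoke Theorem~\ref{TH:regularity} to get
\begin{equation*}
  \|\partial_y(\ve_0 - \Pi_{\T_\Y}\ve_0)\|_{L^2(\mathcal{S}_1,y^\alpha)}^2
  \lesssim h^{2+\alpha-\beta} \|\ve_{yy}\|_{L^2(\C,y^\beta)}^2
  \lesssim h^{2+\alpha-\beta} \|f\|_{L^2(\Omega)}^2.
\end{equation*}

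Finally I balance the two contributions. Recalling $\alpha=1-2s$, the exponent $(2+\alpha-\beta)/2$ is maximized as $\beta\downarrow 2\alpha+1$, yielding the limiting rate $(1-\alpha)/2 = s$, which cannot be attained but is approached up to an arbitrarily small loss $\varepsilon$. Choosing $\beta = 2\alpha+1+2\varepsilon$ produces the claimed bound $h^{s-\varepsilon}\|f\|_{\Ws}$, and the constant blows up as $\varepsilon\downarrow 0$ because the hidden constant in Theorem~\ref{TH:regularity} scales like $(\beta-2\alpha-1)^{-1}$ through the integral $\int_0^1 z^{\beta-2-2\alpha}\,\diff z$ appearing in \eqref{reg_y_1}. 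Collecting the tangential and normal contributions, plus the truncation error, yields \eqref{sub_optimal}.
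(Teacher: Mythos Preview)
Your proof is correct and follows the same route as the paper's (very terse) argument: reduce to interpolation via \eqref{estimate_ve0}, control the bulk $\{y\gtrsim h\}$ by the weight comparison $y^\alpha\le h^{\alpha-\beta}y^\beta$ together with Theorem~\ref{TH:regularity}, control the boundary slab by stability plus the asymptotic $\ve_y\approx y^{-\alpha}$, and finally let $\beta\downarrow 2\alpha+1$ to recover $h^{s-\varepsilon}$. One small correction: Theorem~\ref{T:local} does not give the slab bound $\|\partial_y\ve\|^2_{L^2(\Omega\times(0,ch),y^\alpha)}\lesssim h^{1-\alpha}\|f\|_{\Ws}^2$, since it concerns second rather than first derivatives; that estimate is obtained instead from the explicit series \eqref{exactforms} and the small-argument behavior of $\psi_k'$, mirroring the scalar calculation \eqref{E_1_u}.
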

\begin{proof}
Use first Theorem~\ref{Thm:v-v^T} and Theorem~\ref{TH:upper_bound_1},
combined with \eqref{estimate_ve0}, to reduce the approximation
error to the interpolation error of $\ve$. Repeat next the steps leading to \eqref{E_k_u}--\eqref{E_1_u}, but
combining the interpolation estimates  of Theorems \ref{TH:v - PivH1} and \ref{TH:boundary_estimates} with
the regularity results of Theorem~\ref{TH:regularity}.
\end{proof}

\begin{remark}[Sharpness of \eqref{sub_optimal} for $s\ne\frac12$]
\label{optimal-reg} \rm
According to \eqref{asymptoticve} and \eqref{reginx}, $\partial_y\ve\approx y^{-\alpha}$, 
and this formally implies $\partial_y \ve \in H^{s-\varepsilon}(\C,y^{\alpha})$ for
all $\varepsilon>0$ provided $f\in\Ws$. In this sense
\eqref{sub_optimal} appears to be sharp with respect to regularity
even though it does not exhibit the optimal rate. We verify this
argument via a simple numerical illustration for dimension $n=1$. 
We let $\Omega = (0,1)$, $s=0.2$, 
right hand side $f = \pi^{2s} \sin(\pi x)$, and note that
$u(x)=\sin(\pi x)$, and the solution $\ve$ to \eqref{alpha_harm_intro}
is
\[
   \ve(x,y) = \frac{2^{1-s}\pi^{s}}{\Gamma(s)} \sin (\pi x) K_{s}(\pi y).
\]
Figure~\ref{fig:0.2_uniform} shows the rate of convergence for the 
$H^1(\C_\Y,y^{\alpha})$-seminorm.
Estimate \eqref{sub_optimal} predicts a rate of $h^{-0.2-\varepsilon}$.
We point out that for the $\alpha$-harmonic extension we are solving a two dimensional
problem and, since the mesh $\T_{\Y}$ is quasi-uniform, $\# \T_{\Y} \approx h^{-2}$. 
In other words the rate of convergence, when measured in term of degrees of freedom, 
is $(\# \T_{\Y})^{-0.1-\varepsilon}$,
which is what Figure~\ref{fig:0.2_uniform} displays.
\end{remark}

\begin{remark}[Case $s = \srn$]
\rm Estimate \eqref{sub_optimal} does not hold for $s=\srn$.
In this case there is no weight and the scaling issues in \eqref{E_k_u} are no longer present,
so that $E_k \lesssim h \| v \|_{H^2(I_k)}$. We thus obtain the optimal error
estimate
\[
   \| \nabla(\ve - V_{\T_{\Y}} ) \|_{L^2(\C_{\Y})}  
   \lesssim h\| f \|_{H^{1/2}_{00}(\Omega)}.
\]
\end{remark}

\begin{figure}
\label{fig:0.2_uniform}
  \centering
  \includegraphics[width=0.6\textwidth]{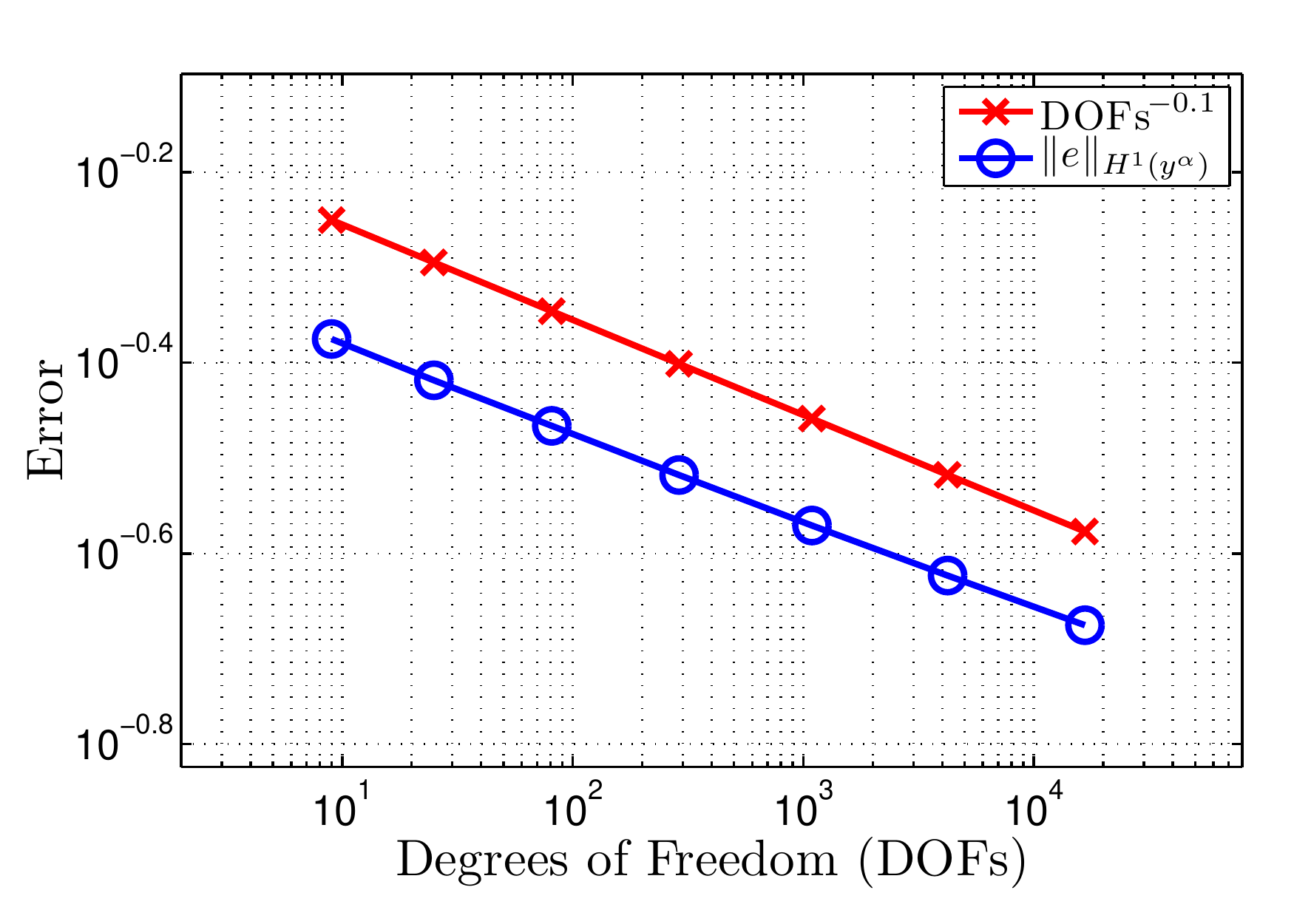}
  \caption{Computational rate of convergence for quasi-uniform meshes, $s=0.2$, and $n=1$.}
\end{figure}

\subsection{Graded meshes}
\label{sub:graded}
The estimate \eqref{sub_optimal} can be written equivalently
\[
   \| \nabla(\ve - V_{\T_{\Y}} ) \|_{L^2(\C_{\Y},y^{\alpha})}  
   \lesssim (\#\T_\Y)^{-\frac{s-\varepsilon}{n+1}} \| f \|_{\Ws},
\]
for quasi-uniform meshes in dimension $n+1$. We now show how to 
compensate the singular behavior in the extended variable $y$ by 
anisotropic meshes and restore the optimal convergence rate
$-1/(n+1)$.

As in \S~\ref{sub:quasi} we start the discussion in dimension $n=1$ with the function
$w(y) = y^{1-\alpha}$ over $[0,\Y]$.
We consider the graded partition $\T_\Y$ of the interval $[0,\Y]$
\begin{equation}
\label{graded_mesh}
  y_k = \left( \frac{k}{M}\right)^{\gamma} \Y, \quad k=0,\dots,M,
\end{equation}
where $\gamma = \gamma(\alpha) > 3/(1-\alpha)> 1$. If we denote by $h_k$ the length of the
interval
\[
  I_{k}=[y_{k},y_{k+1}] =  
    \left[\left( \frac{k}{M}\right)^{\gamma}\Y, \left( \frac{k+1}{M}\right)^{\gamma}\Y \right],
\]
then
\[
  h_k = y_{k+1} - y_{k}  \lesssim \frac{\Y}{M^{\gamma}}k^{\gamma-1}, \quad k=1,\dots, M-1.
\]
We again consider the operator $\Pi_{\T_{\Y}}$ of
\S~\ref{sub:intweighted} on the one dimensional mesh
$\T_\Y$ and wish to bound the local 
interpolation errors $E_k$ of \S~\ref{sub:quasi}.
We apply estimate \eqref{v - PivH1} to interior elements
to obtain that, for $k=2,\dots, M-1$,
\begin{equation}
\label{gm1}
  \begin{aligned}
  E_k^2 &\lesssim h_k^2\int_{\omega_{I_k}} y^{\alpha} |w_{yy}|^2 \diff y
    \lesssim \Y^2\frac{k^{2(\gamma-1)}}{M^{2\gamma}}\int_{\omega_{I_k}} y^{\alpha} |w_{yy}|^2 \diff y \\
  &\lesssim \Y^{2+\alpha-\beta} \frac{k^{2(\gamma-1)}}{M^{2\gamma}}
    \left( \frac{k}{M} \right)^{\gamma(\alpha - \beta)}
    \int_{\omega_{I_k}} y^{\beta} |w_{yy}|^2 \diff y
    \lesssim  \Y^{1-\alpha} \frac{k^{{\gamma(1-\alpha)-3}}}{M^{{\gamma(1-\alpha)}}}.
  \end{aligned}
\end{equation}
because $y^\alpha \lesssim
\left(\frac{k}{M}\right)^{\gamma(\alpha-\beta)} \Y^{\alpha-\beta} y^\beta$.
Adding \eqref{gm1} over $k = 2,\dots,M-1$, and using that
$\gamma(1-\alpha)>3$, we arrive at
\begin{equation}
\label{gm2}
  \| \partial_y(w - \Pi_{\T_\Y} w)\|^2_{L^2\left( (y_2,\Y),y^\alpha \right)}
  \lesssim \Y^{1-\alpha}M^{-2}.
\end{equation} 
For the errors $E_0^2,E_1^2$ we resort to the stability bounds
\eqref{dyPIcontinuous} and \eqref{stab-boundy} to write
\begin{equation}
\label{gm3}
  \| \partial_y (w-\Pi_{\T_\Y}w) \|^2_{L^2\left((0,y_3),y^{\alpha}\right)} 
  \lesssim \int_0^{\left(\frac{3}{M}\right)^\gamma \Y} y^{-\alpha} \diff y
  \lesssim \frac{\Y^{1-\alpha}}{M^{\gamma(1-\alpha)}},
\end{equation}
where we have used \eqref{graded_mesh}. Finally, adding 
\eqref{gm2} and \eqref{gm3} gives
\[
  \|\partial_{y}(w-\Pi_{\T_{\Y}}w) \|^2_{L^2((0,\Y),y^{\alpha})}
  \lesssim \Y^{1-\alpha} M^{-2}, 
\]
and shows that the interpolation error exhibits optimal decay rate.

We now apply this idea to the numerical solution
of problem \eqref{alpha_harmonic_extension_weak_T}.
We assume $\T_\Omega$ to be quasi-uniform in $\Tr_\Omega$ with $\# \T_\Omega \approx M^n$ and 
construct $\T_\Y \in \Tr$ as the tensor product of $\T_\Omega$ and the partition 
given in \eqref{graded_mesh}, with $\gamma > 3/(1-\alpha)$.
Consequently, $\#\T_\Y = M \cdot \# \T_\Omega \approx M^{n+1}$. 
Finally, we notice that since $\T_\Omega$ is shape regular and quasi-uniform,
$h_{\T_{\Omega}} \approx (\# \T_{\Omega})^{-1/n} \approx M^{-1}$.

\begin{theorem}[Error estimate for graded meshes]
\label{TH:error_estimates}
Let $V_{\T} \in \V(\T_\Y)$ solve \eqref{harmonic_extension_weak} and
$U_{\T_{\Omega}} \in \U(\T_{\Omega})$ be defined as in \eqref{eq:defofU}.
Then
\begin{equation}
\label{uboundVandU1}
  \| \ve - V_{\T_{\Y}} \|_{\HLn(\C,y^\alpha)} \lesssim 
  e^{-\sqrt{\lambda_1}\Y/4} \|f \|_{\Hs'} + \Y^{(1-\alpha)/2} (\# \T_{\Y})^{-1/(n+1)} \|f \|_{\Ws},
\end{equation} 
\end{theorem}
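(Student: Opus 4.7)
The plan is to decompose the error via the triangle inequality as
\[
\|\ve - V_{\T_\Y}\|_{\HLn(\C,y^\alpha)} \le \|\ve - v\|_{\HLn(\C,y^\alpha)} + \|v - V_{\T_\Y}\|_{\HLn(\C_\Y,y^\alpha)},
\]
after extending $V_{\T_\Y}$ by zero to $\C$. The first summand gives the exponential-decay term in \eqref{uboundVandU1} immediately by Theorem \ref{Thm:v-v^T}. For the second, Galerkin orthogonality in \eqref{harmonic_extension_weak} shows that $V_{\T_\Y}$ is the best $\HLn(\C_\Y,y^\alpha)$-approximation of $v$ in $\V(\T_\Y)$, and the reduction discussed after Theorem \ref{TH:upper_bound_1}, based on the cut-off $\ve_0$, then reduces matters to bounding the interpolation error $\|\nabla(\ve - \Pi_{\T_\Y}\ve)\|_{L^2(\C_\Y,y^\alpha)}$ (the extra cut-off terms have the same regularity as $\ve$ and are handled identically).

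Next I would split the squared interpolation error into a sum over cells $T=K\times I_k$ of the anisotropic graded mesh and classify them by the position of $I_k$: a boundary layer $k\in\{0,1\}$ abutting $y=0$ and an interior layer $k\ge 2$. For the boundary layer I would invoke the stability bounds \eqref{dyPIcontinuous}, \eqref{stab-boundy} (and their $x'$ counterparts \eqref{dxPIcontinuous}, \eqref{stab-boundx'}) from Theorems \ref{TH:v - PivH1} and \ref{TH:boundary_estimates}, combined with the \emph{local} regularity estimate \eqref{local_alpha} applied on the strip $\C(0,y_2)$ with $y_2\approx\Y(2/M)^\gamma$; the factor $y_2^1$ in \eqref{local_alpha} times the choice $\gamma > 3/(1-\alpha)$ gives a contribution of order $\Y^{1-\alpha}M^{-\gamma}\|f\|^2_{\Ws}\lesssim \Y^{1-\alpha}M^{-2}\|f\|^2_{\Ws}$.

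For $k\ge 2$, I would apply the anisotropic interpolation estimates \eqref{v - PivH1} and \eqref{C_2_element}. Tangential second derivatives $\partial_{x_ix_j}\ve$ and mixed $\partial_y\partial_{x_j}\ve$ are directly controlled by $\|f\|_{\Ws}$ through the global bound \eqref{reginx}, producing a summand of the desired order after multiplication by $h_{\T_\Omega}^2\approx M^{-2}$. The purely extended second derivative $\partial_{yy}\ve$ is the delicate one: it only lies in $L^2(\C,y^\beta)$ with $\beta>2\alpha+1$, so on each slab I exchange weights using $y^\alpha\le y_k^{\alpha-\beta}y^\beta$ for $y\ge y_k$ and obtain
\[
h_k^2\int_T y^\alpha|\partial_{yy}\ve|^2 \lesssim h_k^2 y_k^{\alpha-\beta}\int_T y^\beta|\partial_{yy}\ve|^2.
\]
Invoking Theorem \ref{T:local}, in particular \eqref{local_beta}, and the explicit formulas $h_k\lesssim \Y M^{-\gamma}k^{\gamma-1}$ and $y_k\approx \Y(k/M)^\gamma$, each term is bounded by $\Y^{1-\alpha}M^{-\gamma(1-\alpha)}k^{\gamma(1-\alpha)-3}\|f\|^2_{L^2(\Omega)}$, exactly as in the model computation \eqref{gm1}. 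Since $\gamma(1-\alpha) > 3$, summation in $k$ is controlled by the telescoping upper bound derived from \eqref{local_beta}, yielding an overall bound of order $\Y^{1-\alpha}M^{-2}\|f\|^2_{\Ws}$. Combining the boundary-layer and interior contributions and using $\#\T_\Y\approx M^{n+1}$ to write $M^{-1}\approx(\#\T_\Y)^{-1/(n+1)}$ produces the second summand in \eqref{uboundVandU1}.

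The main technical obstacle is the mismatch of weights for $\partial_{yy}\ve$: the interpolation estimates live in $L^2(y^\alpha)$, whereas the available regularity from Theorem \ref{TH:regularity} only places $\partial_{yy}\ve$ in $L^2(y^\beta)$ with $\beta>2\alpha+1$. Matching these two requires simultaneously (i) trading $y^\alpha$ for $y^\beta$ via the pointwise inequality $(y/y_k)^{\beta-\alpha}\ge 1$, (ii) quantifying the resulting loss through the graded-mesh scaling $h_k\approx \Y M^{-\gamma}k^{\gamma-1}$, and (iii) absorbing the blow-up of the $k$-sum into a convergent series exactly by imposing $\gamma > 3/(1-\alpha)$. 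The near-boundary layer $k\in\{0,1\}$, where Theorem \ref{TH:regularity} fails and only the local bound \eqref{local_alpha} is usable, must be handled separately via stability and is the secondary point of care.
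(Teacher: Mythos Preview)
Your overall strategy matches the paper's proof: reduce via \eqref{estimate_ve0} to the interpolation error of $\ve$, split the cells according to whether $\omega_T$ touches $\{y=0\}$, use stability near the bottom and the anisotropic estimates \eqref{v - PivH1}, \eqref{C_2_element} elsewhere, and handle $\partial_{yy}\ve$ by trading $y^\alpha$ for $y^\beta$ via the graded-mesh scaling together with \eqref{local_beta}.

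There is, however, one genuine oversight. You assert that the mixed derivative $\partial_y\partial_{x_j}\ve$ is ``directly controlled by $\|f\|_{\Ws}$ through the global bound \eqref{reginx}, producing a summand of the desired order after multiplication by $h_{\T_\Omega}^2\approx M^{-2}$.'' But inspect \eqref{v - PivH1}: for $j\le n$ the term $\|\partial_y\partial_{x_j}\ve\|_{L^2(\omega_T,y^\alpha)}$ carries the factor $h_{\vero''}=h_I$, \emph{not} $h_{\vero'}=h_K$. Hence you must estimate
\[
S_2 := \sum_{k\ge 2} h_k^2 \,\|\partial_y\nabla_{x'}\ve\|^2_{L^2(\Omega\times(y_{k-1},y_{k+2}),y^\alpha)},
\]
and the crude bound $S_2 \le (\max_k h_k)^2 \, d_s\|f\|^2_{\Ws}$ only yields $(\Y/M)^2\|f\|^2_{\Ws}$ because $\max_k h_k \approx \Y/M$. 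That gives the power $\Y$ in \eqref{uboundVandU1} instead of the stated $\Y^{(1-\alpha)/2}$. The paper closes this gap by applying the \emph{local} estimate \eqref{local_alpha} slab by slab (for $b_k\le 1$, with exponential decay beyond), which contributes an extra factor $h_k$ and leads to $S_2\lesssim \Y^{2/\gamma}M^{-2}\|f\|^2_{\Ws}\le \Y^{1-\alpha}M^{-2}\|f\|^2_{\Ws}$.

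A smaller imprecision: for the boundary layer $k\in\{0,1\}$, after applying stability you need a bound on the \emph{first} derivatives $\|\nabla\ve\|^2_{L^2(\C(0,y_2),y^\alpha)}$, whereas \eqref{local_alpha} controls second derivatives. The paper handles this by mimicking \eqref{gm3}, i.e.\ using the asymptotics $\partial_y\ve\approx y^{-\alpha}$ to get $\int_0^{y_2}y^{-\alpha}\approx y_2^{1-\alpha}\approx \Y^{1-\alpha}M^{-\gamma(1-\alpha)}$; your stated exponent $M^{-\gamma}$ is not what comes out, though either suffices since $\gamma(1-\alpha)>3>2$.
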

\begin{proof}
In light of \eqref{estimate_ve0}, with $\epsilon\approx e^{-\sqrt{\lambda_1}\Y/4}$,
it suffices to bound the interpolation
error $\ve - \Pi_{\T_{\Y}} \ve$ on the mesh $\T_\Y$.
To do so we, first of all, notice that if 
$I_1$ and $I_2$ are neighboring cells on the partition of $[0,\Y]$, 
then there is a constant $\sigma=\sigma(\gamma)$ such that
$h_{I_1}\le \sigma h_{I_2}$, whence the weak regularity condition $(c)$ holds.
We can thus apply the polynomial interpolation theory of \S~\ref{sub:intweighted}.
We decompose the mesh $\T_\Y$ into the sets
\begin{align*}
  \Tm_0 := \left\{ T \in \T_\Y: \ \omega_T \cap (\bar\Omega \times \{0\} ) 
    = \emptyset \right\}, \quad
  \Tm_1 := \left\{ T \in \T_\Y: \ \omega_T \cap (\bar\Omega \times \{0\} )
    \neq \emptyset \right\}.
\end{align*}
We observe that for all $T=K\times I_k\in\Tm_0$ we have $k\ge 2$ and
$y^\alpha\lesssim \left(\frac{k}{M}\right)^{\gamma(\alpha-\beta)}
\Y^{\alpha-\beta} y^\beta$.
Applying Theorem~\ref{TH:v - PivH1} and Theorem~\ref{TH:boundary_estimates}
to elements in $\Tm_0$ we obtain
\begin{multline*}
  \sum_{ T \in \Tm_0 } \| \nabla( \ve - \Pi_{\T_{\Y}} \ve) \|_{L^2(T,y^{\alpha})}^2 \lesssim
  \sum_{ T = K \times I \in \Tm_0 }
  \left(
    h_K^2 \| \nabla_{x'}\nabla \ve \|_{L^2(\omega_T,y^\alpha)}^2
  \right. \\
  \left.
  + h_I^2 \|\partial_y \nabla_{x'} \ve \|_{L^2(\omega_T,y^\alpha)}^2
  + h_I^2 \|\partial_{yy} \ve \|_{L^2(\omega_T,y^\beta)}^2 \right) = S_1 + S_2 + S_3.
\end{multline*}
We examine first the most problematic third term $S_3$, which we rewrite as
follows:
\[
S_3 \lesssim \sum_{k=2}^M \Y^{2+\alpha-\beta}\frac{k^{2(\gamma-1)}}{M^{2\gamma}}
    \left( \frac{k}{M} \right)^{\gamma(\alpha - \beta)} \int_{a_k}^{b_k} y^\beta 
    \int_\Omega |\partial_{yy}\ve|^2 \diff x' \diff y,
\]
with $a_k=\left(\frac{k-1}{M}\right)^\gamma \Y$ and $b_k=\left(\frac{k+1}{M}\right)^\gamma \Y$.
We now invoke the local estimate \eqref{local_beta}, as well as the fact
that $b_k-a_k \lesssim \left(\frac{k}{M}\right)^{\gamma-1} \frac{\Y}{M}$,
to end up with
\[
  S_3 \lesssim \sum_{k=2}^M
  \Y^{1-\alpha} \frac{k^{\gamma(1-\alpha)-3}}{M^{\gamma(1-\alpha)}}\|f\|_{L^2(\Omega)}^2
  \lesssim \Y^{1-\alpha} M^{-2} \|f\|_{L^2(\Omega)}^2.
\]
We now handle the middle term $S_2$ with the help of \eqref{local_alpha},
which is valid for $b_k\le1$. This imposes the restriction 
$k\le k_0\le M\Y^{-1/\gamma}$, whereas for $k>k_0$ we know that the
estimate decays exponentially. We thus have
\[
  S_2 \lesssim \|f\|_{\Ws}^2 \sum_{k=2}^{k_0}
  \left(\left(\frac{k}{M}\right)^{\gamma-1} \frac{\Y}{M} \right)^3
  \lesssim \frac{\Y^{2/\gamma}}{M^2} \|f\|_{\Ws}^2 
  \lesssim \frac{\Y^{1-\alpha}}{M^2} \|f\|_{\Ws}^2.
\]
The first term $S_1$ is easy to estimate. Since $h_K\le M^{-1}$ for all $K\in\T_\Omega$, we get
\begin{equation*}
 S_1 \lesssim M^{-2} 
  \| \nabla_{x'}\nabla v \|_{L^2(\C_\Y,y^\alpha)}^2 
  \lesssim M^{-2} \|f\|_{\Ws}^2 \lesssim \Y^{1-\alpha} M^{-2} \|f\|_{\Ws}^2.
\end{equation*}

For elements in $\Tm_1$, we rely on the stability estimates
\eqref{dxPIcontinuous}, \eqref{dyPIcontinuous}, \eqref{stab-boundx'} and \eqref{stab-boundy}
of $\Pi_{\T_\Y}$ and thus repeat the arguments used to derive 
\eqref{gm2} and \eqref{gm3}. Adding the estimates for $\Tm_0$ 
and $\Tm_1$ we obtain the assertion.
\end{proof}

\begin{remark}[Choice of $\Y$]
\label{re:ve_quasi_optimal} \rm
A natural choice of $\Y$ comes from equilibrating the two terms on the 
right-hand side of \eqref{uboundVandU1}: 
\[
 \epsilon \approx \# (\T_\Y)^{-\frac1{n+1}}
 \Leftrightarrow
 \Y \approx \log(\# (\T_\Y)).
\]
This implies the near-optimal estimate
\begin{equation}
\label{ve_quasi_optimal}
 \| \ve - V_{\T_{\Y}} \|_{\HLn(\C,y^\alpha)} \lesssim 
 |\log(\# \T_\Y )|^s \cdot (\# \T_{\Y})^{-1/(n+1)} \|f \|_{\Ws}.
\end{equation}
\end{remark}

\begin{remark}[Estimate for $u$] \rm
In view of \eqref{upper_bound_U}, we deduce the energy estimate
\[
  \| u - U_{\T_\Omega} \|_{\Hs} \lesssim 
  \left| \log(\# \T_\Y ) \right|^s\cdot (\# {\T_{\Y}})^{-1/(n+1)} \|f \|_{\Ws}.
\]
We can rewrite this estimate in terms of regularity 
$u\in\mathbb{H}^{1+s}(\Omega)$ and $\#\T_\Omega$ as
\[
  \| u - U_{\T_\Omega} \|_{\Hs} \lesssim  
  \left|\log(\# \T_\Omega) \right|^s \cdot (\# \T_{\Omega})^{-1/n} 
  \|u \|_{\mathbb{H}^{1+s}(\Omega)}.
\]
and realize that the order is near-optimal given the regularity shift
from left to right. However, our PDE approach does not allow for a
larger rate $(\#\T_\Omega)^{(2-s)/n}$ that would still be compatible 
with piecewise bilinear polynomials but not with \eqref{ve_quasi_optimal}.
\end{remark}

\begin{remark}[Computational complexity]
\rm The cost of solving the discrete problem
\eqref{harmonic_extension_weak} is related to $\#\T_\Y$, and not to
$\#\T_\Omega$, but the resulting system is sparse. The structure of 
\eqref{harmonic_extension_weak} is so that fast multilevel solvers can be
designed with complexity proportional to $\#\T_\Y$. 
On the other hand, using an integral formulation requires 
sparsification of an otherwise dense matrix with associated cost 
$(\#\T_\Omega)^2$.
\end{remark}

\begin{remark}[Fractional regularity]
\rm The function $\ve$, solution the $\alpha$-harmonic extension problem,
may also have singularities in the direction of the $x'$-variables
and thus exhibit fractional regularity. This depends
on $\Omega$ and the right hand side $f$ 
(see Remark \ref{R:compatibitity}). The characterization of
such singularities is as yet an open problem to us. The polynomial interpolation  
theory developed in \S~\ref{sub:intweighted}, however, applies to
shape-regular but graded mesh $\T_\Omega$, which can resolve such
singularities, provided we maintain the Cartesian structure of
$\T_\Y$. The corresponding a posteriori error analysis is an entirely
different but important direction currently under investigation.
\end{remark}

\section{Numerical experiments for the fractional Laplacian}
\label{sec:numerics}
To illustrate the proposed techniques here we present a couple of numerical examples.
The implementation has been carried out with the help of the \texttt{deal.II} library
(see \cite{dealii,dealii2}) which, by design, is based on tensor product elements and thus
is perfectly suitable for our needs. The main concern while developing the code was correctness
and, therefore, integrals are evaluated numerically with Gaussian quadratures of sufficiently
high order and linear systems are solved using CG with ILU preconditioner with the exit
criterion being that the $\ell^2$-norm of the residual is less than $10^{-12}$.
More efficient techniques for quadrature and preconditioning are currently under investigation. 

\subsection{A square domain}
Let $\Omega = (0,1)^2$. It is common knowledge that
\[
  \varphi_{m,n}(x_1,x_2) = \sin(m \pi x_1)\sin(n \pi x_2),
  \quad
  \lambda_{m,n} = \pi^2 \left( m^2 + n^2 \right),
  \qquad m,n \in \mathbb{N}.
\]
If $f(x_1,x_2) = ( 2\pi^2)^{s} \sin(\pi x_1)\sin(\pi x_2)$, by \eqref{def:s_in_O} we have
\[
  u(x_1,x_2) = \sin(\pi x_1)\sin(\pi x_2),
\]
and, by \eqref{exactforms},
\[
  \ve(x_1,x_2,y) = \frac{2^{1-s}}{\Gamma(s)}(2\pi^2)^{s/2}
     \sin(\pi x_1)\sin(\pi x_2) y^{s}K_s(\sqrt{2}\pi y).
\] 

We construct a sequence of  meshes $\{\T_{\Y_k} \}_{k\geq1}$,
where the triangulation of $\Omega$ is obtained by uniform refinement and the partition
of $[0,\Y_k]$ is as in \S~\ref{sub:graded}, \ie $[0,\Y_k]$ is 
divided with mesh points given by \eqref{graded_mesh} with the
election of the parameter $\gamma > 3/(1-\alpha)$.
On the basis of Theorem~\ref{Thm:v-v^T}, for each mesh the truncation parameter $\Y_k$ is chosen so that 
$\epsilon \approx (\# \T_{\Y_{k-1}})^{-1/3}$. This can be achieved, for instance, by setting
\[
  \Y_k \geq \Y_{0,k} = \frac{2}{\sqrt{\lambda_1}} ( \log C - \log \epsilon ).
\] 
With this type of meshes,
\[
  \| u - U_{\T_{\Omega,k}} \|_{\Hs} \lesssim 
  \| \ve -V_{\T_{\Y_k}} \|_{\HLn(\C,y^{\alpha})} \lesssim 
  {|\log( \# \T_{\Y_k} )|^s} \cdot (\# \T_{\Y_k})^{-1/3},
\]
which is near-optimal in $\ve$ but suboptimal in $u$, since we should expect
(see \cite{BrennerScott})
\[
  \| u - U_{\T_{\Omega,k}}  \|_{\Hs} \lesssim h_{\T_\Omega}^{2-s} \lesssim (\# \T_{\Y_k} )^{-(2-s)/3}.
\] 

\begin{figure}
\label{fig:02}
\begin{center}$
\begin{array}{cc}
\includegraphics[width=2.42in]{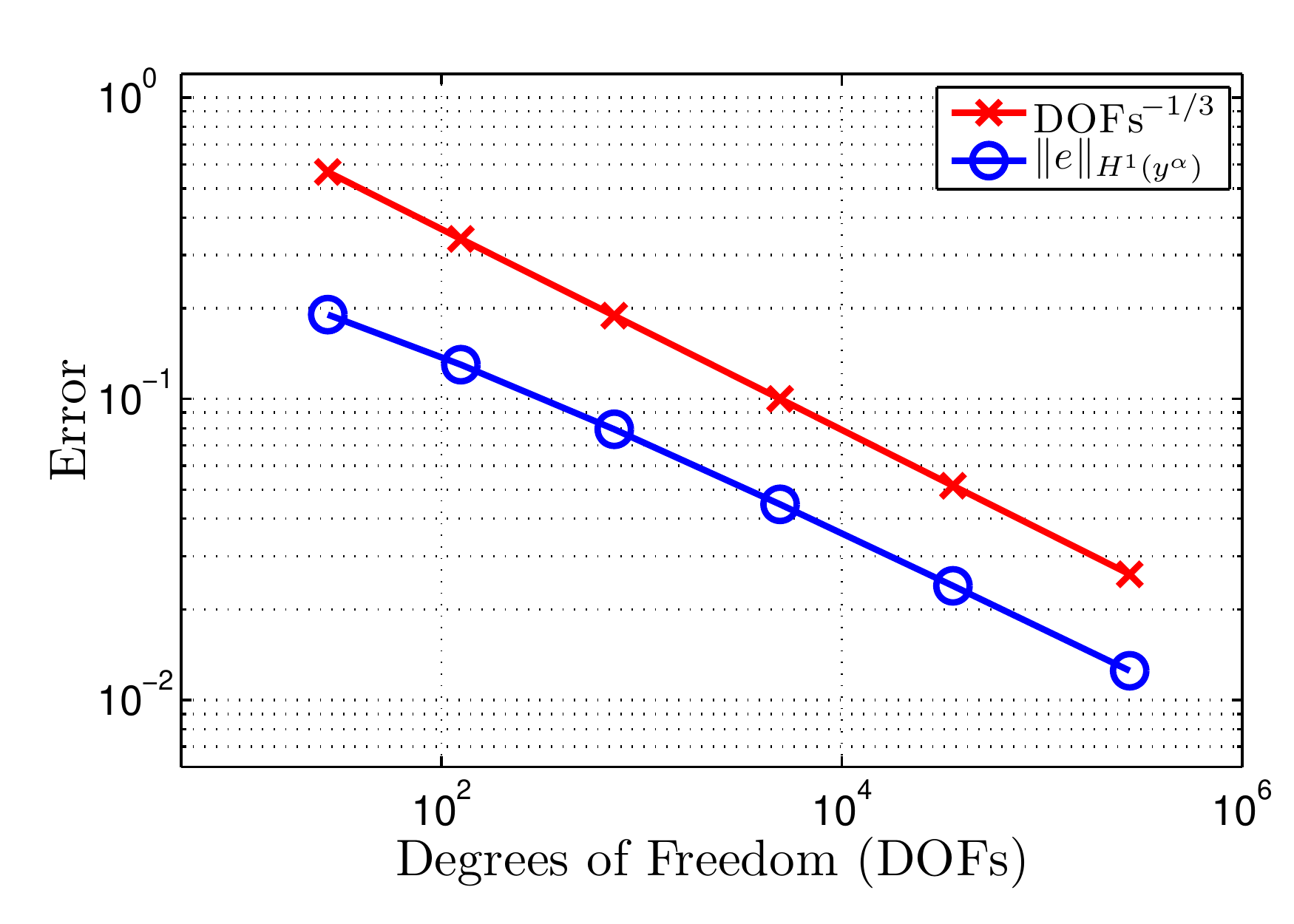} &
\includegraphics[width=2.42in]{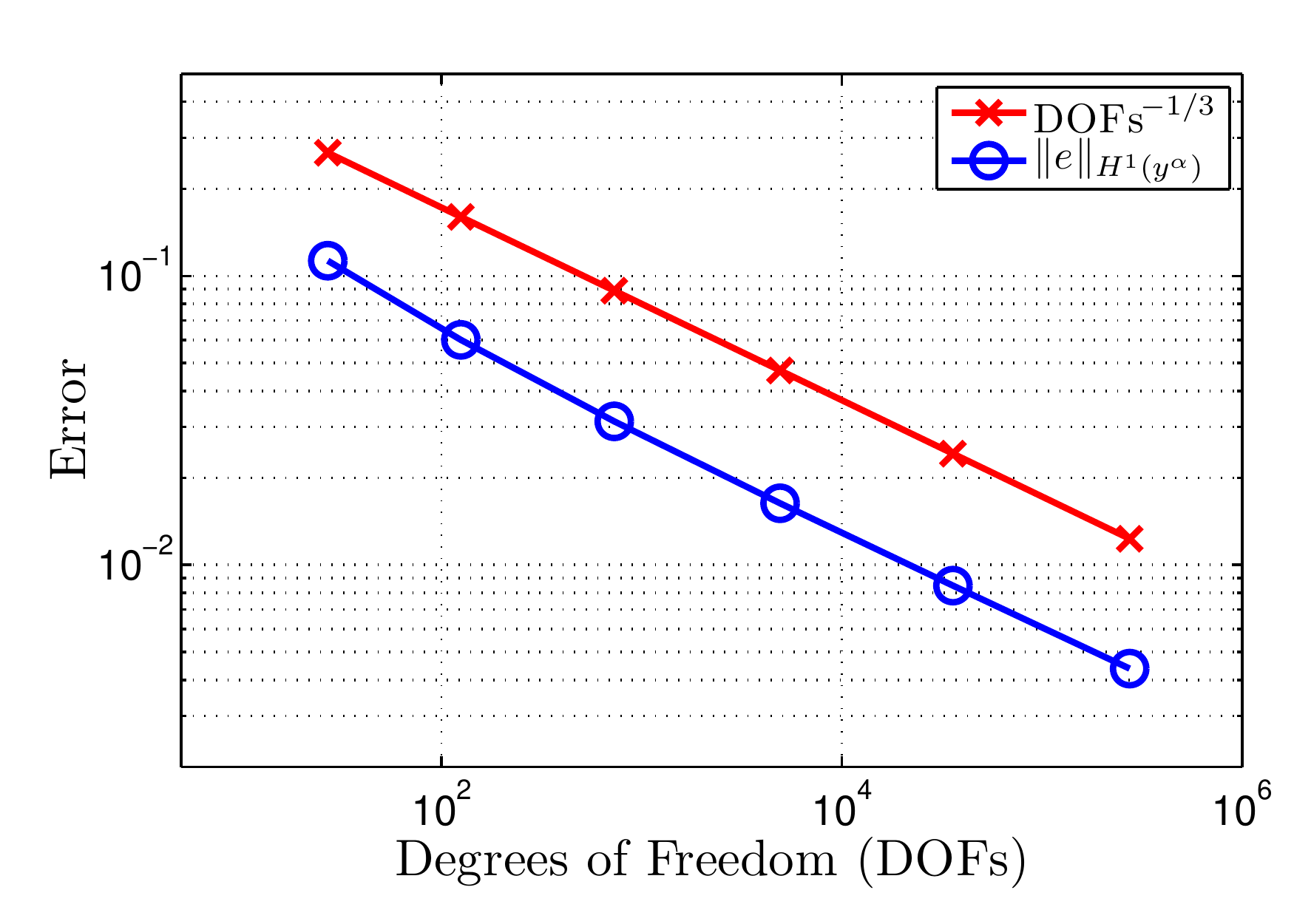}
\end{array}$
\end{center}
\caption{Computational rate of convergence for the approximate solution of the fractional Laplacian over 
a square with
graded meshes on the extended dimension. The left panel shows the rate for $s=0.2$ and the right one 
for $s=0.8$. In both cases, the rate is $\approx (\# \T_{\Y_k} )^{-1/3}$ in agreement
with Theorem \ref{TH:error_estimates} and Remark \ref{re:ve_quasi_optimal}}
\end{figure}

Figure~\ref{fig:02} shows the rates of convergence for $s=0.2$ and $s=0.8$ respectively.
In both cases, we obtain the rate given by Theorem~\ref{TH:error_estimates} and 
Remark~\ref{re:ve_quasi_optimal}.

\subsection{A circular domain}
Let $\Omega = \{ |x'| \in \R^2 : |x'|<1 \}$. Using polar coordinates it can be shown that
\begin{equation}
\label{circularphi}
 \varphi_{m,n}(r,\theta) = 
 J_{m}(j_{m,n}r)
\left( A_{m,n} \cos (m \theta) + B_{m,n} \sin(m \theta)\right), 
\end{equation}
where $J_m$ is the $m$-th Bessel function of the first kind;
$j_{m,n}$ is the $n$-th zero of $J_m$ and
$A_{m,n}$, $B_{m,n}$ are real normalization constants that ensure
$\| \varphi_{m,n} \|_{L^2(\Omega)}=1$ for all $m,n \in \mathbb{N}$.
It is also possible to show that $\lambda_{m,n}=\left(j_{m,n} \right)^2$.

If $f = ( \lambda_{1,1})^{s} \varphi_{1,1}$, then \eqref{def:s_in_O} and \eqref{exactforms} show that
$u = \varphi_{1,1}$ and
\[
  \ve(r,\theta,y) = \frac{2^{1-s}}{\Gamma(s)}(\lambda_{1,1})^{s/2}
         \varphi_{1,1}(r,\theta) y^{s}K_s(\sqrt{2}\pi y).
\]
From \cite[Chapter 9]{Abra}, we have that $j_{1,1} \approx 3.8317$.

We construct a sequence of  meshes $\{\T_{\Y_k} \}_{k\geq1}$,
where the triangulation of $\Omega$ is obtained by quasi-uniform refinement and the partition
of $[0,\Y_k]$ is as in \S~\ref{sub:graded}. The parameter 
$\Y_k$ is chosen so that $\epsilon \approx (\# \T_{\Y_{k-1}})^{-1/3}$.
With these meshes
\begin{equation}
\label{numerical_experiment_2_ve}
  \| \ve -V_{\T_{\Y_k}} \|_{\HLn(\C,y^{\alpha})} \lesssim 
  |\log(\# \T_{\Y_k})|^s \cdot (\# \T_{\Y_k})^{-1/3},
\end{equation}
which is near-optimal. 

Figure~\ref{fig:03} shows the errors of $\|\ve - V_{\T_{k,\Y}} \|_{H^1(y^{\alpha},\C_{\Y_{k}})}$
for $s = 0.3$ and $s = 0.7$. The results, again, are in agreement with
Theorem~\ref{TH:error_estimates} and Remark~\ref{re:ve_quasi_optimal}.

\begin{figure}
\label{fig:03}
\begin{center}$
\begin{array}{cc}
\includegraphics[width=2.42in]{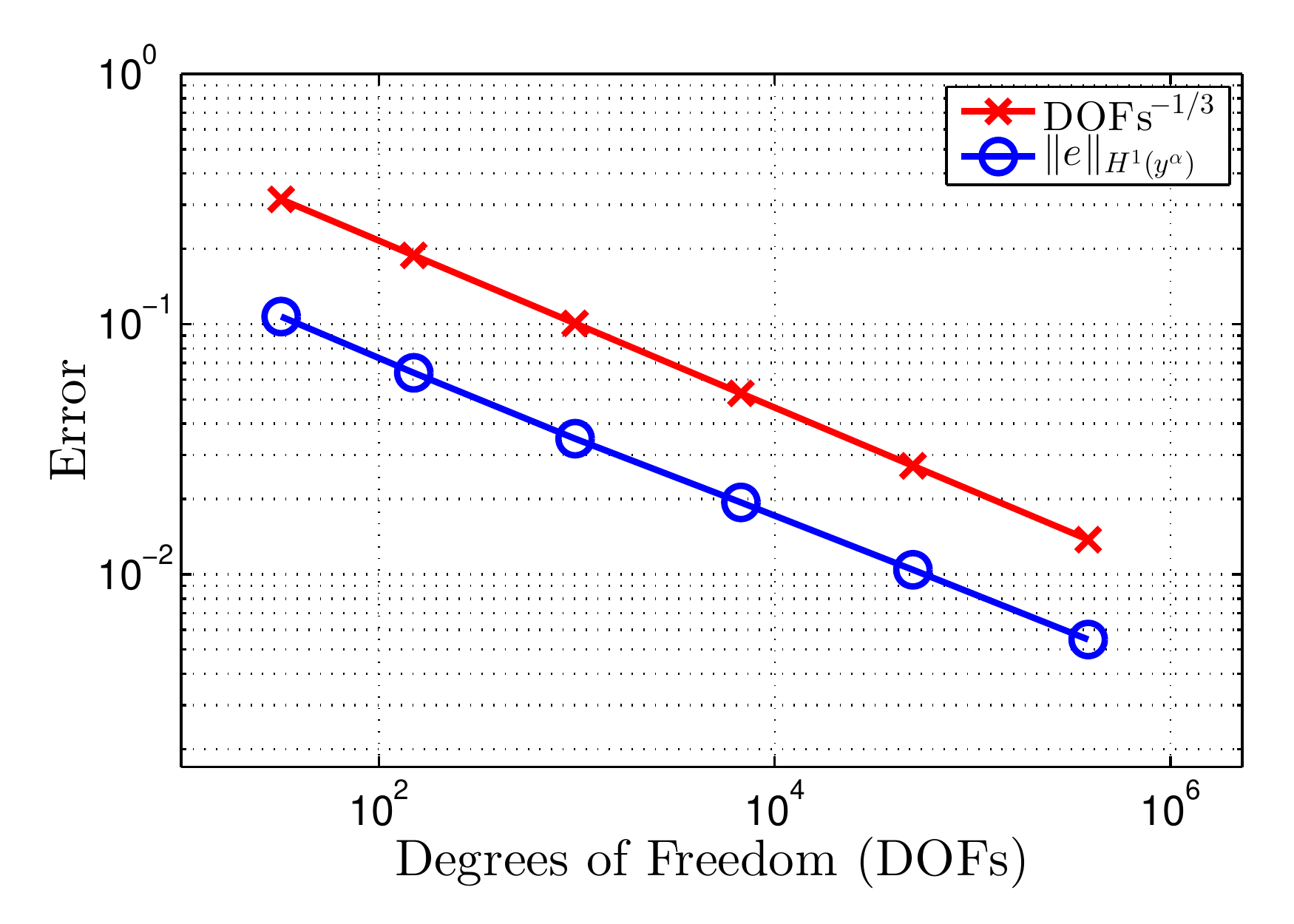} &
\includegraphics[width=2.42in]{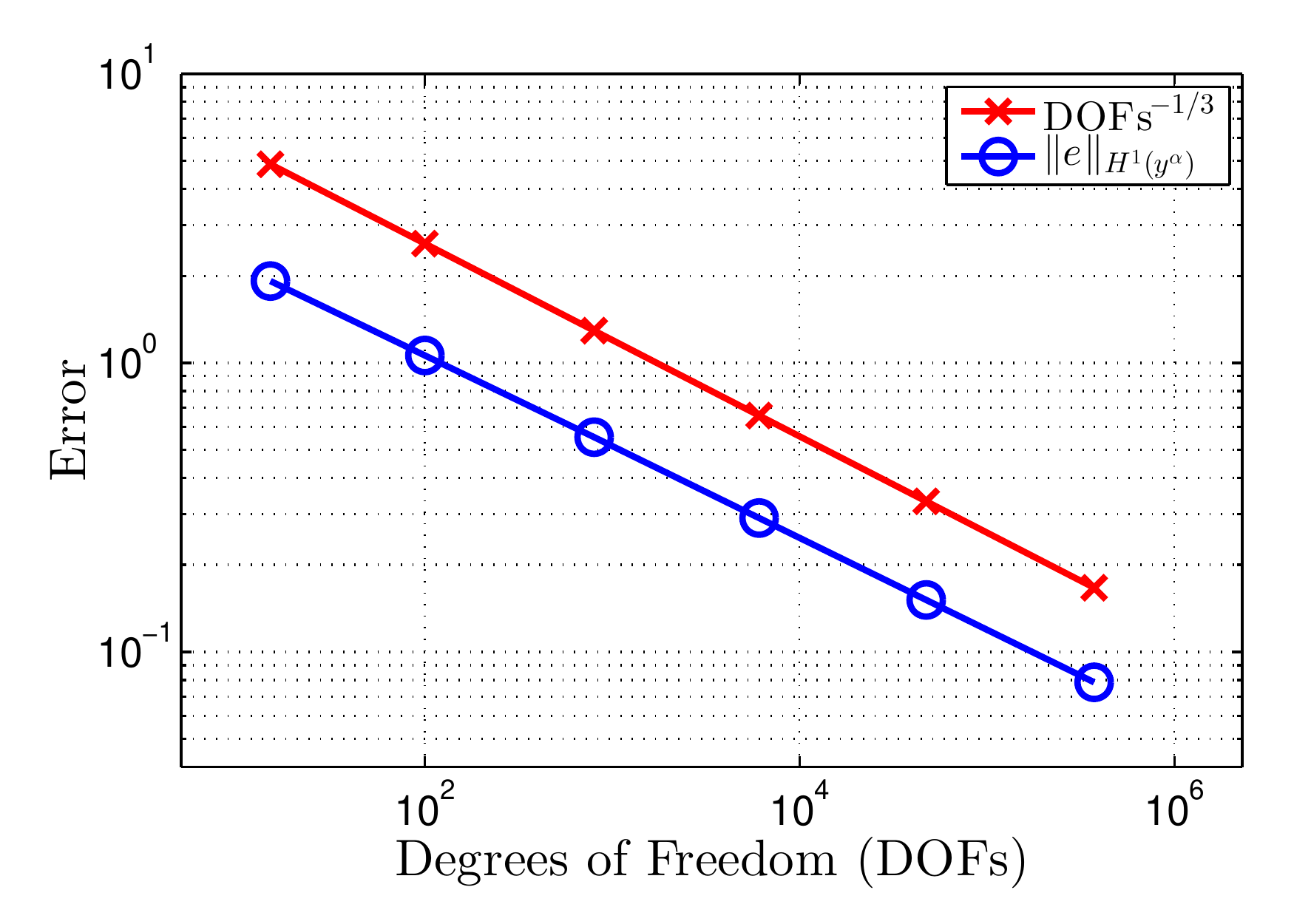}
\end{array}$
\end{center}
\caption{Computational rate of convergence for the approximate solution of the fractional Laplacian 
over a circle with
graded meshes on the extended dimension. The left panel shows the rate for $s=0.3$ and the right one 
for $s=0.7$. In both cases, the rate is $\approx (\# \T_{\Y_k} )^{-1/3}$ in agreement
with Theorem \ref{TH:error_estimates} and Remark \ref{re:ve_quasi_optimal}}
\end{figure}

\section{Fractional powers of general second order elliptic operators}
\label{sec:general}
Let us now discuss how the methodology developed in previous sections extends to a general
second order, symmetric and uniformly elliptic operator. 
This is an important property of our PDE approach. 
Recall that, in \S~\ref{sub:CaffarelliSilvestre}, we discussed how
the fractional Laplace operator can be realized as a Dirichlet to Neumann map via an extension
problem posed on the semi-infinite cylinder $\C$. 
In the work of Stinga and Torrea \cite{ST:10}, the same type of characterization
has been developed for the fractional powers of second order elliptic operators.

Let $\mathcal{L}$ be a second order symmetric differential operator of the form
\begin{equation}
\label{second_order}
 \mathcal{L} w = - \DIV_{x'} (A \nabla_{x'} w ) + c w,
\end{equation}
where $c \in L^\infty(\Omega)$ with $c\geq 0$ almost everywhere,
$A \in \C^{0,1}(\Omega,\GL(n,\R))$ is symmetric and positive definite,
and $\Omega$ is Lipschitz.
Given $f \in L^2(\Omega)$, the Lax-Milgram lemma shows that there is a unique $w \in H^1_0(\Omega)$
that solves
\[
 \mathcal{L} w = f \text{ in } \Omega, \qquad w = 0 \text{ on } \partial \Omega.
\]
In addition, if $\Omega$ has a $\C^{1,1}$ boundary, \cite[Theorem 2.4.2.6]{Grisvard} 
shows that $w\in H^2(\Omega) \cap H_0^1(\Omega)$.
Since $\mathcal{L}^{-1}: L^2(\Omega)\to L^2(\Omega)$ is compact and symmetric,
its spectrum 
is discrete, positive and accumulates at zero. Moreover, there exists
$\{ \lambda_k,\varphi_k \}_{k\in \mathbb N} \subset \R_+\times H^1_0(\Omega)$
such that $\{\varphi_k \}_{k\in \mathbb N}$ is an orthonormal basis of $L^2(\Omega)$ and
for, $k\in\mathbb N$,
\begin{equation}
  \label{eigenvalue_problem_L}
    \mathcal{L} \varphi_k = \lambda_k \varphi_k  \text{ in } \Omega,
    \qquad
    \varphi_k = 0 \text{ on } \partial\Omega,
\end{equation}
and $\lambda_k\to \infty$ as $k\to\infty$.
For $u \in C_0^{\infty}(\Omega)$ we then define the fractional powers of $\mathcal{L}$ as
\begin{equation}
  \label{def:second_frac}
  \mathcal{L}^s u = \sum_{k=1}^\infty u_k \lambda_k^{s}\varphi_k,
\end{equation} 
where $u_k = \int_{\Omega} u \varphi_k $. By density the operator $\mathcal{L}^s$ can be extended 
again to $\Hs$.
This discussion shows that it is legitimate to study the following problem: given
$s\in (0,1)$ and $f \in \Hs'$, find $u \in \Hs$ such that
\begin{equation}
\label{gl=f_bdddom}
  \mathcal{L}^s u= f \text{ in } \Omega.
\end{equation}

To realize the operator $\mathcal{L}^s$ as the Dirichlet to Neumann map of an extension problem 
we use the generalization of the result by Caffarelli and Silvestre presented in \cite{ST:10}.
We seek a function $\ve: \C \rightarrow \R$ that solves
\begin{equation}
\label{alpha_harm_L}
\begin{dcases}
  -\mathcal{L}\ve + \frac{\alpha}{y}\partial_{y}\ve + \partial_{yy}\ve = 0, & \text{in } \C, \\
  \ve = 0, & \text{on } \partial_L \C, \\
  \frac{ \partial \ve }{\partial \nu^\alpha} = d_s f, & \text{on } \Omega \times \{0\}, \\
\end{dcases}
\end{equation}
where the constant $d_s$ is as in \eqref{d_s}. In complete analogy to \S~\ref{sub:CaffarelliSilvestre} it
is possible to show that
\[
  d_s \mathcal{L}^s u = \frac{\partial \ve}{\partial \nu^{\alpha}}: \Hs \longmapsto \Hs'.
\]
Notice that the differential operator in \eqref{alpha_harm_L} is
\[
  \DIV \left( y^{\alpha} \mathbf{A} \nabla \ve \right) + y^{\alpha} c\ve,
\]
where, for all $x \in \C $, $\mathbf{A}(x) = \diag\{A(x'),1\} \in \GL(n+1,\R)$.

It suffices now to notice that both $y^\alpha c$ and $y^\alpha \mathbf{A}$ are in $A_2(\R^{n+1}_+)$,
to conclude that, given $f \in \Hs'$, there is a unique $\ve \in \HL(\C,y^{\alpha})$
that solves \eqref{alpha_harm_L}, \cite{FKS:82}. In addition, $u = \ve(\cdot,0) \in \Hs$ solves
\eqref{gl=f_bdddom} and we have the stability estimate
\begin{equation}
\label{estimate_s_L}
  \| u \|_{\Hs} \lesssim \| \nabla \ve \|_{L^2(\C,y^{\alpha})} \lesssim \| f\|_{\Hs'},
\end{equation}
where the hidden constants depend on $A$, $c$, $C_{2,y^\alpha}$ and $\Omega$.

The representation \eqref{exactforms} of $\ve$ in terms of the Bessel functions
is still valid. We can thus repeat the arguments in the proof of Theorem~\ref{Thm:v-v^T} to conclude that
\[
 \|\nabla \ve\|_{L^2(\Omega \times (\Y,\infty),y^{\alpha})} \lesssim e^{-\sqrt{\lambda_1} \Y/2}
  \| f\|_{\Hs'},
\]
and introduce $v\in \HL(\C_\Y,y^{\alpha})$ --- solution of a truncated version of
\eqref{alpha_harm_L} --- and show that
\begin{equation}
\label{le:v-v^TCL}
  \| \nabla(\ve - v) \|_{L^2(\C, y^{\alpha})} \lesssim e^{-\sqrt{\lambda_1} \Y/4} \| f\|_{\Hs'}.
\end{equation}

Next, we define the finite element approximation of the solution to \eqref{alpha_harm_L} as
the unique function $V_{\T_{\Y}} \in \V(\T_{\Y})$ that solves
\begin{equation}
\label{harmonic_extension_weak_L}
  \int_{\C_\Y} {y^{\alpha}\mathbf{A}(x)} \nabla V_{\T_{\Y}}\cdot \nabla W 
 + y^{\alpha} c(x') V_{\T_{\Y}} W \diff x' \diff y
 = d_s \langle f, \textrm{tr}_{\Omega} W \rangle,
  \quad \forall W \in \V(\T_{\Y}).
\end{equation}
We construct, as in \S~\ref{sub:graded}, a shape regular triangulation $\T_\Omega$ of $\Omega$,
which we extend to $\T_\Y \in \Tr$ with the partition given in \eqref{graded_mesh}, with
$\gamma > 3/(1-\alpha)$. Following the proof of Theorem~\ref{TH:error_estimates} we can also show
the following error estimate.

\begin{theorem}[Error estimate for general operators]
Let $V_{\T} \in \V(\T_\Y)$ be the solution of \eqref{harmonic_extension_weak_L} and
$U_{\T_{\Omega}} \in \U(\T_{\Omega})$ be defined as in \eqref{eq:defofU}.
If $\ve$, solution of \eqref{alpha_harm_L}, is such that
$\mathcal{L} \ve,\ \partial_y \nabla \ve \in H^2(y^\alpha, \C)$,
then we have
\[
  \| u - U_{\T_\Omega} \|_{\Hs} \lesssim
  \| \ve - V_{\T_{\Y}} \|_{\HLn(\C,y^\alpha)} \lesssim 
  |\log(\# \T_{\Y}) |^s (\# \T_{\Y})^{-1/(n+1)} \|f \|_{\Ws}.
\]
\end{theorem}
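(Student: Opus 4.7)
The plan is to mimic the argument used for the fractional Laplacian in Theorem~\ref{TH:error_estimates}, emphasizing which ingredients transfer to $\mathcal{L}$ unchanged and which must be revisited. The first inequality $\| u - U_{\T_\Omega}\|_{\Hs} \lesssim \| \ve - V_{\T_\Y} \|_{\HLn(\C,y^\alpha)}$ is immediate from the trace characterization in Proposition~\ref{H=LM=TR}, which is a purely functional analytic statement and has nothing to do with the operator. Thus attention focuses on the energy estimate on the cylinder.

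First I would truncate and reduce to interpolation. Let $v\in\HL(\C_\Y,y^\alpha)$ be the solution of the truncated analog of \eqref{alpha_harm_L} and invoke \eqref{le:v-v^TCL}, so that $\|\nabla(\ve-v)\|_{L^2(\C,y^\alpha)} \lesssim e^{-\sqrt{\lambda_1}\Y/4}\|f\|_{\Hs'}$. Since $\mathbf{A}$ is uniformly positive definite and $c\ge 0$, the bilinear form associated with \eqref{harmonic_extension_weak_L} is coercive and continuous on $\HL(\C_\Y,y^\alpha)$ (with constants depending on $A$, $c$ and $\Omega$, via the weighted Poincar\'e inequality \eqref{Poincare_ineq}). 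C\'ea's lemma then yields the quasi-best approximation property $\|v-V_{\T_\Y}\|_{\HLn(\C_\Y,y^\alpha)} \lesssim \inf_{W\in\V(\T_\Y)}\|v-W\|_{\HLn(\C_\Y,y^\alpha)}$. As in the discussion leading to \eqref{estimate_ve0}, I would replace $v$ by the cut-off $\ve_0=\rho\ve$, pay the exponentially small price $\epsilon\|f\|_{\Hs'}$ for the cut-off, and then take $W=\Pi_{\T_\Y}\ve_0$ (equivalently, interpolate $\ve$ itself, dropping the Dirichlet condition at $y=\Y$).

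Next I would carry out the graded-mesh interpolation analysis verbatim as in the proof of Theorem~\ref{TH:error_estimates}. Split $\T_\Y=\Tm_0\cup\Tm_1$ according to whether $\omega_T$ touches $\bar\Omega\times\{0\}$. On $\Tm_0$ elements $T=K\times I_k$ with $k\ge 2$, apply the interior interpolation bound \eqref{v - PivH1} of Theorem~\ref{TH:v - PivH1} (and \eqref{C_2_element} of Theorem~\ref{TH:boundary_estimates} on lateral elements), then exploit the scaling $y^\alpha \lesssim (k/M)^{\gamma(\alpha-\beta)}\Y^{\alpha-\beta}y^\beta$ and the graded mesh sizes $h_{I_k}\lesssim\Y M^{-\gamma}k^{\gamma-1}$ from \eqref{graded_mesh} with $\gamma>3/(1-\alpha)$; this converts the three terms $\|\mathcal{L}\ve\|_{L^2(\omega_T,y^\alpha)}^2$, $\|\partial_y\nabla_{x'}\ve\|_{L^2(\omega_T,y^\alpha)}^2$ and $\|\ve_{yy}\|_{L^2(\omega_T,y^\beta)}^2$ into a sum telescoping to $\Y^{1-\alpha}M^{-2}\|f\|_{\Ws}^2$. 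On $\Tm_1$ I would simply use the stability bounds \eqref{stab-boundx'}--\eqref{stab-boundy} together with the $y^{-\alpha}$ integrability near $y=0$, exactly as in \eqref{gm3}. Finally, choosing $\Y\approx\log(\#\T_\Y)$ balances truncation and interpolation errors and gives the claimed logarithmic factor.

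The principal obstacle is the global weighted $H^2$-type regularity that plays the role of Theorem~\ref{TH:regularity} for the general operator $\mathcal{L}$. In the Laplacian case the estimates $\|\Delta_{x'}\ve\|_{L^2(\C,y^\alpha)}+\|\partial_y\nabla_{x'}\ve\|_{L^2(\C,y^\alpha)}\lesssim\|f\|_{\Ws}$ and $\|\ve_{yy}\|_{L^2(\C,y^\beta)}\lesssim\|f\|_{L^2(\Omega)}$ followed from the explicit Bessel representation \eqref{exactforms} combined with the eigenvalue expansion and the identity \eqref{besselenergy}. For $\mathcal{L}$, the same eigenfunction expansion \eqref{eigenvalue_problem_L} and the Bessel structure in $y$ apply, so the $y$-direction estimate on $\|\ve_{yy}\|_{L^2(\C,y^\beta)}$ and the Bessel identity go through verbatim; the $\mathcal{L}$-regularity in $x'$, however, reduces to an elliptic-regularity statement for the eigenfunctions $\varphi_k$, which requires smoothness of the coefficients $A,c$ and of $\partial\Omega$ (Grisvard-type results for $\mathcal{L}$ on $\Omega$), and is the reason the theorem is stated \emph{under the assumption} $\mathcal{L}\ve,\partial_y\nabla\ve\in L^2(\C,y^\alpha)$ (the $H^2$ in the statement being a typo). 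Granted this assumption, the error estimate is assembled exactly as above.
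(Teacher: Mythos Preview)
Your proposal is correct and follows exactly the approach the paper intends: the paper's own ``proof'' is the single sentence ``Following the proof of Theorem~\ref{TH:error_estimates} we can also show the following error estimate,'' and you have faithfully unpacked that, including the truncation step \eqref{le:v-v^TCL}, C\'ea's lemma for the coercive bilinear form with coefficients $\mathbf{A}$ and $c$, the cut-off reduction \eqref{estimate_ve0}, the $\Tm_0/\Tm_1$ split, and the balancing choice of $\Y$. Your observation that the hypothesis should read $L^2(\C,y^\alpha)$ rather than $H^2(y^\alpha,\C)$ is well taken, and your remark that passing from $\mathcal{L}\ve\in L^2(\C,y^\alpha)$ to the full second-order derivatives $\nabla_{x'}^2\ve$ needed in \eqref{v - PivH1} requires slice-wise elliptic regularity (hence the smoothness assumptions on $A$, $c$, and $\partial\Omega$ stated earlier in \S\ref{sec:general}) is a point the paper leaves implicit.
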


\bibliographystyle{siam}
\bibliography{biblio}

\end{document}